\tikzset{fontscale/.style = {font=\relsize{#1}}}
\definecolor{ivory}{RGB}{218,215,203}
\definecolor{cuhkp}{RGB}{98,56,105} 	
\definecolor{cuhkpl}{RGB}{152,24,147} 	
\definecolor{cuhkb}{RGB}{219,160,1} 	
\definecolor{cuhkbd}{RGB}{178,129,0} 	
\definecolor{cuhkr}{RGB}{88,35,155}  	
\definecolor{oranl}{RGB}{223,149,86}
\definecolor{turql}{RGB}{53,130,134}  		
\definecolor{blackp}{RGB}{0,0,0} 
\definecolor{redp}{RGB}{255,0,0}
\definecolor{orangep}{RGB}{255,128,0}
\definecolor{brownp}{RGB}{128,77,0}
\definecolor{yellowp}{RGB}{255,230,0}
\definecolor{greenp}{RGB}{128,230,0}
\definecolor{bluep}{RGB}{0,128,255}
\definecolor{purplep}{RGB}{152,24,147}
\definecolor{pinkp}{RGB}{230,0,128}       
\definecolor{amg}{RGB}{17,140,17}
\newtheorem{thm}{theorem}[section]
\newtheorem{fact}[thm]{Fact}
\newtheorem{lem}[thm]{Lemma}
\newtheorem{prop}[thm]{Proposition}
\newtheorem{defn}[thm]{Definition}
\newtheorem{example}[thm]{Example}
\DeclareMathOperator*{\argmin}{argmin}
 \newcommand\conv{\mathrm{conv}}
\newcommand{\RNum}[1]{\uppercase\expandafter{\romannumeral #1\relax}}
\newcommand{\rnum}[1]{\expandafter{\romannumeral #1\relax}}
\newcommand\cM{\mathcal{M}}
\newcommand\cA{\mathcal{A}}
\newcommand\cC{\mathcal{C}}
\newcommand\cR{\mathcal{R}}
\newcommand\cS{\mathcal{S}}
\newcommand\cH{\mathcal{H}}
\newcommand\parr{\mathrm{par}}
\newcommand\epii{\mathrm{epi}}
\newcommand\epi{\mathrm{epi}}
\newcommand\cN{\mathcal{N}}
\newcommand\tr{\mathrm{tr}}
\newcommand\cl{\mathrm{cl}}
\newcommand\cL{\mathcal{L}}
\newcommand\cB{\mathcal{B}}
\newcommand\veps{\varepsilon}
\newcommand\xopt{\bar x}
\newcommand\cK{\mathcal{K}}
\newcommand\bS{\mathbb{S}}
\newcommand\ri{\mathrm{ri}}
\newcommand\rbd{\mathrm{rbd}}
\newcommand\dist{\mathrm{dist}}
\newcommand\dom{\mathrm{dom}}
\newcommand\lin{\mathrm{lin}}
\newcommand\rd{\mathrm{d}}
\newcommand{\elimit}{\xrightarrow{e}}
\newcommand\aff{\mathrm{aff}}
\newcommand\B{\mathbb{B}}
\def\d{\mathrm{d}}
\newcommand\proxs{\mathrm{prox}_{\tau\varphi}}
\newcommand\R{\mathbb{R}}
\newcommand\Rn{\mathbb{R}^n}
\newcommand\Rm{\mathbb{R}^m}
\newcommand\n{\mathbb{N}}
\newcommand\Rex{(-\infty,\infty]}
\newcommand\Rexx{[-\infty,\infty]}
\newcommand{\iprod}[2]{\langle #1, #2 \rangle}
\newcommand\vp{\varphi}
\newcommand\half{\frac12}
\newcommand\gph{\mathrm{gph}}
\newcommand\envs{\mathrm{env}_{\tau\varphi}}
\newcommand{\be}{\begin{equation}}
\newcommand{\ee}{\end{equation}}
\newcommand{\comnew}[1]{\textcolor{turql}{#1}}
\newcommand{\Rmnum}[1]{\expandafter\@slowromancap\romannumeral #1@}
\newcommand\pvpd{\mathcal{Q}}
\newcommand\usotp{\mathcal{UTP}}
\newcommand\propN{\mathcal{N}}
\newcommand{\tq}[1]{\textcolor{turql}{#1}}
\crefname{section}{section}{sections}
\crefname{subsection}{subsection}{subsections}
\Crefname{figure}{Figure}{Figures}
\crefname{hypothesis}{Hypothesis}{Hypotheses}
\title{Variational Properties of Decomposable Functions. Part I: Strict Epi-Calculus and Applications\thanks{Submitted to the editors DATE.
\funding{Andre Milzarek was partly supported 
by the Shenzhen Science and Technology Program under Grant No. RCYX20221008093033010 and by the Guangdong Key Lab of Mathematical Foundations for Artificial Intelligence (2023B1212010001).}}}
\author{Wenqing Ouyang\thanks{School of Data Science, The Chinese University of Hong Kong, Shenzhen (CUHK-Shenzhen), Guangdong, 518172, P.R. China (\email{wenqingouyang1@link.cuhk.edu.cn} and \email{andremilzarek@cuhk.edu.cn}).}
  \and Andre Milzarek\footnotemark[2]
 }
\DeclareMathOperator{\diag}{diag}
\begin{document}

\maketitle

\begin{abstract}
This work provides a systematic study of the variational properties of decomposable functions which are compositions of an outer support function and an inner smooth mapping under certain constraint qualifications. A particular focus is put on the strict twice epi-differentiability and the associated strict second subderivative of such functions. A lower bound for the strict second subderivative of decomposable functions is derived which allows linking the strict second subderivative of decomposable mappings to the simpler outer support function. Leveraging the variational properties of the support function, we establish the equivalence between the strict twice epi-differentiability of decomposable functions, continuous differentiability of the proximity operator, and the strict complementarity condition. As an application, this allows us to fully characterize the strict saddle point property of decomposable functions. In addition, an explicit formula for the strict second subderivative of decomposable functions is derived if the outer support set is sufficiently regular. This yields an alternative characterization of the strong metric regularity of the subdifferential of decomposable functions at a local minimizer. Finally, we verify that the introduced regularity conditions are satisfied by many practical functions and applications.  
\end{abstract}

\begin{keywords}
  Decomposable functions, strict second subderivative, proximity operator, strong metric regularity
\end{keywords}

\begin{AMS}
  90C30, 65K05, 90C06, 90C53
\end{AMS}


\section{Introduction and Background}
Composite models provide favorable structural properties that can be leveraged in second-order variational analysis. In such models, the function of interest, $\vp:\R^n\to\Rex$, is assumed to be a local composition of two mappings $\vp_d$ and $F$ around some point $\bar x \in \Rn$:
\begin{equation}
  \label{composition}
  \vp(x)=\vp_d(F(x)) \quad \forall~x\in U. 
\end{equation} 
Here, $U$ is a neighborhood of $\bar x$, the outer function $\vp_d$ is typically lower semicontinuous (lsc) and relatively simple, the inner mapping $F$ is assumed to be sufficiently smooth, and an appropriate constraint qualification is supposed to hold. The advantages of such a composite structure are twofold. Theoretically, the more intricate nonsmoothness of $\vp$ can be transferred to the often simpler function $\vp_d$ facilitating the analysis of its variational properties, \cite{mohammadi2020variational,mohammadi2020twice,mohammadi2022variational}. Practically, such a structure can be utilized to design fast and robust algorithms including, e.g.,  bundle and level methods \cite{sagastizabal2013composite,lan2011level} and prox-linear-type methods \cite{Bur85,LewWri16,DruPaq19}. 

Among these models, perhaps the most well-known one is the concept of fully amenable functions \cite{rockafellar1988first}, where the outer function $\vp_d$ is assumed to be piecewise linear-quadratic. Full calculus for the second subderivative and twice epi-differentiability have been established for fully amenable functions. However, since the subdifferential of a fully amenable function must be a polyhedral set  (cf. \cite[Exercise 10.25(a)]{rockafellar2009variational}), this model does not cover various important applications such as second-order cone and semidefinite programming or nuclear norm minimization. 

In this work, we focus on the class of decomposable functions proposed by Shapiro in \cite{shapiro2003class}, where $\vp_d$ is assumed to be sublinear. As pointed out in \cite{shapiro2003class}, decomposable mappings belong to the category of strongly amenable functions \cite{rockafellar2009variational}, but are not necessarily fully amenable. Decomposable functions are also partly smooth in the sense of Lewis \cite{lewis2002active} provided that the constraint nondegeneracy condition holds, see \cite{shapiro2003class,hare2006functions}. The second-order variational properties of decomposable functions are analyzed in detail in the second author's thesis \cite{milzarek2016numerical}, where  the class of decomposable functions is further divided into decomposable and fully decomposable functions depending on the underlying constraint qualification. Moreover, twice epi-differentiability of $C^2$-fully decomposable functions is established in \cite{milzarek2016numerical}. This result was then recovered in \cite{mohammadi2020twice} through a more general analysis framework that is based on parabolic regularity.   

The goal of this paper is to study the strict twice epi-differentiability and the strict second subderivative of fully decomposable functions. Strict twice epi-differentiability was proposed in \cite{PolRoc96} and was discussed for nonlinear programming and minimax problems in \cite{rockafellar1995second}. Recently, a systematic analysis of the strict twice epi-differentiability of composite functions is conducted in \cite{hang2024chain} for a subclass of fully decomposable functions, where the mapping $\vp_d$ is assumed to be polyhedral. In particular, strict twice epi-differentiability is shown to be equivalent to the strict complementarity condition for such composite functions and an explicit formula for the strict second subderivative is established. We are interested in generalizing these results to decomposable functions, as this class allows covering a broader family of conic problems that do not necessarily have a polyhedral structure -- as required in \cite{hang2024chain}. 

\subsection{Contributions}
In the following, we summarize our main contributions:
\begin{itemize}
  \item  We provide a lower bound for the strict second subderivative of $C^2$-strictly decomposable functions and show that equality in the bound can be  attained along directions in the affine hull of the critical cone. This serves as a strict variant of the calculus rule in \cite[Theorem 4.1 and Corollary 4.3]{benko2022second} and is the theoretical basis of this paper.
  \item Using the obtained chain rule for the strict second subderivative, we establish the equivalence between the strict twice epi-differentiability, the strict complementarity condition, and the continuous differentiability of the proximity operator for $C^2$-strictly decomposable functions\footnote{In parallel and independently, the work \cite{hang2023smoothness} established a similar result for $C^2$-fully decomposable functions, which requires a more restrictive constraint qualification. In addition, in \cite{hang2023smoothness}, decomposability is required to hold on a neighborhood of the point of interest. Such condition is not needed in our analysis; see \Cref{sec:charc-strict} for further discussion.}. As a direct application, this allows us to fully characterize the notion of active strict saddle points, \cite{DavDru22, davis2022escaping}, for strictly decomposable functions.  
  \item We develop a new regularity condition for $\vp_d$ -- the $\usotp$-property -- that allows calculating the strict second subderivative. To the best of our knowledge, computational results for the strict second subderivative are rare and fundamental advances have only been obtained recently in \cite{hang2024chain}. Our analysis goes beyond the results in \cite{hang2024chain} and is applicable to a vast class of conic-type composite problems. Using the obtained representation of the strict second subderivative, we provide full characterizations of the strong metric regularity of the subdifferential $\partial\vp$ when the $\usotp$-property holds. Finally, in \Cref{sec:exam}, we demonstrate applicability of our techniques for various examples, including slices of the second-order cone, matrix intervals, and the Ky Fan $k$-norm. 
\end{itemize}
 
\subsection{Organization}
This paper is organized as follows. In \Cref{sec:pre}, we introduce basic variational tools, fix the notation, and several useful proximal properties. In \Cref{sec:dec}, we provide a formal definition of decomposability and discuss various first- and second-order variational properties of decomposable functions. In particular, we derive chain rules for the standard and strict second subderivative and establish characterizations of (strict) twice epi-differentiability. These results are then utilized to verify the equivalence between the continuous differentiability of the proximity operator and the strict twice epi-differentiability for $C^2$-strictly decomposable functions. In \Cref{sec:uniform-d}, we define the $\usotp$-property which allows us to derive the strict second subderivative in explicit form. Finally, in \Cref{sec:exam}, we present several illustrating examples and applications.

\section{Preliminaries and Variational Tools} \label{sec:pre}
 
\subsection{Notation and Variational Tools}
\textit{General Notation.} Our terminologies and notations follow the standard textbooks \cite{rockafellar1970convex,bonnans2013perturbation,rockafellar2009variational}. By $\iprod{\cdot}{\cdot}$ and $\|\cdot\|$, we denote the standard Euclidean inner product and norm. For matrices, the norm $\|\cdot \|$ is the spectral norm. The sets of symmetric and symmetric positive definite $n \times n$ matrices are denoted by $\mathbb S^n$ and $\mathbb{S}_{++}^n$, respectively. For two matrices $A, B \in \mathbb S^n$, we write $A\succeq B$ if $A-B$ is positive semidefinite. 

\textit{Sets.} We utilize the standard Painlev\'{e}-Kuratowski convergence for sets. Specifically, for a sequence $\{C_k\}_k \subseteq \R^n$, the outer and inner limit are defined via 
\[ {\limsup}_{k\to\infty} C_k := \{x\in\R^n: \exists~ \{j_k\}_k \subseteq \n,~x_{j_k}\in C_{j_k},~x_{j_k}\to x\} \]
and $\liminf_{k\to\infty} C_k:=\{x\in\R^n: \exists~x_{k}\in C_{k},~x_{k}\to x\}$. 
We write $C_k\to C$ when both limits coincides. 
For any closed set $C \subseteq \Rn$, we use $\Pi_C : \Rn \rightrightarrows \Rn$ to denote the projection mapping onto $C$. If $C$ is closed and convex, then $\Pi_C$ reduces to a single-valued mapping. We will identify $\Pi_C$ with the corresponding projection matrix if $C$ is a linear subspace. We further use $\iota_C$ and $\sigma_C$ to denote the indicator and support function of $C$. The set $\aff(C)$ denotes the affine hull of $C$ and the relative interior of a convex set $C \subseteq \Rn$ is given by $\mathrm{ri}(C)$. The tangent cone $T_{C}(x)$ for a set $C$ at $x\in C$ is defined as $T_{C}(x)=\limsup_{t\downarrow 0}\frac{C-x}{t}$. Since we only consider the normal cone of convex sets in this paper, the normal cone $N_{C}(x)$ of $C$ at $x$ can be defined as the polar of the tangent cone, i.e., $N_C(x) = T_C(x)^\circ$. Moreover, for a closed convex cone $K$, the lineality space $\lin(K)$ of $K$ is given by $\lin(K)=K\cap -K$. 


\textit{Epi-Convergence and Derivatives.} The effective domain of a function $\theta : \Rn \to \Rexx$ is defined as $\dom(\theta):=\{x \in \Rn : \theta(x)<\infty\}$. We call $\theta$ proper if $\theta(x) > -\infty$ for all $x$ and $\dom(\theta) \neq \emptyset$. The epigraph $\epii(\theta)$ of a mapping $\theta:\R^n\to\Rex$ is given by $\epii(\theta):=\{(x,t)\in\R^n\times \R: \theta(x) \leq t\}$. We say that a sequence of extended real-valued functions $\{\theta_k\}_k$ epi-converges to $\theta$ if $\epii(\theta_k)\to\epii(\theta)$ (in the Painlev\'{e}-Kuratowski sense). We will write $\theta_k\elimit\theta$ in such case. Epi-convergence of the sequence $\{\theta_k\}_k$ to the function $\theta$ is equivalent to the condition
\be \label{eq:def-epi} \left[ \begin{array}{ll} \liminf_{k \to \infty}~\theta_k(x^k) \,\, \geq \theta(x) & \text{for every sequence } x^k \to x, \\[.5ex] \limsup_{k \to \infty}~\theta_k(x^k) \leq \theta(x) & \text{for some sequence } x^k \to x, \end{array} \right. \quad \forall~x, \ee
cf. \cite[Proposition 7.2]{rockafellar2009variational}. For an extended real-valued function $\theta:\R^n\to \Rex$ and  $x\in\dom(\theta)$, the regular subdifferential of $\theta$ at $x$ is defined as:
\[  \hat{\partial}\theta(x):=\{v\in \R^n: \theta(y)\geq \theta(x)+\langle  v,y-x\rangle+o(\|y-x\|)\}.      \] 
The subdifferential of $\theta$ at $x$ is then given by $ \partial \theta(x) := \limsup_{y\to x, \theta(y)\to\theta(x)} \hat{\partial}\theta(y)$. The {lower subderivative} of $\theta$ at $x \in \dom(\theta)$ in the direction $h \in \Rn$ is defined as follows: 
\[ \rd\theta(x)(h) := \liminf_{t \downarrow 0, \, \tilde h \to h}~\Delta_t \;\! \theta(x)(\tilde h), \quad \Delta_t \;\! \theta(x)(h) := \frac{\theta(x+th)-\theta(x)}{t}. \]
We say that $\theta$ is {(directionally) epi-differentiable} at $x$ in the direction $h \in \Rn$ with {epi-derivative} $\rd\theta(x)(h)$ iff for every $\{t_k\}_k$, $t_k \downarrow 0$, the sequence $\{\Delta_{t_k} \;\! \theta(x)\}_k$ epi-converges to $\rd\theta(x)(h)$ in the sense of \cref{eq:def-epi} for fixed $h$.
The function $\theta$ is called {directionally differentiable} at $x$ in the direction $h$ if the limit $\theta^\prime(x;h) = \lim_{t \downarrow 0} \Delta_t \;\! \theta(x)(h)$ exists. Moreover, $\theta$ is {semidifferentiable} at $x$ in the direction $h \in \Rn$ if the limit $\lim_{t \downarrow 0, \tilde h \to h} \Delta_t \;\! \theta(x)(\tilde h)$ exists. In this case, we use the same term $\theta^\prime(x;h) $ to denote its limit. 
In this paper, we utilize different notions of generalized second-order differentiability to study the variational properties of the model \cref{composition}. We mainly work with the {second subderivative} $\mathrm{d}^2\theta(x|v)(h)$ of $\theta$ at $x$ relative to $v$ in the direction $h \in \Rn$, which is given by
%
\[ \mathrm{d}^2\theta(x|v)(h) = \liminf_{t\downarrow 0, \, \tilde h \to h}~\Delta_t^2  \;\! \theta(x|v)(\tilde h), \quad \Delta_t^2  \;\! \theta(x|v)(h) := \frac{\theta(x+th)-\theta(x)-t \cdot \iprod{v}{h}}{\half t^2}. \]
We say that $\theta$ is {twice epi-differentiable} at $x$ for $v \in \Rn$ if the second-order difference quotients $h \mapsto \Delta_{t_k}^2  \;\! \theta(x|v)(h)$ epi-converge in the sense of \cref{eq:def-epi} for all $\{t_k\}_k$ with $t_k \downarrow 0$. We use $\mathrm{d}^2\theta(x|v)$ to denote the corresponding epi-limit. In addition, $\theta$ is called {properly twice epi-differentiable} at $x$ for $v$ if $\mathrm{d}^2\theta(x|v)$ is proper. 
%
%
Finally, $\theta$ is called {twice semidifferentiable} at $x$ if it is semidifferentiable and if the limit 
\[ \lim_{t\downarrow 0, \, \tilde h \to h} \frac{\theta(x+t\tilde h) - \theta(x) - t \cdot \theta^\prime(x;\tilde h)}{\frac12 t^2} \]
exists for all $h \in \Rn$. The limiting function will then be denoted by $\theta^{\prime\prime}(x; \cdot)$. The interested reader is referred to \cite[Chapter 13]{rockafellar2009variational} and \cite[Sections 2.2 and 3.3.5]{bonnans2013perturbation} for a thorough discussion of these second-order concepts. 

For a mapping $F:\mathbb{R}^{n}\rightarrow\mathbb{R}^m$, the set $\mathcal{R}(F):=\{y \in \R^m: \exists~x \in \Rn \, \text{with} \, y=F(x)\}$ denotes the range of $F$. For a locally Lipschitz continuous mapping $F:\R^n\to \R^m$, the Bouligand and Clarke subdifferential are defined as $\partial_B F(x) := \{V\in \R^{m\times n}: x^k\to x,~\text{$F$ is differentiable at $x^k$},~ DF(x^k)\to V\}$ and $\partial F(x) := \conv(\partial_B F(x))$, respectively. The graph of a set-valued mapping $F : \R^{n}\rightrightarrows\R^{m}$ is denoted by $\gph(F)$.
%


\subsection{Basic Proximal Properties}
\label{sec:basicprox}
In the following, we collect basic properties of the proximity operator and the Moreau envelope that will play a more important role in our later analysis. For a proper, lsc function $\vp:\Rn \to \Rex$ and $\tau > 0$, the proximity operator and Moreau envelope of $\vp$ are defined as

\[\proxs : \Rn \rightrightarrows \Rn, \quad \proxs(x) \in \argmin_{y \in \Rn}~\vp(y) + \frac{1}{2\tau}\|y-x\|^2 \]
and $\envs(x) := e_\tau(\varphi)(x) := \inf_{y \in \Rn} \vp(y) + \frac{1}{2\tau}\|y-x\|^2$, see \cite{moreau1965proximite,rockafellar2009variational,bauschke2017convex}. Throughout this work, $\vp$ is assumed to be a proper, lsc mapping and the parameter $\tau$ in $\proxs$ is positive. We first introduce a regularity concept for the mapping $\vp$.

\begin{defn}
    \label{defn2-1}
     We say $\vp$ is globally prox-regular at $\bar x$ for $\bar v \in \partial\vp(\bar x)$ 
     (with constant $\rho$), if $\vp$ is locally lsc at $\bar x$, and there exists $\veps>0$ and $\rho\geq 0$ such that 
    \[ \vp(x')\geq \vp(x)+\langle v,x'-x \rangle-\frac{\rho}{2}\|x'-x\|^2  \quad \forall~x'\in\R^n         \]
     when $\|x-\bar x\| < \veps$, $\|v-\bar v\|<\veps$, $v\in \partial\vp(x)$, and $\vp(x)<\vp(\bar x)+\veps$. If this is true for all $\bar v\in \partial \vp(\bar x)$, then we say $\vp$ is globally prox-regular at $\bar x$.
  \end{defn}

Global prox-regularity is not a very restrictive assumption, i.e., it is guaranteed to hold under prox-boundedness and prox-regularity, cf. \cite[Propositions 8.46(f) and 13.37]{rockafellar2009variational}. We mainly assume this condition to ensure local single-valuedness and Lipschitz continuity of the proximity operator and to have more explicit control of $\tau$.

\begin{prop}
    \label{prop_equi_quad_diff}
    Assume that $\vp$ is globally prox-regular at $\bar x$ for $\bar v\in\partial \vp(\bar x)$ with constant $\rho \geq 0$ and $\tau\rho<{1}$. The following statements are equivalent:
\begin{enumerate}[label=\textup{\textrm{(\roman*)}},topsep=0pt,itemsep=0ex,partopsep=0ex]
        \item The function $\vp$ is properly twice epi-differentiable at $\bar x$ for $\bar v$ and $\rd^2\vp(\bar x|\bar v)$ is generalized quadratic.
        \item The function $\envs$ is twice differentiable at $\bar z =\bar x+\tau \bar v$ and $e_\tau(\frac{1}{2}\rd^2\vp(\bar x|\bar v))$ is a quadratic function.
        \item The proximity operator $\proxs$ is differentiable at $\bar z=\bar x+\tau \bar v$.
    \end{enumerate}
\end{prop}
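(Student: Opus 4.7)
My plan is to exploit the known second-order correspondence -- bidirectional under the standing hypotheses -- between variational properties of $\vp$ at $\bar x$ for $\bar v$ and the behavior of $\envs$ and $\proxs$ at the proximal point $\bar z := \bar x + \tau\bar v$. Global prox-regularity and $\tau\rho < 1$ are precisely what make the Moreau envelope $C^{1,1}$, the proximity operator single-valued and Lipschitz, and the relevant deconvolution argument well-posed.

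First I would establish the equivalence \textup{(ii)} $\Leftrightarrow$ \textup{(iii)}. Under the standing assumptions, $\envs$ is $C^{1,1}$ in a neighborhood of $\bar z$ and is tied to $\proxs$ by the classical identity
\[
\nabla\envs(z) = \tau^{-1}(z - \proxs(z))
\]
on that neighborhood. Twice differentiability of $\envs$ at $\bar z$ is therefore exactly differentiability of $\nabla\envs$ at $\bar z$, which via the displayed identity is equivalent to differentiability of $\proxs$ at $\bar z$. The extra content of \textup{(ii)} -- that $e_\tau(\tfrac12 \rd^2\vp(\bar x|\bar v))$ is a quadratic function -- drops out of the second step once \textup{(i)} is in play, through the envelope identity recorded next.

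Next I would prove \textup{(i)} $\Leftrightarrow$ \textup{(ii)} via the second-order Moreau-envelope identity: for prox-regular $\vp$ with $\tau\rho < 1$, one has
\[
\tfrac12 \rd^2(\envs)(\bar z)(w) = e_\tau\bigl(\tfrac12 \rd^2\vp(\bar x|\bar v)\bigr)(w),
\]
with epi-convergence of the rescaled second-order difference quotients of $\envs$ at $\bar z$ controlled by that of $\vp$ at $\bar x$ for $\bar v$. For a $C^{1,1}$ function, twice differentiability at a point coincides with proper twice epi-differentiability whose second subderivative is a finite quadratic form, so \textup{(ii)} forces $\tfrac12 \rd^2\vp(\bar x|\bar v)$ to be a proper generalized quadratic whose Moreau envelope is the prescribed quadratic. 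Conversely, any generalized quadratic $q$ with $q \geq -\tfrac{\rho}{2}\|\cdot\|^2$ admits an envelope $e_\tau(q)$ that is a genuine finite quadratic form, since $\tau\rho < 1$ renders the inner minimization strictly convex; hence \textup{(i)} implies \textup{(ii)}.

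The main obstacle I anticipate is the backward direction in this second step: extracting epi-convergence of the rescaled second-order difference quotients of $\vp$ from pointwise second-order information on $\envs$ at the single point $\bar z$. This is where the quantitative bound $\tau\rho < 1$ does essential work. Global prox-regularity supplies a uniform quadratic two-sided control on the difference quotients of $\vp$, forcing every epi-cluster point to be a proper function; the envelope identity then pins this cluster point down as the unique generalized quadratic $q$ with $e_\tau(q) = \tfrac12 \rd^2(\envs)(\bar z)$, delivering both the properness and the generalized-quadratic structure required by \textup{(i)}.
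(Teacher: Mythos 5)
Your proposal is correct and follows essentially the same route as the paper: the identity $\nabla\envs=\tau^{-1}(I-\proxs)$ for \textup{(ii)}$\iff$\textup{(iii)}, the second-order envelope identity $\tfrac12(\envs)''(\bar z;\cdot)=e_\tau(\tfrac12\rd^2\vp(\bar x|\bar v))$ for \textup{(i)}$\iff$\textup{(ii)}, and a deconvolution argument for the hard backward direction. The only step you leave implicit is how the deconvolution actually certifies the generalized-quadratic structure; the paper makes this concrete by passing to conjugates, writing $e_\tau(\tfrac12\rd^2\vp(\bar x|\bar v))$ in terms of $g_\tau^*$ with $g_\tau:=\tfrac12\rd^2\vp(\bar x|\bar v)+\tfrac{1}{2\tau}\|\cdot\|^2$ convex, so that a quadratic envelope forces $g_\tau^*$ quadratic and hence $g_\tau=g_\tau^{**}$ generalized quadratic.
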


\cref{prop_equi_quad_diff} is a straightforward variant of \cite[Theorems 3.8 and 3.9]{poliquin1996generalized}. We provide a short proof in \cref{app:sec:equi-epi}. Here, $h : \Rn \to \Rex$ is called generalized quadratic, if there exist a symmetric matrix $Q \in \R^{n \times n}$ and a linear subspace $K \subseteq \Rn$ such that $h(x) = \iprod{x}{Qx} + \iota_K(x)$, cf. \cite[Definition 3.7]{poliquin1996generalized}. We continue with several basic properties of the proximity operator and Moreau envelope. 

\begin{prop}
    \label{prop-lowerbound-Q} Let $\vp$ be globally prox-regular at $\bar x$ for $\bar v\in\partial \vp(\bar x)$ with constant $\rho \geq 0$ and $\tau\rho<{1}$. Then, there is a neighborhood $U$ of $\bar z := \bar x+\tau \bar v$ such that the following conditions hold:
\begin{enumerate}[label=\textup{\textrm{(\roman*)}},topsep=0pt,itemsep=0ex,partopsep=0ex]
        \item The proximity operator $\proxs$ is single-valued and Lipschitz continuous with modulus $\frac{1}{1-\tau\rho}$ on $U$. Moreover, $\envs$ is continuously differentiable on $U$ with $\nabla\envs(z) = \frac{1}{\tau}(z-\proxs(z))$ for all $z \in U$.
        \item For all $z\in U$ and $V\in\partial\proxs(z)$, it holds that $0\preceq V \preceq \frac{1}{1-\tau\rho}I$. 
    \end{enumerate}
\end{prop}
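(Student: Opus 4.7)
The plan is to extract a hypomonotonicity-type estimate from the global prox-regularity of $\vp$ and then use it to establish both parts. For any $z_1, z_2$ sufficiently close to $\bar z$ and any $y_i \in \proxs(z_i)$, the optimality condition yields $v_i := (z_i - y_i)/\tau \in \partial \vp(y_i)$, and applying the inequality of \Cref{defn2-1} to each pair and summing gives $\iprod{v_1 - v_2}{y_1 - y_2} \geq -\rho \|y_1 - y_2\|^2$. Substituting the expression of $v_i$ and rearranging produces the central estimate
\begin{equation*}
\iprod{z_1 - z_2}{y_1 - y_2} \geq (1 - \tau\rho)\|y_1 - y_2\|^2.
\end{equation*}
Making this rigorous requires choosing a neighborhood $U$ of $\bar z$ so that any admissible selections $y_i, v_i$ satisfy the smallness conditions in \Cref{defn2-1}. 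Nonemptyness of $\proxs(z)$ on $U$ then follows from $\bar x \in \proxs(\bar z)$ -- itself a direct consequence of the prox-regularity inequality at $(\bar x, \bar v)$ when $\tau\rho < 1$ -- combined with a standard localization argument for proper lsc functions.

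For part (i), the central estimate with $z_1 = z_2$ immediately yields single-valuedness of $\proxs$ on $U$, and Cauchy--Schwarz gives $\|\proxs(z_1) - \proxs(z_2)\| \leq (1-\tau\rho)^{-1}\|z_1 - z_2\|$. For the envelope, I would apply a Danskin-type computation to the defining infimum using single-valuedness of $\proxs$ to obtain $\nabla \envs(z) = \tau^{-1}(z - \proxs(z))$; continuous differentiability of $\envs$ then follows from the Lipschitz continuity of $\proxs$ on $U$.

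For part (ii), let $z \in U$ be a point of Fr\'echet differentiability of $\proxs$ and set $V = D\proxs(z)$. Substituting $z_1 = z + th$, $z_2 = z$ in the central inequality, using $\proxs(z + th) - \proxs(z) = tVh + o(t)$, dividing by $t^2$, and passing to $t \downarrow 0$ yields $\iprod{h}{Vh} \geq (1 - \tau\rho)\|Vh\|^2$ for every $h \in \Rn$; in particular $\iprod{h}{Vh} \geq 0$. The gradient formula from (i) implies that $\envs$ is twice Fr\'echet differentiable at $z$ with Hessian $\tau^{-1}(I - V)$, and since $\nabla \envs$ is locally Lipschitz, this Hessian (and therefore $V$) is symmetric by the standard consequence of the equality of mixed partial derivatives for Lipschitz gradient maps. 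Combining symmetry, positive semidefiniteness, and the Lipschitz-induced spectral bound $\|V\| \leq (1 - \tau\rho)^{-1}$ then yields $0 \preceq V \preceq (1 - \tau\rho)^{-1} I$. Since these three properties -- symmetry, positive semidefiniteness, and the spectral upper bound -- are preserved under pointwise limits and convex combinations, the same bounds extend from $\partial_B \proxs(z)$ to $\partial \proxs(z) = \conv(\partial_B \proxs(z))$.

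The main technical obstacle is not conceptual but rather the careful bookkeeping needed to select $U$ so that all admissible selections $y_i$, $v_i$ remain inside the range where \Cref{defn2-1} can be invoked, and so that $\proxs$ is nonempty-valued on $U$. Once this setup is in place, the proposition is essentially a localization of classical results such as \cite[Proposition 13.37]{rockafellar2009variational}, with explicit control of the Lipschitz modulus in terms of the product $\tau\rho$.
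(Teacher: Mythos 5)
Your argument is correct, and for part (ii) it takes a genuinely different route from the paper. The paper proves the matrix bounds by invoking \cref{prop_equi_quad_diff} at each differentiability point $z$ of $\proxs$: there $\rd^2\vp(x|v)$ is generalized quadratic, $\iprod{h}{Qh}+\iota_K(h)$ with $Q+\rho I\succeq 0$ (via \cite[Theorem 13.36, Proposition 13.49]{rockafellar2009variational}), and the explicit formula $D\proxs(z)=\Pi_K(I+\tau Q)^{-1}\Pi_K$ from \cref{eq:use-in-app} delivers symmetry, positive semidefiniteness, and the spectral bound in one stroke, before the same density/convexity step you use to pass to $\partial\proxs(z)$. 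You instead work directly from the hypomonotonicity estimate $\iprod{z_1-z_2}{y_1-y_2}\geq(1-\tau\rho)\|y_1-y_2\|^2$ (which is also the engine behind the cited \cite[Theorem 4.4]{PolRoc96} for part (i)), obtaining $\iprod{h}{Vh}\geq(1-\tau\rho)\|Vh\|^2$ and then symmetry of $V$ from the symmetry of the Fr\'echet derivative of $\nabla\envs$ at a point of differentiability; that symmetry fact is classical and needs only differentiability of $\envs$ near $z$ together with differentiability of $\nabla\envs$ at $z$, so your appeal to it is legitimate (the Lipschitz property is not even required there). Your route is more elementary and self-contained -- it never touches second-order epi-differentiability -- whereas the paper's route reuses machinery it needs anyway (the formula \cref{eq:use-in-app} reappears in the appendix) and yields the finer structural description of $D\proxs(z)$ as $\Pi_K(I+\tau Q)^{-1}\Pi_K$, not just the two-sided matrix bound. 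One small remark: once symmetry is in hand, your inequality $\iprod{h}{Vh}\geq(1-\tau\rho)\|Vh\|^2$ already forces every eigenvalue $\lambda$ of $V$ to satisfy $0\leq\lambda\leq(1-\tau\rho)^{-1}$, so the separate Lipschitz-induced spectral bound is not needed. The localization bookkeeping you defer (choosing $U$ so that all selections $y_i,v_i$ fall within the $\veps$-window of \cref{defn2-1} and so that $\proxs$ is nonempty-valued) is indeed standard and is exactly what the cited references carry out.
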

\begin{proof}
     Part (i) follows from \cite[Theorem 4.4]{PolRoc96} or \cite[Proposition 13.37]{rockafellar2009variational}. To prove part (ii), let us introduce the set $S:=\{B\in\bS^{n}: 0 \preceq B\preceq {(1-\tau\rho)^{-1}}I \}$. Since $S$ is convex, it suffices to show $\partial_B\proxs(z)\subseteq S$. Let $z\in U$ be arbitrary and suppose that $\proxs$ is differentiable at $z$. Let us set $x=\proxs(z)$ and $v=\tau^{-1}{(z-x)}\in\partial\vp(x)$, cf. \cite[Proposition 4.3]{PolRoc96}. Utilizing part (i) and the definition of prox-regularity, we can infer that $\vp$ is globally prox-regular at $x$ for $v$ with the same constant $\rho \geq 0$ (after potentially shrinking $U$). By \cref{prop_equi_quad_diff}, we then have $\rd^2\vp(x|v)(h)=\langle h, Qh \rangle+\iota_{K}(h)$ for some linear subspace $K$ and symmetric matrix $Q$. Without loss of generality, we may additionally assume $\cR(Q)\subseteq K$ (otherwise, we can redefine $Q$ as $Q := \Pi_KQ\Pi_K$). Applying \cite[Theorem 13.36 and Proposition 13.49]{rockafellar2009variational}, it holds that $Q+\rho I\succeq 0$. Combining this with \cite[Exercises 13.18, 13.35, and 13.45]{rockafellar2009variational}, we obtain
     \begin{equation} \label{eq:use-in-app}    D\proxs(z)h=\argmin_{s\in\R^n}~\frac{1}{2}\rd^2\vp(x|v)(s)+\frac{1}{2\tau}\|s-h\|^2=\Pi_{K}(I+\tau Q)^{-1}\Pi_{K}h,            \end{equation}
     which implies $D\proxs(z)=\Pi_{K}(I+\tau Q)^{-1}\Pi_{K}$. Due to $I+\tau Q = \tau (Q+\rho I) + (1-\tau\rho) I \succeq (1-\tau\rho) I$, the matrix $(I+\tau Q)^{-1}$ is positive definite and hence $D\proxs(z)$ is positive semidefinite. Furthermore, we have 
     \[     \|D\proxs(z)\|\leq \|\Pi_K\|^2\|(I+\tau Q)^{-1}\|\leq (1-\tau\rho)^{-1}.                   \]
     This proves $D\proxs(z)\in S$. The claim $\partial_B\proxs(z)\subseteq S$, $z \in U$, then follows by a density argument and from the fact that $S$ is closed.
\end{proof}

The following result generalizes \cite[Theorem 7.37]{rockafellar2009variational} to the weakly convex case. \cref{prop2-5} will be applied later to study epi-convergence of a sequence of second subderivatives (which, by \cite[Proposition 13.49]{rockafellar2009variational}, are weakly convex). Here, an extended real-valued function $h$ is said to be $\rho$-weakly convex, if the mapping $h+\frac{\rho}{2}\|\cdot\|^2$ is convex. The proof of \cref{prop2-5} is presented in  \cref{app:sec:prop-technical}.

 
\begin{prop}
    \label{prop2-5}
    Let $\{f_k\}_k$ and $f$ be a family of proper, lsc, and $\rho$-weakly convex functions. 
    Suppose that $f_k+\frac{\rho}{2}\|\cdot\|^2\geq 0$ and $f+\frac{\rho}{2}\|\cdot\|^2\geq 0$ for all $k \in \n$. Then, the following statements are equivalent:
\begin{enumerate}[label=\textup{\textrm{(\roman*)}},topsep=0pt,itemsep=0ex,partopsep=0ex]
        \item It holds that $f_k\elimit f$.
        \item We have $\mathrm{env}_{\tau f_k} \rightarrow \mathrm{env}_{\tau f}$ point-wisely for some $\tau>0$ with $\tau\rho<1$.  
    \end{enumerate}
\end{prop}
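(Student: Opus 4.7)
The plan is to reduce \cref{prop2-5} to the classical Attouch-type equivalence for convex functions, using that weak convexity can be absorbed into a quadratic. Set
\[ g_k := f_k + \tfrac{\rho}{2}\|\cdot\|^2, \qquad g := f + \tfrac{\rho}{2}\|\cdot\|^2. \]
By the hypotheses, $g_k$ and $g$ are proper, lsc, convex, and nonnegative. Since $\tfrac{\rho}{2}\|\cdot\|^2$ is continuous and finite-valued, a direct verification of the two conditions in \cref{eq:def-epi} shows that $f_k\elimit f$ if and only if $g_k\elimit g$: the liminf bound is preserved because $\frac{\rho}{2}\|x_k\|^2\to\frac{\rho}{2}\|x\|^2$ whenever $x_k\to x$, and any recovery sequence for one pair is automatically one for the other.

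Next, I would complete the square to convert the Moreau envelope of $f_k$ into a shifted envelope of $g_k$. Using $\tau\rho<1$ and setting $\sigma:=\tau/(1-\tau\rho)>0$, a short calculation gives
\[ \tfrac{1}{2\tau}\|y-x\|^2-\tfrac{\rho}{2}\|y\|^2 = \tfrac{1}{2\sigma}\bigl\|y-(1-\tau\rho)^{-1}x\bigr\|^2-\tfrac{\rho}{2(1-\tau\rho)}\|x\|^2 \]
for all $x,y\in\R^n$. Substituting $f_k=g_k-\tfrac{\rho}{2}\|\cdot\|^2$ into the definition of $\envs_{\tau f_k}$ and taking the infimum in $y$ then yields the identity
\[ \envs_{\tau f_k}(x) = \mathrm{env}_{\sigma g_k}\bigl((1-\tau\rho)^{-1}x\bigr)-\tfrac{\rho}{2(1-\tau\rho)}\|x\|^2, \]
and an analogous identity holds for $f$ and $g$. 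Because $x\mapsto(1-\tau\rho)^{-1}x$ is a bijection of $\R^n$ and the quadratic correction is independent of $k$, pointwise convergence $\envs_{\tau f_k}\to\envs_{\tau f}$ is equivalent to pointwise convergence $\mathrm{env}_{\sigma g_k}\to\mathrm{env}_{\sigma g}$.

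The final step is to invoke the classical Attouch theorem for proper, lsc, convex functions -- see e.g.\ \cite[Theorem 7.37]{rockafellar2009variational} -- which states that $g_k\elimit g$ is equivalent to $\mathrm{env}_{\sigma g_k}\to\mathrm{env}_{\sigma g}$ pointwise for some (indeed, for every) $\sigma>0$. Chaining this with the two reductions above establishes $\mathrm{(i)}\Longleftrightarrow\mathrm{(ii)}$. The only place where the bound $\tau\rho<1$ is essential is the completion-of-squares step, where it ensures $\sigma>0$ and guarantees that $\envs_{\tau f_k}$ and $\envs_{\tau f}$ are finite-valued, so that pointwise convergence is a meaningful statement. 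I expect this completion-of-squares reduction to be the only genuinely computational step; the remaining manipulations with epi-convergence and the convex-case citation are routine.
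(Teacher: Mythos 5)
Your proof is correct and follows essentially the same route as the paper: both arguments reduce to the convex functions $g_k = f_k + \tfrac{\rho}{2}\|\cdot\|^2$, relate $\mathrm{env}_{\tau f_k}$ to $\mathrm{env}_{\sigma g_k}$ with $\sigma = \tau/(1-\tau\rho)$, and then invoke the convex case of \cite[Theorem 7.37]{rockafellar2009variational}. The only difference is presentational -- you obtain the envelope identity by an explicit (and correct) completion of squares, whereas the paper derives the same relation from a conjugate-function formula for Moreau envelopes of weakly convex functions.
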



\section{Decomposable Functions}
\label{sec:dec}
In this section, we conduct a systematic study of the second-order variational properties of decomposable functions. We first provide a formal definition of decomposablility and list several basic properties in \Cref{sec:def-basic}. In \Cref{sec:full-chain}, we focus on the class of $C^2$-strictly decomposable functions. Specifically, we derive a chain rule for the (strict) second subderivative of $C^2$-strictly decomposable functions. We then leverage the obtained results to prove the equivalence between the strict twice epi-differentiability at one point and the continuous differentiability of the proximity operator generalizing \cite[Theorems 3.5 and 4.9]{hang2024chain}. Under an additional geometric regularity condition on the support set $\pvpd = \partial\vp_d(0)$, the so-called $\usotp$-property, we further provide an explicit formula for the strict second subderivative. As applications, these results allow us to fully characterize the strict saddle point property studied in \cite{DavDru22} and the strong metric regularity of $\partial\vp$. 
 
As mentioned, the concept of {decomposable functions} was initially proposed by Shapiro in \cite{shapiro2003class} and is strongly related to the notions of {amenable functions}, see \cite{PolRoc92,PolRoc93} or \cite[Chapter 10.F]{rockafellar2009variational}, and of {$C^{\ell}$-cone reducible sets} in nonlinear, constrained optimization, see \cite{bonnans2013perturbation} and \Cref{def:cone-red}. The class of decomposable functions is rich and a vast number of optimization problems can be treated within the framework of decomposable functions \cite{shapiro2003class,milzarek2016numerical}, including polyhedral and group sparse problems, second-order cone and semidefinite programming or nuclear norm regularized optimization problems. 

\begin{defn}[Decomposable functions] \label{def:decomp}
  A function $ \varphi: \mathbb{R}^{n} \rightarrow\Rex $ is called $ C^{\ell} $-decomposable, $ \ell \in \mathbb{N} $, at a point $ \bar{x} \in \dom(\varphi)$, if there is an open neighborhood $ U $ of $ \bar{x} $ such that
  %
  \[ \varphi(x)=\varphi(\bar{x})+\varphi_{d}(F(x)), \quad \forall~x \in U, \]
  %
  and the functions $ \varphi_{d} $ and $ F $ satisfy:
\begin{enumerate}[label=\textup{\textrm{(\roman*)}},topsep=0pt,itemsep=0ex,partopsep=0ex]
	\item $ F: U \rightarrow \mathbb{R}^{m} $ is $\ell$-times continuously differentiable on $ U $ and $ F(\bar x)=0 $.
	\item $ \varphi_{d}: \mathbb{R}^{m} \rightarrow \Rex$ is convex, proper, lsc, and positively homogeneous, or equivalently, we have $\vp_d=\sigma_{\pvpd}$ for some nonempty closed convex set $\pvpd \subseteq \R^m$.  
	\item Robinson's constraint qualification holds at $ \bar{x} $:
	\end{enumerate}
  \begin{equation} \label{eq:robinson}
  0 \in \mathrm{int}\{F(\bar{x})+D F(\bar{x}) \mathbb{R}^{n}-\dom(\varphi_{d})\}=\mathrm{int}\{D F(\bar{x}) \mathbb{R}^{n}-\dom(\varphi_{d})\}.
  \end{equation}
We say that $ \varphi $ is $ C^{\ell} $-strictly decomposable at $\bar{x}$ (for $\bar\lambda \in \pvpd$) if $ \varphi $ is $ C^{\ell} $-decomposable at $ \bar{x} $ and if, in addition, the strict condition
\begin{equation} \label{eq:strict-cq} D F(\bar{x}) \mathbb{R}^{n}-N_{\pvpd}(\bar\lambda)=\mathbb{R}^{m} \end{equation}
is satisfied at $ \bar{x} $ for some $ \bar\lambda \in \pvpd $. We say that $ \varphi $ is $C^{\ell}$-fully decomposable at $ \bar{x} $ if the strict condition \cref{eq:strict-cq} can be replaced by the nondegeneracy condition:
\begin{equation} \label{eq:nondeg-cq} D F(\bar{x}) \mathbb{R}^{n}+\lin(N_{\pvpd}(\bar\lambda))=\mathbb{R}^{m}. \end{equation}
%
\end{defn}

If $\vp$ is decomposable at $\bar x$, then the functions $\vp_d$ and $F$ form a \textit{decomposition pair} $(\vp_d,F)$ of $\vp$. Of course, decomposition pairs do not need to be unique. 

Due to \cite[Proposition 2.97 and Corollary 2.98]{bonnans2013perturbation}, Robinson's constraint qualification \cref{eq:robinson} is equivalent to the basic constraint qualification used in the definition of amenable functions, cf. \cite[Definition 10.23]{rockafellar2009variational}. Therefore, any $C^1$-decomposable function is also amenable. As discussed in \cite[Lemma 5.1.7]{milzarek2016numerical}, the strict constraint qualification is in analogy to \cite[Equation (4.119)]{bonnans2013perturbation} and allows establishing the uniqueness of the associated Lagrange multipliers. It will play a similar role in the case of decomposable functions, see \cref{prop:prop-strict}. The constraint nondegeneracy condition appeared originally in Shapiro's work \cite{shapiro2003class} to ensure partial smoothness of decomposable functions and hence, $C^2$-fully decomposable functions are $C^2$-partly smooth in the sense of \cite[Definition 14]{DanHarMal06}. We summarize these observations in the following fact.

\begin{fact} \label{fact:structure} Every $C^2$-decomposable function (at $\bar x)$ is strongly amenable (at $\bar x$). Every $C^2$-fully decomposable mapping (at $\bar x$) is $C^2$-partly smooth (at $\bar x$).
\end{fact}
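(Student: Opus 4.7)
The plan is to verify both assertions by matching the ingredients furnished in \Cref{def:decomp} against the definitions of strong amenability and partial smoothness, and then appealing to established results in the literature. Neither assertion requires an independent argument beyond bookkeeping.

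For the first claim, strong amenability at $\bar x$ in the sense of \cite[Def. 10.23]{rockafellar2009variational} demands a local composite representation $\varphi = g\circ F$ with $F$ of class $C^2$, $g$ proper, lsc and convex, together with the basic constraint qualification used there. \Cref{def:decomp} already supplies an inner map $F$ of class $C^2$ with $F(\bar x)=0$, and an outer function $\varphi_d = \sigma_{\pvpd}$ that is proper, lsc and sublinear, hence convex. The only remaining step is the identification of Robinson's constraint qualification \cref{eq:robinson} with the basic constraint qualification underlying amenability; this identification is precisely \cite[Prop. 2.97 and Cor. 2.98]{bonnans2013perturbation}, as already noted in the discussion following \Cref{def:decomp}.

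For the second claim, the plan is to construct the active manifold for partial smoothness by pulling back, through the $C^2$-map $F$, a natural smooth stratum of the support set $\pvpd$ near the active multiplier $\bar\lambda$. Concretely, one uses the cone-reducibility of $\pvpd$ at $\bar\lambda$ to obtain a local $C^2$-manifold $\cM_{\pvpd}$ inside $\pvpd$ and sets $\cM := F^{-1}(\cM_{\pvpd})\cap U$. The nondegeneracy condition \cref{eq:nondeg-cq} is exactly the transversality hypothesis needed for the implicit function theorem to guarantee that $\cM$ is itself a $C^2$-manifold near $\bar x$ of the correct codimension. The four axioms of partial smoothness in \cite[Def. 14]{DanHarMal06}---smooth restriction of $\varphi$ to $\cM$, the manifold property, subdifferential continuity along $\cM$, and normal sharpness---are then inherited from the corresponding properties of $\varphi_d$ along the face lattice of $\pvpd$, with \cref{eq:nondeg-cq} supplying the surjectivity required at each step. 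This programme is carried out in \cite{shapiro2003class} and refined in \cite{hare2006functions}, so in practice the proof of the second claim reduces to quoting those works.

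The hard part, were one to expand the second assertion into a self-contained argument, is normal sharpness: one must show that $\ri \partial\varphi(\bar x)$ spans the normal space of $\cM$ at $\bar x$, and for this the full lineality-space information in $\lin(N_{\pvpd}(\bar\lambda))$ must be tracked through the chain rule for $\partial(\varphi_d\circ F)$. This is precisely where the nondegeneracy condition \cref{eq:nondeg-cq} enters in its full strength, as opposed to the weaker strict condition \cref{eq:strict-cq}. The remaining axioms are then routine consequences of the amenable chain rule (already available from the first assertion) together with the smoothness of $\varphi_d$ on $\ri(\cF)$ for the face $\cF$ of $\pvpd$ exposed by $\bar\lambda$.
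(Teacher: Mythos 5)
Your proposal follows the same route as the paper: the statement is presented there as a Fact without a separate proof, justified exactly as you do by (a) matching the ingredients of \Cref{def:decomp} against the definition of strong amenability and invoking the equivalence of Robinson's constraint qualification with the basic constraint qualification via \cite[Proposition 2.97 and Corollary 2.98]{bonnans2013perturbation}, and (b) quoting \cite{shapiro2003class} and \cite{hare2006functions} for partial smoothness under nondegeneracy. One correction to your elaboration of the second claim: the active manifold is not the preimage of a stratum of $\pvpd$ obtained from cone-reducibility (which is not assumed in \Cref{def:decomp}, and note that $F(\bar x)=0$ need not lie in $\pvpd$ at all); rather, it is $\cM=\{x: F(x)\in \lin(N_{\pvpd}(\bar\lambda))\}$, the preimage of the lineality space of the normal cone, on which $\sigma_{\pvpd}$ is linear --- the nondegeneracy condition \cref{eq:nondeg-cq} is then precisely the transversality needed for $\cM$ to be a $C^2$-manifold, as used later in the paper's proofs of \cref{equi_strict_diff} and \cref{thm:saddle}. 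Since your argument ultimately defers to the cited works rather than to this sketch, the conclusion stands.
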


\subsection{Basic Properties}
\label{sec:def-basic}

In this section, we present stability results and several basic properties of decomposable functions. 
In particular, we show that $C^2$-decomposable functions are properly twice epi-differentiable and we provide an explicit formula for the second-order subderivative. 



The following preparatory result allows deriving equivalent representations of the constraint qualifications utilized in \cref{def:decomp} and other works. 
\begin{prop}
    \label{nprop4-3}
    Let $A,B\subseteq \R^m$ be closed convex cones. Then, $A+B=\R^m$ iff $A^\circ\cap B^\circ=\{0\}$. 
\end{prop}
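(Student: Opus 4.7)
The plan is to prove the equivalence by handling each direction separately, with the forward implication being an immediate computation and the backward implication relying on standard polar cone calculus combined with a relative-interior argument to upgrade closure to equality.

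For the forward direction, I would argue as follows. Suppose $A+B=\R^m$ and take any $v\in A^\circ\cap B^\circ$. Using the hypothesis, write $v=a+b$ for some $a\in A$ and $b\in B$. Since $\langle v,a\rangle\le 0$ and $\langle v,b\rangle\le 0$ by definition of the polar cones, we obtain
\[
\|v\|^2 \;=\; \langle v,a\rangle + \langle v,b\rangle \;\le\; 0,
\]
which forces $v=0$. This establishes $A^\circ\cap B^\circ=\{0\}$.

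For the backward direction, I would exploit the identity $(A+B)^\circ = A^\circ\cap B^\circ$, which holds for arbitrary cones since $0$ lies in both $A$ and $B$. Under the assumption $A^\circ\cap B^\circ=\{0\}$, this yields $(A+B)^\circ=\{0\}$. Because $A+B$ is a convex cone containing the origin, the bipolar theorem gives $\overline{A+B}=(A+B)^{\circ\circ}=\{0\}^\circ=\R^m$. Thus the sum is dense in $\R^m$. To promote density to equality, I would use that $A+B$ is convex and invoke the standard fact $\ri(\overline{C})\subseteq C$ for any convex set $C$, so
\[
\R^m \;=\; \ri(\R^m) \;=\; \ri(\overline{A+B}) \;\subseteq\; A+B \;\subseteq\; \R^m,
\]
which concludes the proof.

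The only delicate step I anticipate is the promotion from $\overline{A+B}=\R^m$ to $A+B=\R^m$, since the Minkowski sum of two closed convex cones need not be closed in general. The relative-interior identity circumvents this issue cleanly here, because $\overline{A+B}=\R^m$ has nonempty interior and $A+B$ is convex; no additional closedness hypothesis on $A+B$ is needed. Everything else reduces to two textbook facts (the polar-of-sum formula and the bipolar theorem for closed convex cones), both of which are standard and may be cited from \cite{rockafellar1970convex}.
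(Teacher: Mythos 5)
Your proof is correct and follows essentially the same route as the paper's: the backward direction in both cases combines the polar-of-sum identity with the bipolar theorem to get $\cl(A+B)=\R^m$, and then uses the relative-interior fact $\ri(\cl(C))=\ri(C)\subseteq C$ for convex $C$ to upgrade density to equality. The only cosmetic difference is that you handle the forward direction by a direct inner-product computation rather than by polarizing $A+B=\R^m$, which is equally valid.
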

\begin{proof} By \cite[Proposition 6.27]{bauschke2017convex} and taking the polar cone on both sides of $A+B=\R^m$, we obtain $A^\circ\cap B^\circ=\{0\}$ . Next, let us assume $A^\circ\cap B^\circ=\{0\}$. Taking the polar cone on both sides and using \cite[Proposition 6.35]{bauschke2017convex}, it follows $\mathrm{cl}(A+B)=\R^m$. Since the set $A+B$ is convex, it has nonempty relative interior and we can infer $A+B=\R^m$, cf. \cite[Theorems 6.1, 6.2, and 6.3]{rockafellar1970convex} or \cite[Proposition 2.97]{bonnans2013perturbation}.
\end{proof}

We now show that the constraint qualifications in \cref{def:decomp} are stable under small perturbations.
\begin{prop}
    \label{prop1-11} We consider the setting in \cref{def:decomp} with $F$ being continuously differentiable. It holds that: 
\begin{enumerate}[label=\textup{\textrm{(\roman*)}},topsep=0pt,itemsep=0ex,partopsep=0ex]
        \item If Robinson's constraint qualification \cref{eq:robinson}  holds at $\bar x$, then it is also satisfied on some neighborhood of $\bar x$.
        \item If the strict condition \cref{eq:strict-cq} is satisfied at $\bar x$ and $\bar\lambda \in \pvpd$, then it holds that $DF(x)\Rn - N_\pvpd(\bar\lambda) = \Rm$ for all $x$ in a neighborhood of $\bar x$. 
        \item If the nondegeneracy condition \cref{eq:nondeg-cq} holds at $\bar x$, then it holds on some neighborhood of $\bar x$ and for every $\lambda\in \pvpd$.
    \end{enumerate}
\end{prop}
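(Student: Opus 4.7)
The plan is to reformulate each of the three constraint qualifications as a dual zero-intersection condition via \cref{nprop4-3} and then to deduce stability by a single compactness/continuity argument that compares the continuously varying subspace $\ker DF(x)^\top$ with a fixed closed convex cone independent of $x$.

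For part (i), since $F(\bar x)=0$ and $\dom(\vp_d)$ is a convex cone (because $\vp_d=\sigma_\pvpd$ is positively homogeneous), the set $DF(\bar x)\Rn-\dom(\vp_d)$ is itself a convex cone. I would use that a convex cone contains $0$ in its interior iff it equals $\Rm$, together with $\mathrm{int}(C)=\mathrm{int}(\cl(C))$ for convex $C$, to recast Robinson's CQ \cref{eq:robinson} at $\bar x$ as the identity $DF(\bar x)\Rn-\cl(\dom(\vp_d))=\Rm$. \cref{nprop4-3} then reduces this to
\[
\ker DF(\bar x)^\top \cap \cl(\dom(\vp_d))^\circ = \{0\}.
\]
Stability in $x$ follows by a standard compactness argument: a hypothetical sequence $x_n\to\bar x$ with $v_n\in\ker DF(x_n)^\top\cap\cl(\dom(\vp_d))^\circ$, $\|v_n\|=1$, would, by compactness of the unit sphere, admit a limit $v\neq 0$ lying in the closed cone $\cl(\dom(\vp_d))^\circ$ and, by continuity of $DF$, in $\ker DF(\bar x)^\top$, contradicting the hypothesis at $\bar x$. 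Reversing \cref{nprop4-3} gives $DF(x)\Rn-\cl(\dom(\vp_d))=\Rm$ nearby, which in turn forces $\mathrm{int}(DF(x)\Rn-\dom(\vp_d))=\Rm$ (the two sets share the same closure and hence the same interior), so the translate by $F(x)$ does not affect that $0$ lies in the interior. Part (ii) is parallel: $N_\pvpd(\bar\lambda)$ is already a closed convex cone, so \cref{nprop4-3} converts \cref{eq:strict-cq} into $\ker DF(\bar x)^\top\cap T_\pvpd(\bar\lambda)=\{0\}$ (using $(-N)^\circ=-T$ and that $\ker DF(\bar x)^\top$ is a subspace), and the same compactness argument delivers stability in $x$.

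For part (iii), the crucial observation is the $\lambda$-independent identity
\[
\lin(N_\pvpd(\lambda))^\perp \,=\, \spa(T_\pvpd(\lambda)) \,=\, \parr(\aff(\pvpd)) \qquad \forall~\lambda\in\pvpd,
\]
which follows from the general polarity $\lin(N)=T^\perp$ for a closed convex cone $N$ with polar $T$, combined with the inclusions $\pvpd-\lambda\subseteq T_\pvpd(\lambda)\subseteq\parr(\aff(\pvpd))$. Since both $DF(\bar x)\Rn$ and $\lin(N_\pvpd(\bar\lambda))$ are linear subspaces, \cref{nprop4-3} converts \cref{eq:nondeg-cq} into the $\lambda$-independent condition $\ker DF(\bar x)^\top\cap\parr(\aff(\pvpd))=\{0\}$. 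The compactness argument applied with the fixed closed subspace $\parr(\aff(\pvpd))$ then simultaneously yields stability in $x$ and the ``for every $\lambda\in\pvpd$'' clause for free.

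The main technical subtlety lies in the reformulation step: justifying, despite $\dom(\vp_d)$ not necessarily being closed, that Robinson's CQ is equivalent to the closed-cone identity $DF(x)\Rn-\cl(\dom(\vp_d))=\Rm$, and verifying the polarity identity $\lin(N_\pvpd(\lambda))^\perp=\parr(\aff(\pvpd))$ that powers the $\lambda$-independence in (iii). Once these reformulations are secured, the compactness/continuity template covers all three parts uniformly.
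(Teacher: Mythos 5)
Your proposal is correct and follows essentially the same route as the paper: the paper likewise dualizes the nondegeneracy condition via \cref{nprop4-3} to the $\lambda$-independent statement $\ker(DF(\bar x)^\top)\cap\parr(\pvpd)=\{0\}$ and then invokes the "standard perturbation arguments" of Bonnans--Shapiro, which is exactly the compactness/continuity argument you spell out. The only difference is that you make explicit the reduction of Robinson's CQ to a conic surjectivity condition and the closure technicality for $\dom(\vp_d)$, which the paper delegates to the cited references.
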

 \begin{proof}
     The proof of (i) can be found in \cite[Section 2.3.4 and Remark 2.88]{bonnans2013perturbation}. For (iii), notice that the nondegeneracy condition is equivalent to $\ker(DF(\bar x)^\top)\cap \parr(\pvpd)=\{0\}$ by \cref{nprop4-3}, which is independent of the choice of $\lambda\in\pvpd$. Here, $\parr(\pvpd)$ denotes the linear subspace of $\R^m$ parallel to $\aff(\pvpd)$. Following the standard perturbation arguments as in \cite[Section 2.3.4 and Remark 2.88]{bonnans2013perturbation}, the condition $ \ker(DF(x)^\top)\cap \parr(\pvpd)=\{0\}$ holds on some neighborhood of $\bar x$. The argument for (ii) is similar. 
 \end{proof}
 
The following proposition provides a chain rule for the subdifferential of $\vp$.
\begin{prop}
    \label{prop4-4}
    Let $\vp$ be $C^2$-decomposable at $\bar x$ with decomposition pair $(\vp_d,F)$. Then there exists a neighborhood $U$ of $\bar x$ such that for all $x\in U\cap\dom(\vp)$, we have:
\begin{enumerate}[label=\textup{\textrm{(\roman*)}},topsep=0pt,itemsep=0ex,partopsep=0ex]
      \item It holds that $\partial\vp(x)=DF(x)^\top\partial\vp_d(x)$.
      \item The function $\vp$ is prox-regular and subdifferentially continuous at $x$ relative to $\dom(\vp)$.
    \end{enumerate}
\end{prop}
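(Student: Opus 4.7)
The plan is to derive both claims directly from three ingredients that are already available: (a) the stability of Robinson's constraint qualification in \cref{prop1-11}, (b) the classical subdifferential chain rule for convex--smooth compositions under Robinson's CQ, and (c) \cref{fact:structure}, which identifies $C^2$-decomposable mappings as strongly amenable.

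First I would fix the neighborhood. By \cref{def:decomp}, there is an open neighborhood $U_0$ of $\bar x$ on which the local representation $\vp = \vp(\bar x) + \vp_d \circ F$ is valid. Since Robinson's CQ \eqref{eq:robinson} holds at $\bar x$, \cref{prop1-11}(i) furnishes a (possibly smaller) neighborhood $U_1$ on which Robinson's CQ is preserved. Setting $U := U_0 \cap U_1$, at every $x \in U$ the function $\vp - \vp(\bar x)$ equals the composition $\vp_d \circ F$ of the proper lsc convex mapping $\vp_d$ with the $C^2$ inner map $F$, and Robinson's CQ is satisfied. In particular, $\vp$ is itself $C^2$-decomposable at each such $x$, so by \cref{fact:structure} it is strongly amenable at each $x \in U$.

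For part (i), I would then invoke the standard chain rule for convex--smooth compositions under Robinson's CQ (e.g.\ Rockafellar--Wets Theorem~10.6 and Exercise~10.25 for amenable functions, or Bonnans--Shapiro Proposition~2.126), which yields
\[ \partial \vp(x) \,=\, DF(x)^\top \partial \vp_d(F(x)) \qquad \text{for all } x \in U \cap \dom(\vp). \]
This relies only on the convexity of $\vp_d$, the smoothness of $F$, and Robinson's CQ, each of which is available at every $x \in U$. For part (ii), strong amenability at every $x \in U \cap \dom(\vp)$ combined with the classical result that strongly amenable functions are prox-regular and subdifferentially continuous relative to $\dom(\vp)$ (Rockafellar--Wets Proposition~13.32) delivers the conclusion.

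The only delicate point is that the chain rule and strong amenability must hold \emph{uniformly} on $U\cap\dom(\vp)$ rather than just at $\bar x$; this is precisely what the openness of $U_0$ in \cref{def:decomp} and the stability statement in \cref{prop1-11}(i) provide. Consequently, no genuinely new argument is required beyond invoking the well-established amenable-function calculus, and the proof reduces to carefully stitching together these standard ingredients.
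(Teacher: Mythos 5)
Your proposal is correct and follows essentially the same route as the paper: establish strong amenability on a neighborhood of $\bar x$ (the paper cites \cite[Exercise 10.25(b)]{rockafellar2009variational} where you use the stability of Robinson's CQ from \cref{prop1-11}(i), which is the same content), then apply the amenable chain rule \cite[Theorem 10.6 and Exercise 10.25(a)]{rockafellar2009variational} for (i) and \cite[Proposition 13.32]{rockafellar2009variational} for (ii). One cosmetic point: nearby points $x$ need not satisfy $F(x)=0$, so $\vp$ is not literally $C^2$-decomposable at each $x\in U$ in the sense of \cref{def:decomp}; what persists (and is all you use) is strong amenability at each such $x$.
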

\begin{proof}
    By \cref{fact:structure} and \cite[Exercise 10.25 (b)]{rockafellar2009variational}, there is a neighborhood $U$ of $x$ such that $\vp$ is strongly amenable on $U\cap\dom(\vp)$. We can then apply \cite[Exercise 10.25 (a) and Theorem 10.6]{rockafellar2009variational} to get the desired chain rule for every $x\in U\cap \dom(\vp)$. The statements in (ii) follow from \cite[Proposition 13.32]{rockafellar2009variational}. 
\end{proof}
 
Following the discussion in \Cref{sec:basicprox}, \cref{prop4-4}~(ii) implies that the $C^2$-decomposable mapping $\vp$ is globally prox-regular (at $\bar x$) if $\vp$ is prox-bounded. 

Strict decomposability allows a finer characterization of the subdifferential of $\vp$. Specifically, it implies that the set 
\[ \Lambda(\bar x,\bar v) := \{\lambda\in\partial\vp_d(F(\bar x)): DF(\bar x)^\top\lambda=\bar v\}, \quad \bar v \in \partial \vp(\bar x), \] 
is a singleton and the set-valued mapping $\Lambda: \Rn \times \Rn \rightrightarrows \Rm$ is outer Lipschitz continuous around $(\bar x,\bar v)$. 

\begin{prop} \label{prop:prop-strict} Let $\vp$ be $C^2$-strictly decomposable at $\bar x$ for $\bar \lambda\in \pvpd$ and $\bar v=DF(\bar x)^\top \bar\lambda$. Then, it holds that:
 \begin{enumerate}[label=\textup{\textrm{(\roman*)}},topsep=0pt,itemsep=0ex,partopsep=0ex]
      \item The point $\bar\lambda$ is the only element in $\pvpd$ satisfying $DF(\bar x)^\top \lambda=\bar v$, i.e., we have $\Lambda(\bar x,\bar v) = \{\bar\lambda\}$.
      \item The mapping $\Lambda: \Rn \times \Rn \rightrightarrows \Rm$ is outer Lipschitz continuous at $(\bar x,\bar v)$ relative to $\gph(\partial\vp)$.
    \end{enumerate}
\end{prop}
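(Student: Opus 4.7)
The plan is to reduce the strict condition \cref{eq:strict-cq} to an intersection property that drives both parts. Since $\vp_d = \sigma_\pvpd$ and $F(\bar x) = 0$, we have $\partial\vp_d(F(\bar x)) = \pvpd$ and hence $\Lambda(\bar x,\bar v) = \{\lambda \in \pvpd : DF(\bar x)^\top\lambda = \bar v\}$. Applying \cref{nprop4-3} to the closed convex cones $DF(\bar x)\Rn$ and $-N_\pvpd(\bar\lambda)$, and using $(DF(\bar x)\Rn)^\circ = \ker DF(\bar x)^\top$ together with $N_\pvpd(\bar\lambda)^\circ = T_\pvpd(\bar\lambda)$, the strict condition \cref{eq:strict-cq} turns out to be equivalent to
\[
\ker DF(\bar x)^\top \cap T_\pvpd(\bar\lambda) = \{0\}
\]
(the sign in $-T_\pvpd(\bar\lambda)$ is immaterial since $\ker DF(\bar x)^\top$ is a linear subspace).

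For (i), any $\lambda \in \pvpd$ with $DF(\bar x)^\top\lambda = \bar v$ satisfies $\lambda - \bar\lambda \in \ker DF(\bar x)^\top$, and the convexity of $\pvpd$ places $\lambda - \bar\lambda$ in $T_\pvpd(\bar\lambda)$. The intersection property above then forces $\lambda = \bar\lambda$.

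For (ii), we argue by contradiction: assume sequences $(x^k,v^k) \to (\bar x,\bar v)$ in $\gph(\partial\vp)$ and $\lambda^k \in \Lambda(x^k,v^k)$ exist with $\|\lambda^k - \bar\lambda\| \geq k \, \|(x^k,v^k) - (\bar x,\bar v)\|$, and split into two cases. If $\{\lambda^k\}$ is bounded, a subsequential limit $\lambda^*$ inherits $\lambda^* \in \pvpd$ and $DF(\bar x)^\top\lambda^* = \bar v$, so $\lambda^* = \bar\lambda$ by (i). Setting $t_k := \|\lambda^k - \bar\lambda\| \downarrow 0$ and $d^k := (\lambda^k - \bar\lambda)/t_k$, a subsequential limit $d$ has $\|d\| = 1$, lies in $T_\pvpd(\bar\lambda)$ (since $d^k \in (\pvpd - \bar\lambda)/t_k$ and $t_k \downarrow 0$), and satisfies $DF(\bar x)^\top d = 0$ because
\[
DF(x^k)^\top(\lambda^k - \bar\lambda) = (v^k - \bar v) + (DF(\bar x) - DF(x^k))^\top \bar\lambda = o(t_k),
\]
using $\|(x^k,v^k) - (\bar x,\bar v)\| = o(t_k)$. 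This contradicts the intersection property. If instead $\|\lambda^k\| \to \infty$, normalizing $e^k := \lambda^k / \|\lambda^k\|$ yields a unit limit $e \in 0^+\pvpd \cap \ker DF(\bar x)^\top$; since $0^+\pvpd \subseteq T_\pvpd(\bar\lambda)$ (as $\bar\lambda + t e \in \pvpd$ for every $t \geq 0$), this is again a contradiction.

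The main obstacle is the polar translation of \cref{eq:strict-cq} into the above intersection property together with the scale bookkeeping in the bounded-case contradiction; the rest reduces to standard tangent- and recession-cone manipulations for closed convex sets.
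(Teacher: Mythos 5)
Your proof is correct and follows essentially the route the paper takes: the paper gives no written-out argument but delegates to \cite[Proposition 4.47]{bonnans2013perturbation} and \cite[Proposition 2.2]{hang2024chain}, and the mechanism underlying those references is precisely your polar reformulation of \cref{eq:strict-cq} via \cref{nprop4-3} into $\ker(DF(\bar x)^\top)\cap T_{\pvpd}(\bar\lambda)=\{0\}$ (the same reformulation the paper itself uses later in the proof of \cref{prop:strct_full}), followed by a normalization/compactness contradiction split over the bounded and unbounded cases. The only cosmetic point: the negation of outer Lipschitz continuity yields the strict inequality $\|\lambda^k-\bar\lambda\| > k\,\|(x^k,v^k)-(\bar x,\bar v)\|$, which is what guarantees $t_k>0$ before you normalize.
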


As mentioned, condition \cref{eq:strict-cq} can be interpreted as a variant of the strict (Robinson) constraint qualification. \cref{prop:prop-strict} then essentially follows from \cite[Proposition 4.47]{bonnans2013perturbation} and its derivation is similar to \cite[Proposition 2.2]{hang2024chain}.

We now discuss the twice epi-differentiability of $C^2$-decomposable functions and provide a characterization of the second subderivative. For $C^2$-fully decomposable functions, the results in \cref{prop:soc:decomp-subquad} first appeared in the second author's thesis \cite[Lemma 5.3.27]{milzarek2016numerical}. The generalization to $C^2$-decomposable functions is possible due to novel techniques proposed in \cite{mohammadi2020twice} that is based on parabolic regularity. A sketch of the proof can be found in \cite[Remark 5.6 (d)]{mohammadi2020twice}. Although the derivation in \cite{mohammadi2020twice} is stated for $C^2$-fully decomposable functions, the constraint qualification \cite[Equation (4.6)]{mohammadi2020twice} is equivalent to \cref{eq:robinson}, and thus it is also applicable for $C^2$-decomposable functions.

\begin{prop} \label{prop:soc:decomp-subquad} Suppose that $\vp$ is $C^2$-decomposable at $\bar x$ with decomposition pair $(\vp_d,F)$. Then, $\vp$ is properly twice epi-differentiable at $\bar x$ relative to $\bar v\in\partial\vp(\bar x)$ and the second order subderivative of $\vp$ at $\bar x$ is given by
\be    \rd^2\vp(\bar x|\bar v)(h)=\max_{\lambda \in \Lambda(\bar x,\bar v)}~\iprod{\lambda}{D^2F(\bar x)[h,h]}+\iota_{\mathcal{C}(\bar x,\bar v)}(h) \quad \forall~h \in \Rn, \ee
where $\Lambda(\bar x,\bar v) = \{\lambda\in\partial\vp_d(F(\bar x)): DF(\bar x)^\top\lambda=\bar v\}$ and the set $\mathcal C{(\bar x,\bar v)}$ is the critical cone $\mathcal C{(\bar x,\bar v)}:=\{h\in\R^n:\rd\vp(\bar x)(h)=\langle \bar v,h \rangle\}$.
\end{prop}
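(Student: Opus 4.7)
The plan is to follow the parabolic regularity framework of Mohammadi--Mohammadi--Sarabi, since the sublinearity of $\vp_d$ makes the outer function particularly well-behaved, and Robinson's CQ \cref{eq:robinson} will provide exactly the transfer mechanism needed. The argument splits naturally into four steps: (a) compute the first-order subderivative and identify the critical cone; (b) compute the parabolic subderivative of $\vp_d$; (c) transfer everything through the $C^2$ inner map $F$; and (d) dualize to obtain the stated max formula.

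First I would use \cref{prop4-4}(i) together with \cite[Proposition 10.6]{rockafellar2009variational}-style chain rules to get $\rd\vp(\bar x)(h) = \vp_d(DF(\bar x)h)$, which confirms that $\mathcal C(\bar x,\bar v) = \{h : \vp_d(DF(\bar x)h)=\langle \bar v,h\rangle\}$ is indeed the critical cone, and also that any $h \in \mathcal C(\bar x,\bar v)$ satisfies $DF(\bar x)h \in \dom(\vp_d)$. Next, since $\vp_d = \sigma_\pvpd$ is sublinear, proper, lsc, and convex, it is parabolically epi-differentiable at every point of its domain, with parabolic subderivative
\[
\d^2\vp_d(y)(u;w) = \sigma_{\partial\vp_d(y) \cap \{\lambda : \langle \lambda,u\rangle = \vp_d(u)\}\text{-face}}(w),
\]
after suitable care; in particular, along any direction $u$ with $\vp_d(u) = \langle v,u\rangle$ for some $v \in \partial \vp_d(y)$, the function $w \mapsto \d^2\vp_d(y)(u;w)$ is itself a support function. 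Moreover, convex sublinear functions are parabolically regular, which is the crucial property that will be inherited.

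For step (c), I would invoke the parabolic chain rule under Robinson's CQ, as developed in \cite[Section 4]{mohammadi2020twice}, to conclude that $\vp$ is parabolically epi-differentiable at $\bar x$ in every critical direction $h \in \mathcal C(\bar x,\bar v)$, with parabolic subderivative given by
\[
\d^2\vp(\bar x)(h;w) = \d^2\vp_d(F(\bar x))\!\left(DF(\bar x)h;\, DF(\bar x)w + D^2F(\bar x)[h,h]\right),
\]
and, simultaneously, that parabolic regularity of $\vp_d$ propagates to $\vp$ because the CQ forces the infimum in the parabolic subderivative to be attained. Combined with \cite[Theorem 5.4]{mohammadi2020twice}, this yields both the proper twice epi-differentiability of $\vp$ at $\bar x$ for $\bar v$ and the representation
\[
\rd^2\vp(\bar x|\bar v)(h) = \inf_{w\in\R^n}\bigl\{\d^2\vp(\bar x)(h;w) - \langle \bar v, w\rangle\bigr\} + \iota_{\mathcal C(\bar x,\bar v)}(h).
\]

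Finally, for the max formula, I would apply Fenchel--Rockafellar duality to the inner infimum. Substituting the support function form of the parabolic subderivative of $\vp_d$ and using that $\vp_d(DF(\bar x)h) = \langle \bar v, h\rangle$ reduces the active set in $\partial\vp_d(F(\bar x))$ to exactly the multiplier set $\Lambda(\bar x,\bar v)$, the dual problem becomes
\[
\max_{\lambda \in \Lambda(\bar x,\bar v)}~\langle \lambda,\, D^2F(\bar x)[h,h]\rangle.
\]
Strong duality here is ensured by Robinson's CQ via \cref{nprop4-3} applied to the cones $DF(\bar x)\R^n$ and $-T_{\dom\vp_d}(DF(\bar x)h)$. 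The main obstacle is this last duality step: verifying that the qualification condition needed for no duality gap follows from \cref{eq:robinson} rather than from the stronger nondegeneracy condition \cref{eq:nondeg-cq}, since the set $\Lambda(\bar x,\bar v)$ need not be a singleton here. This is exactly the point where parabolic regularity (rather than partial smoothness) must be used, and where the analysis genuinely goes beyond the $C^2$-fully decomposable case treated in \cite{milzarek2016numerical}.
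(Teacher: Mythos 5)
Your proposal is correct and follows essentially the same route the paper takes: the paper does not write out a proof but delegates it to the parabolic-regularity framework of \cite[Remark 5.6(d)]{mohammadi2020twice} (and \cite[Lemma 5.3.27]{milzarek2016numerical} for the fully decomposable case), observing exactly as you do that the constraint qualification there is equivalent to Robinson's CQ \cref{eq:robinson}, so the chain rule and the Fenchel--Rockafellar dualization of the inner infimum go through without nondegeneracy. Your only cosmetic imprecision is the appeal to \cref{nprop4-3} for strong duality; the cleaner verification is that $\dom(\vp_d)\subseteq\dom\,\d^2\vp_d(0)(DF(\bar x)h;\cdot)$, so \cref{eq:robinson} directly yields the interiority condition required for a zero duality gap.
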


\subsection{Chain Rules for Strictly Decomposable Functions}
\label{sec:full-chain}

Our primary goal in the following sections is to study and discuss \emph{strict} twice epi-differentiability of decomposable functions. Strict twice epi-differentiability is a stronger variational second-order property that was first introduced in \cite{rockafellar1995second,PolRoc96}. In particular, the mapping $\vp$ is said to be strictly twice epi-differentiable at $\bar x$ for $\bar v \in \partial \vp(\bar x)$ if the difference quotients $\Delta_t^2  \;\! \vp(x|v)$ epi-converge to a function as $t \downarrow 0$, $\gph(\partial \vp) \ni (x,v) \to (\bar x,\bar v)$ with $\vp(x) \to \vp(\bar x)$. The strict (lower) second-order subderivative is then given by
\[ \mathrm{d}^2_s\vp(\bar x|\bar v)(h) := \liminf_{\begin{subarray}{c} t\downarrow 0, \, \tilde h \to h, \, \vp(x) \to \vp(\bar x), \\ \gph(\partial \vp) \ni (x,v) \to(\bar x,\bar v) \end{subarray}}~\Delta_t^2  \;\! \vp(x|v)(\tilde h). \]
Since decomposable functions are subdifferentially continuous (cf. \cref{prop4-4}), we will often drop the condition $\vp(x) \to \vp(\bar x)$ in the definition of the strict second subderivative. Computational results on strict epi-differentiability are generally rare and limited to max-type functions \cite{PolRoc96}, polynomial functions \cite{hang2024chain}, and polynomial composite-type problems \cite{hang2024chain}. Motivated by the recent and pioneering observations in \cite{hang2024chain}, we now investigate strict twice epi-differentiability for a class of strictly decomposable functions which allows covering problems with non-polyhedral structures. 
Different from the results in \cref{prop:soc:decomp-subquad}, strict epi-differentiability relies on the behavior of $\vp$ in a neighborhood of $(\bar x,\bar v)$. As a first preparatory step, we present a general chain rule-type result for the strict second subderivative of a strictly decomposable function.


\begin{thm}
  \label{thm-strict-chain}
  Let $\vp$ be $C^2$-strictly decomposable at $\bar x$ for $\bar \lambda$ with decomposition pair $(\vp_d, F)\equiv(\sigma_{\pvpd},F)$. It holds that
  \begin{equation} \label{eq:strict-chain}
  \rd^2_s\vp(\bar x|DF(\bar x)^\top\bar \lambda)(h)\geq\rd^2_s\vp_d(0|\bar\lambda)(DF(\bar x)h)+\langle \bar \lambda,D^2F(\bar x)[h,h]\rangle \quad \forall~h \in \Rn.
  \end{equation}
  Furthermore, if $h\in  DF(\bar x)^{-1}\aff(N_{\pvpd}(\bar\lambda))$, then \cref{eq:strict-chain} is satisfied with equality and we have $\rd^2_s\vp_d(0|\bar\lambda)(DF(\bar x)h) = 0$.
\end{thm}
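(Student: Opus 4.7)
The plan is to reduce the strict second subderivative of $\vp = \vp(\bar x) + \vp_d\circ F$ to that of the sublinear outer function $\vp_d$ via a chain-rule-type expansion, and then to exploit the linearity of $\vp_d$ along rays in $N_\pvpd(\bar\lambda)$ where $\bar\lambda$ is a subgradient.

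For the lower bound \cref{eq:strict-chain}, I would take arbitrary sequences $(x_k,v_k)\to(\bar x,\bar v)$ in $\gph(\partial\vp)$, $\tilde h_k\to h$, $t_k\downarrow 0$, and select $\lambda_k\in\Lambda(x_k,v_k)$. By \cref{prop:prop-strict}, $\Lambda(\bar x,\bar v)=\{\bar\lambda\}$ and $\Lambda$ is outer Lipschitz, forcing $\lambda_k\to\bar\lambda$. Setting $\eta_k := (F(x_k+t_k\tilde h_k)-F(x_k))/t_k = DF(x_k)\tilde h_k+\tfrac{t_k}{2}D^2F(x_k)[\tilde h_k,\tilde h_k]+o(t_k)$, which tends to $DF(\bar x)h$, and using $v_k=DF(x_k)^\top\lambda_k$ together with $\vp-\vp(\bar x)=\vp_d\circ F$, a direct calculation yields
\[
\Delta_{t_k}^2\vp(x_k|v_k)(\tilde h_k) = \Delta_{t_k}^2\vp_d(F(x_k)|\lambda_k)(\eta_k) + \langle\lambda_k,D^2F(x_k)[\tilde h_k,\tilde h_k]\rangle + o(1).
\]
Subdifferential continuity from \cref{prop4-4} forces $\vp_d(F(x_k))\to 0$, so $(F(x_k),\lambda_k)\to(0,\bar\lambda)$ is admissible in the definition of $\rd^2_s\vp_d(0|\bar\lambda)(DF(\bar x)h)$, and taking the liminf delivers \cref{eq:strict-chain}.

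For the second claim, $\rd^2_s\vp_d(0|\bar\lambda)(w)=0$ whenever $w\in\aff(N_\pvpd(\bar\lambda))$, convexity of $\vp_d$ yields the trivial estimate $\rd^2_s\vp_d(0|\bar\lambda)\geq 0$. Since $N_\pvpd(\bar\lambda)$ is a convex cone, $\aff(N_\pvpd(\bar\lambda))=N_\pvpd(\bar\lambda)-N_\pvpd(\bar\lambda)$; decomposing $w=w_+-w_-$ with $w_\pm\in N_\pvpd(\bar\lambda)$ and using the identification $N_\pvpd(\bar\lambda)=\{y:\bar\lambda\in\partial\vp_d(y)\}$ together with positive homogeneity, one gets $\bar\lambda\in\partial\vp_d(t_k w_\pm)$ for $t_k>0$. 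Taking $y_k=t_k w_-$, $\mu_k=\bar\lambda$, $\omega_k=w$, we have $(y_k,\mu_k)\in\gph(\partial\vp_d)$, $y_k+t_k\omega_k=t_k w_+$, and the Fenchel identity $\vp_d(y)=\langle\bar\lambda,y\rangle$ on $N_\pvpd(\bar\lambda)$ collapses $\Delta_{t_k}^2\vp_d(y_k|\mu_k)(\omega_k)$ to zero identically.

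The main obstacle is establishing the matching $\leq$-inequality on the $\vp$ side. Using the strict CQ $\Rm=DF(\bar x)\Rn-N_\pvpd(\bar\lambda)$, I would shift the decomposition $DF(\bar x)h=w_+-w_-$ by a common element of $N_\pvpd(\bar\lambda)$ so that $w_\pm\in N_\pvpd(\bar\lambda)\cap\cR(DF(\bar x))$. Picking $\xi_-$ with $DF(\bar x)\xi_-=w_-$ and setting $\xi_+:=h+\xi_-$ forces $DF(\bar x)\xi_\pm=w_\pm\in N_\pvpd(\bar\lambda)$, so $\xi_\pm\in\mathcal{C}(\bar x,\bar v)=DF(\bar x)^{-1}(N_\pvpd(\bar\lambda))$. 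By the standard tangent cone characterization under Robinson's CQ (applicable because $\Rm=DF(\bar x)\Rn-N_\pvpd(\bar\lambda)$ is Robinson's CQ for the constraint $F(x)\in N_\pvpd(\bar\lambda)$), $\mathcal{C}(\bar x,\bar v)$ coincides with the tangent cone at $\bar x$ of the feasibility set $S := \{x:F(x)\in N_\pvpd(\bar\lambda)\}$, and Robinson-type metric regularity upgrades this to the existence of sequences $x_k,\tilde x_k\in S$ with $x_k = \bar x + t_k\xi_- + O(t_k^2)$ and $\tilde x_k = \bar x + t_k\xi_+ + O(t_k^2)$. Putting $v_k:=DF(x_k)^\top\bar\lambda\in\partial\vp(x_k)$ and $\tilde h_k:=(\tilde x_k-x_k)/t_k\to h$, the linearity $\vp_d|_{N_\pvpd(\bar\lambda)}=\langle\bar\lambda,\cdot\rangle$ combined with the Taylor expansion $F(\tilde x_k)-F(x_k)=t_k DF(x_k)\tilde h_k+\tfrac{t_k^2}{2}D^2F(x_k)[\tilde h_k,\tilde h_k]+o(t_k^2)$ collapses $\Delta_{t_k}^2\vp(x_k|v_k)(\tilde h_k)$ to $\langle\bar\lambda,D^2F(x_k)[\tilde h_k,\tilde h_k]\rangle+o(1)\to\langle\bar\lambda,D^2F(\bar x)[h,h]\rangle$. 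The chief technical hurdle is to secure $x_k,\tilde x_k\in S$ \emph{exactly} (so that $v_k\in\partial\vp(x_k)$), not merely to first order; this is precisely where the strict CQ is indispensable, both for the existence of $\xi_\pm$ and for the metric-regularity/tangent-cone machinery that realizes the admissible sequences.
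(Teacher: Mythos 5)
Your proposal is correct. The first half (the lower bound \cref{eq:strict-chain}) is essentially the paper's argument: write $v_k=DF(x_k)^\top\lambda_k$ via the chain rule of \cref{prop4-4}, use \cref{prop:prop-strict} to force $\lambda_k\to\bar\lambda$, and split the difference quotient into a $\vp_d$-quotient at $(F(x_k),\lambda_k)$ along $\eta_k\to DF(\bar x)h$ plus a curvature term. Where you genuinely diverge is the equality part. The paper invokes \cref{exist_ri} to produce base points $x^k$ with $F(x^k)\in\ri(N_{\pvpd}(\bar\lambda))$ and directions $h^k\in DF(x^k)^{-1}\aff(N_{\pvpd}(\bar\lambda))=T_K(x^k)$ for $K=\{x:F(x)\in N_{\pvpd}(\bar\lambda)\}$, realizes feasible steps $x^k+t_kh^k\in K$ via tangent-cone derivability at each $x^k$, and only then extracts $\rd^2_s\vp_d(0|\bar\lambda)(DF(\bar x)h)=0$ from the resulting sandwich. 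You instead (a) prove $\rd^2_s\sigma_{\pvpd}(0|\bar\lambda)\equiv 0$ on $\aff(N_{\pvpd}(\bar\lambda))$ directly through the explicit admissible family $y_k=t_kw_-$, $\mu_k=\bar\lambda$, $\omega_k=w$, which collapses the quotient identically to zero --- a short, self-contained computation the paper does not give --- and (b) obtain the matching upper bound by decomposing $DF(\bar x)h=w_+-w_-$ with $w_\pm\in N_{\pvpd}(\bar\lambda)\cap\cR(DF(\bar x))$ (the common shift exists because the strict qualification applied to $w_-$ produces $n\in N_{\pvpd}(\bar\lambda)$ with $w_-+n\in\cR(DF(\bar x))$) and then projecting $\bar x+t_k\xi_\pm$ exactly onto $K$ via the Robinson error bound $\dist(x,K)\leq\kappa\,\dist(F(x),N_{\pvpd}(\bar\lambda))$, which yields $\dist(\bar x+t_k\xi_\pm,K)=O(t_k^2)$ since $DF(\bar x)\xi_\pm\in N_{\pvpd}(\bar\lambda)$. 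Your route keeps both endpoints $x_k$ and $x_k+t_k\tilde h_k$ exactly feasible, so $DF(x_k)^\top\bar\lambda\in\partial\vp(x_k)$ and $\vp_d\circ F$ is linear at both points, and it avoids the relative-interior perturbation and the double limit over $k$ and $i$ in the paper; the paper's route buys the same conclusion packaged in the single lemma \cref{exist_ri}. Both are valid; just make sure to state the uniform $o(t_k^2)$ in the Taylor expansion of $F$ along the bounded directions $\tilde h_k$ when passing to the limit.
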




\cref{thm-strict-chain} is a strict variant of the chain rule provided in \cite[Theorem 4.1]{benko2022second}. In particular, the special structure of $\vp_d$ can be leveraged to establish \cref{eq:strict-chain} without requiring the full rank assumption used in \cite[Corollary 4.3]{benko2022second}. The chain rule in \cref{thm-strict-chain} allows us to fully characterize the strict twice epi-differentiability of $C^2$-strictly decomposable functions. Moreover, \cref{eq:strict-chain} is the basis of a general computational formula for the strict second subderivative which will be discussed in detail in \Cref{sec:uniform-d}. We start the derivation of \cref{thm-strict-chain} with a technical result.


\begin{prop}
  \label{exist_ri} Let $F : \Rn \to \Rm$ be continuously differentiable and let $\bar x \in \Rn$ be given with $F(\bar x) = 0$. Suppose that $DF(\bar x)\R^n-\cN=\R^m$ for some closed convex cone $\cN$. Then, there is a sequence $\{x^k\}_k$ with $x^k\to\bar x$, $F(x^k)\in \ri(\cN)$ (for all $k$), and $DF(x^k)^{-1}\aff(\cN)\to DF(\bar x)^{-1}\aff(\cN)$.
\end{prop}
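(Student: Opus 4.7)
The plan is to split the proof into two parts: (A) construct a sequence $x^k\to\bar x$ with $F(x^k)\in\ri(\cN)$, and (B) show $DF(x^k)^{-1}\aff(\cN)\to DF(\bar x)^{-1}\aff(\cN)$. Part (A) will invoke Robinson's stability theorem together with a short accessibility argument for the relative interior, while Part (B) reduces to a constant-rank calculation based on the Moore--Penrose pseudoinverse of a suitable projected Jacobian.

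For Part (A), I would first fix any $\hat n\in\ri(\cN)$ (nonempty, and itself a convex cone, so $t\hat n\in\ri(\cN)$ for all $t>0$). The hypothesis $DF(\bar x)\R^n-\cN=\R^m$ is Robinson's constraint qualification for the inclusion $F(x)\in\cN$ at $\bar x$, since the convex cone $DF(\bar x)\R^n-\cN$ contains $0$ in its interior iff it equals $\R^m$. By Robinson's stability theorem \cite{bonnans2013perturbation}, the multifunction $\Phi(x):=F(x)-\cN$ is metrically regular at $(\bar x,0)$, i.e., there is $c>0$ with $\dist(x,\Phi^{-1}(y))\leq c\,\dist(y,\Phi(x))$ on a neighborhood of $(\bar x,0)$. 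Applying this with $x=\bar x$ and $y=t_k\hat n$, $t_k\downarrow 0$, yields $\dist(\bar x,\Phi^{-1}(t_k\hat n))\leq c\,\dist(t_k\hat n,-\cN)\leq c\,t_k\|\hat n\|$, so one can select $x^k\to\bar x$ with $F(x^k)-t_k\hat n\in\cN$, i.e., $F(x^k)\in t_k\hat n+\cN$. The accessibility relation $\ri(\cN)+\cN\subseteq\ri(\cN)$ for convex cones (which follows because $\aff(\cN)$ is a linear subspace and $\cN+\cN=\cN$) then forces $F(x^k)\in\ri(\cN)$.

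For Part (B), projecting $DF(\bar x)\R^n-\cN=\R^m$ onto $\aff(\cN)^\perp$ and using $\cN\subseteq\aff(\cN)$ (hence $\Pi_{\aff(\cN)^\perp}\cN=\{0\}$) shows that the operator $A(x):=\Pi_{\aff(\cN)^\perp}DF(x):\R^n\to\aff(\cN)^\perp$ is surjective at $x=\bar x$, i.e., has rank $r:=\dim\aff(\cN)^\perp$. By continuity of $DF$, lower semicontinuity of the rank, and the trivial upper bound $\mathrm{rank}\,A(x)\leq r$, the rank of $A(x)$ equals $r$ on a neighborhood $U$ of $\bar x$. Hence, for $x\in U$, $A(x)A(x)^\top$ is invertible, the Moore--Penrose pseudoinverse $A(x)^\dagger:=A(x)^\top(A(x)A(x)^\top)^{-1}$ depends continuously on $x$, and so does the orthogonal projection $P(x):=I-A(x)^\dagger A(x)$ onto $\ker A(x)=DF(x)^{-1}\aff(\cN)$. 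Pointwise convergence $P(x^k)\to P(\bar x)$ (combined with $\|P(x^k)\|=1$) then translates into Painlev\'{e}--Kuratowski convergence of their ranges, yielding $DF(x^k)^{-1}\aff(\cN)\to DF(\bar x)^{-1}\aff(\cN)$.

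The main technical hurdle is Part (A): invoking Robinson's stability theorem correctly and noticing that the perturbed inclusion $F(x^k)\in t_k\hat n+\cN$ places $F(x^k)$ in the relative interior (and not merely on the boundary) of $\cN$ -- this is exactly what the accessibility lemma $\ri(\cN)+\cN\subseteq\ri(\cN)$ is designed for. The constant-rank argument and the pseudoinverse-based continuity of the kernel projection in Part (B) are routine once the surjectivity of $A(\bar x)$ has been extracted from the hypothesis.
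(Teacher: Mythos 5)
Your proof is correct, but both halves take a genuinely different route from the paper. For the existence of $x^k$ with $F(x^k)\in\ri(\cN)$, the paper first shows $DF(\bar x)\R^n\cap\ri(\cN)\neq\emptyset$, picks $h$ with $DF(\bar x)h\in\ri(\cN)$, invokes the tangent-cone formula $T_K(\bar x)=DF(\bar x)^{-1}\cN$ for $K=\{x:F(x)\in\cN\}$ (via \cite[Corollary 2.91]{bonnans2013perturbation}) to produce $x^k=\bar x+t_kh^k\in K$, and then uses a Taylor expansion to push $F(x^k)$ from $\cN$ into $\ri(\cN)$; you instead perturb the target to $t_k\hat n$ with $\hat n\in\ri(\cN)$ and let metric regularity of $x\mapsto F(x)-\cN$ (Robinson's stability theorem, also from \cite{bonnans2013perturbation}) hand you $x^k$ with $F(x^k)\in t_k\hat n+\cN\subseteq\ri(\cN)$ directly. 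Your version is arguably cleaner since it avoids the Taylor-expansion bookkeeping and the preliminary step of locating $DF(\bar x)\R^n\cap\ri(\cN)$, at the cost of invoking the (slightly heavier) metric regularity statement rather than just the tangent-cone formula. For the convergence of the preimages, the paper cites \cite[Theorem 4.32]{rockafellar2009variational}, which gives $DF(x^k)^{-1}\aff(\cN)\to DF(\bar x)^{-1}\aff(\cN)$ once $DF(\bar x)\R^n$ and $\aff(\cN)$ cannot be separated (immediate from $DF(\bar x)\R^n-\cN=\R^m$); your constant-rank/pseudoinverse computation with $A(x)=\Pi_{\aff(\cN)^\perp}DF(x)$ is an elementary, self-contained substitute that proves the same convergence via continuity of the orthogonal projections onto $\ker A(x)$. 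Both arguments are sound; the only cosmetic weakness is your one-line justification of $\ri(\cN)+\cN\subseteq\ri(\cN)$, which is better supported by the standard line-segment principle $\tfrac12\ri(\cN)+\tfrac12\cN\subseteq\ri(\cN)$ together with positive homogeneity, as the paper does.
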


\begin{proof}
  We first argue that $DF(\bar x)\R^n\cap \ri(\cN)\neq \emptyset$. Let $q\in\ri(\cN)$ be arbitrary. By assumption, there exists $w\in \R^n$ and $v\in \cN$ such that $DF(\bar x)w-v=q$. Since $\cN$ is a convex cone, it holds that $\ri(\cN)+\cN=\ri(\cN)$ (this follows from $\frac12 \ri(\cN)+\frac12 \cN \subseteq \ri(\cN) $ and $0 \in \cN$, cf. \cite[Theorem 6.1]{rockafellar1970convex}). This implies $DF(\bar x)w\in \ri(\cN)$. Next, we consider the set $K:=\{x\in\R^n:F(x)\in \cN\}$. The conditions $F(\bar x) = 0$ and $DF(\bar x)\Rn-\cN = \Rm$ imply that Robinson's constraint qualification (cf. \cite[Equation (2.166)]{bonnans2013perturbation}) holds for $K$ at $\bar x$. Hence, we can apply \cite[Corollary 2.91]{bonnans2013perturbation} to obtain $T_{K}(\bar x)=DF(\bar x)^{-1}\cN$. By the previous argument, we may select $h\in \R^n$ with $DF(\bar x)h\in \ri(\cN)$. Then, we have $h\in T_{K}(\bar x)$ and there are $t_k\downarrow 0$ and $h^k\to h$ such that $x^k:=\bar x+t_kh^k\in K$, i.e., $F(x^k)\in \cN$. Applying Taylor expansion, it follows:
  \[ F(x^k)=F(\bar x)+t_kDF(\bar x)h^k+t_kq^k = t_k(DF(\bar x)h+q^k),\quad q^k\to 0.     \]
  Since $F(x^k)\in \cN$, we can infer $q^k=DF(\bar x)h-F(x^k)/t_k\in \aff(\cN)$. Combining this with $DF(\bar x)h\in \ri(\cN)$ and $q^k\to 0$, it holds that $F(x^k)\in \ri(\cN)$ for all sufficiently large $k$. So we may assume $F(x^k)\in \ri(\cN)$ for all $k\in\n$. The result $DF(x^k)^{-1}\aff(\cN)\to DF(\bar x)^{-1}\aff(\cN)$ follows from \cite[Theorem 4.32]{rockafellar2009variational} if $DF(\bar x)\R^n$ can not be separated from $\aff(\cN)$. This, however, is immediate from $DF(\bar x)\Rn-\cN = \Rm$.
\end{proof}

\begin{proof}[Proof of \cref{thm-strict-chain}]
By definition, we can select $x^k\to\bar  x,~\partial\vp(x^k)\ni v^k\to DF(\bar x)^\top\bar\lambda,~t_k\downarrow 0$, and $h^k\to h$, such that 
  \[  \rd^2_s\vp(\bar x|DF(\bar x)^\top \bar\lambda)(h)={\lim}_{k\rightarrow 0}~\Delta_{t_k}^2  \;\! \vp(x^k|v^k)(h^k).
  \]
By \cref{prop4-4} (i), we can express $v^k$ as $v^k=DF(x^k)^\top\lambda^k$ with $\lambda^k\in\partial\vp_d(F(x^k)) $, i.e., $\lambda^k \in \Lambda(x^k,v^k)$. According to \cref{prop:prop-strict}, we know that $\lambda^k\to\bar\lambda$. Therefore, setting $w^k := (F(x^k+t_kh^k)-F(x^k))/t_k$, it holds that
  \begin{align*}
    & \hspace{-2ex} \rd^2_s\vp(\bar x|DF(\bar x)^\top\bar\lambda)(h) \\ 
     & = \lim_{k\rightarrow 0}\frac{\vp_d(F(x^k+t_kh^k))-\vp_d(F(x^k))-t_k\langle \lambda^k,w^k\rangle}{\frac{1}{2}t_k^2} + \frac{\iprod{\lambda^k}{w^k-DF(x^k)h^k}}{\frac12 t_k} \\
     & \geq \rd^2_s\vp_d(0|\bar\lambda)(DF(\bar x)h)+\langle \bar\lambda,D^2F(\bar x)[h,h]\rangle,
   \end{align*}
where we used $F(\bar x) = 0$,  $w^k \to DF(\bar x)h$, and the definition of $\rd^2_s\vp_d(0|\bar\lambda)$.   

Next, let us assume $h\in DF(\bar x)^{-1}\aff(N_{\pvpd}(\bar\lambda))$. By \cref{exist_ri}, we may select $x^k\to \bar x$ with $F(x^k)\in \ri(N_{\pvpd}(\bar\lambda))$ and $h^k\to h$ with $h^k\in DF(x^k)^{-1}\aff(N_{\pvpd}(\bar\lambda))$. Let us set $\cN := N_{\pvpd}(\bar\lambda)$. By \cref{prop1-11} (ii), we can take a sufficiently small neighborhood $U$ of $\bar x$ such that $DF(x)\R^n-\cN=\R^m$ holds for all $x\in U$, which means we may assume $DF(x^k)\R^n-\cN =\R^m$ for all $k\in\n$. We define $K:=\{x\in\R^n:F(x)\in \cN\}$. Applying \cite[Corollary 2.91]{bonnans2013perturbation}, it follows $T_K(x^k) = DF(x^k)^{-1}T_{\mathcal N}(F(x^k)) = DF(x^k)^{-1} \aff(\cN)$, where the second equality is due to $F(x^k) \in \ri(\cN)$, \cite[Section A.5.3]{hiriart2004fundamentals}. Hence, we can infer $h^k \in T_K(x^k)$ and there is $t_{k,i}\downarrow 0$ as $i\to\infty$ such that $F(x^k+t_{k,i}h^k)\in \cN = N_{\pvpd}(\bar\lambda)$. In particular, we may select $t_k\downarrow 0$ with $F(x^k+t_kh^k)\in \cN$. Applying \cite[Example 11.4]{rockafellar2009variational}, we see that $\bar\lambda\in \partial\sigma_\pvpd(F(x^k))$, $\sigma_\pvpd(F(x^k))=\langle \bar\lambda, F(x^k) \rangle$, $\sigma_\pvpd(F(x^k+t_kh^k))=\langle \bar\lambda, F(x^k+t_kh^k) \rangle$ and thus, by \cref{prop4-4}, we have $DF(x^k)^\top\bar\lambda\in \partial\vp(x^k)$. Therefore, by the definition of strict second subderivative, we obtain
\begin{align*}
  &\rd^2_s\vp(\bar x|DF(\bar x)^\top \bar\lambda)(h)\leq {\lim}_{k\rightarrow 0}~\Delta_{t_k}^2  \vp(x^k|DF(x^k)^\top\bar\lambda)(h^k). \\&=\lim_{k\rightarrow 0}\frac{\vp_d(F(x^k+t_kh^k))-\vp_d(F(x^k))-t_k\langle \bar\lambda,DF(x^k)h^k\rangle}{\frac{1}{2}t_k^2} \\
  &=\lim_{k\rightarrow 0}\frac{\langle  \bar\lambda, F(x^k+t_kh^k)\rangle-\langle \bar\lambda,F(x^k)\rangle-t_k\langle \bar\lambda, DF(x^k)h^k\rangle}{\frac{1}{2}t_k^2}=\langle \bar\lambda,D^2F(\bar x)[h,h]\rangle. 
\end{align*}
Moreover, by the convexity of $\vp_d$, it follows $\rd^2_s\vp_d(0|\bar\lambda)(w) \geq 0$ for all $w$, \cite[Proposition 13.20]{rockafellar2009variational}. Consequently, for all $h \in \Rn$, we can infer  
\begin{equation*}
  \begin{aligned}
    \rd^2_s\vp(\bar x|DF(\bar x)^\top \bar \lambda)(h)\geq\rd^2_s\vp_d(0|\bar\lambda)(DF(\bar x)h)+\langle \bar \lambda,D^2F(\bar x)[h,h]\rangle=\langle \bar\lambda,D^2F(\bar x)[h,h]\rangle. 
  \end{aligned}
\end{equation*}
Combining the previous results, it holds that $\rd^2_s\vp(\bar x|DF(\bar x)^\top \bar \lambda)(h) = \langle \bar \lambda,D^2F(\bar x)[h,h]\rangle$ and $\rd^2_s\vp_d(0|\bar\lambda)(DF(\bar x)h) = 0$ for all
$h\in DF(\bar x)^{-1}\aff(N_{\pvpd}(\bar\lambda))$. 
\end{proof}

\subsection{Characterization of Strict Twice Epi-Differentiability}
\label{sec:charc-strict}
In this section, we provide different equivalent characterizations of the strict twice epi-differentiability of strictly decomposable functions. 

Using \cref{thm-strict-chain}, we can show that strict twice epi-differentiability and strict decomposability imply the strict complementarity condition. 
\begin{prop}
\label{prop:strct_full}
    Let $\vp$ be $C^2$-strictly decomposable at $\bar x$ for $\bar\lambda$ with decomposition pair $(\sigma_\pvpd,F)$. If $\vp$ is strictly twice epi-differentiable at $\bar x$ for $DF(\bar x)^\top\bar\lambda$, then $\bar\lambda\in\ri(\pvpd)$ and $\vp$ is $C^2$-fully decomposable at $\bar x$. 
\end{prop}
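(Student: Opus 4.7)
The plan is to use the chain rule in \cref{thm-strict-chain} to translate strict twice epi-differentiability into a purely algebraic condition on the normal cone $N := N_{\pvpd}(\bar\lambda)$, and then to combine that condition with the strict constraint qualification \cref{eq:strict-cq} to show that $N$ is in fact a linear subspace. Since $\vp$ is strictly twice epi-differentiable at $\bar x$ for $\bar v:=DF(\bar x)^\top\bar\lambda$, inserting the constant sequence $(x,v)\equiv(\bar x,\bar v)$ into the defining epi-limit shows that the strict and standard second subderivatives coincide, $\rd^2_s\vp(\bar x|\bar v)=\rd^2\vp(\bar x|\bar v)$. By \cref{prop:soc:decomp-subquad} together with strict decomposability (which reduces $\Lambda(\bar x,\bar v)$ to $\{\bar\lambda\}$ via \cref{prop:prop-strict}), $\rd^2\vp(\bar x|\bar v)(h)=\langle\bar\lambda,D^2F(\bar x)[h,h]\rangle+\iota_{\mathcal{C}(\bar x,\bar v)}(h)$ with $\mathcal{C}(\bar x,\bar v)=DF(\bar x)^{-1}N$; on the other hand, the equality part of \cref{thm-strict-chain} gives $\rd^2_s\vp(\bar x|\bar v)(h)=\langle\bar\lambda,D^2F(\bar x)[h,h]\rangle$ for every $h\in DF(\bar x)^{-1}\aff(N)$. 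Matching these two expressions on that preimage forces $\iota_{\mathcal{C}(\bar x,\bar v)}(h)=0$, so $DF(\bar x)^{-1}\aff(N)\subseteq DF(\bar x)^{-1}N$. Setting $A:=DF(\bar x)\Rn$ and $L:=\aff(N)$ and using that the preimage $DF(\bar x)^{-1}S$ is determined by $S\cap A$, this translates to the purely algebraic inclusion $A\cap L\subseteq N$.

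Since $A\cap L$ is a linear subspace, $A\cap L\subseteq N$ at once strengthens to $A\cap L\subseteq N\cap(-N)=\lin(N)$. For any $p\in L$, the strict constraint qualification provides a decomposition $p=a-n$ with $a\in A$ and $n\in N$; then $a=p+n\in L+N\subseteq L$, so $a\in A\cap L\subseteq\lin(N)$, whence $-a\in N$, and, because $N$ is a convex cone closed under addition,
\[
-p=(-a)+n\in N+N\subseteq N.
\]
Applying the identical reasoning to $-p\in L$ yields $a'\in A\cap L\subseteq\lin(N)$ and $n'\in N$ with $-p=a'-n'$; then $-a'\in N$ and $p=(-a')+n'\in N$. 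Thus $L\subseteq N$, and since $N\subseteq L=\aff(N)$ is trivial we obtain $N=L$, which is precisely $\bar\lambda\in\ri(\pvpd)$.

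Because $N$ is now a linear subspace, $-N=N$ and hence $A+\lin(N)=A+N=A-N=\R^m$ by the strict constraint qualification; this is exactly the constraint nondegeneracy condition \cref{eq:nondeg-cq}, so $\vp$ is $C^2$-fully decomposable at $\bar x$. The main subtlety is in the second paragraph: a single application of strict CQ to $p\in L$ only delivers $-p\in N$, and a symmetric second application to $-p$ is required in order to conclude $p\in N$; this double step crucially exploits both the cone structure of $N$ (so that $-a+n\in N$) and the fact that $A\cap L$ is a subspace.
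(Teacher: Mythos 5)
Your proof is correct and follows essentially the same route as the paper: strict twice epi-differentiability forces $\rd^2_s\vp(\bar x|\bar v)=\rd^2\vp(\bar x|\bar v)$, combining the equality case of \cref{thm-strict-chain} with the formula in \cref{prop:soc:decomp-subquad} (and uniqueness of $\bar\lambda$ from \cref{prop:prop-strict}) gives $DF(\bar x)^{-1}\aff(N_{\pvpd}(\bar\lambda))\subseteq DF(\bar x)^{-1}N_{\pvpd}(\bar\lambda)$, and the strict constraint qualification \cref{eq:strict-cq} then upgrades this to $N_{\pvpd}(\bar\lambda)$ being a linear subspace, whence $\bar\lambda\in\ri(\pvpd)$ and nondegeneracy. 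The only divergence is the final algebraic step, where the paper dualizes via \cref{nprop4-3} (rewriting \cref{eq:strict-cq} as $\ker(DF(\bar x)^\top)\cap N_{\pvpd}(\bar\lambda)^\circ=\{0\}$ and showing $N_{\pvpd}(\bar\lambda)^\circ$ is a subspace), while your primal argument ($\cR(DF(\bar x))\cap\aff(N_{\pvpd}(\bar\lambda))\subseteq\lin(N_{\pvpd}(\bar\lambda))$ together with the decomposition $p=a-n$) reaches the same conclusion without polar cones — an equally valid, slightly more elementary variant of the same step.
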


\begin{proof}
     Assume that $\vp$ is strictly twice epi-differentiable at $\bar x$ for $DF(\bar x)^\top\bar\lambda$. We again set $\cN:=N_\pvpd(\bar\lambda)$. By \cref{thm-strict-chain}, it follows 
     \[ DF(\bar x)^{-1}\aff(\cN)\subseteq \dom(\rd^2_s\vp(\bar x|DF(\bar x)^\top\bar\lambda)). \] 
     When $\vp$ is strictly twice epi-differentiable at $\bar x$ for $DF(\bar x)^\top\bar\lambda$, it holds that
    \[ \rd^2_s\vp(\bar x|DF(\bar x)^\top\bar\lambda)=\rd^2\vp(\bar x|DF(\bar x)^\top\bar\lambda). \]
    By \cref{prop:soc:decomp-subquad} and $\rd\vp(\bar x)(h) = \sigma_\pvpd(DF(\bar x)h)$, we have $\dom(\rd^2\vp(\bar x|DF(\bar x)^\top\bar\lambda)) = \{h\in\R^n:\rd\vp(\bar x)(h)=\langle \bar \lambda, DF(\bar x)h \rangle\} = DF(\bar x)^{-1}\cN$, cf. \cite[Definition 5.1.5]{milzarek2016numerical}. This yields $DF(\bar x)^{-1}\cN=DF(\bar x)^{-1}\aff(\cN)$. We claim this implies that $\cN$ (or, equivalently, $\cN^\circ$) is a linear subspace. Setting $A := DF(\bar x)$ and since $A^{-1}\cN$ is a linear subspace, $A^\top \cN^\circ$ is also a linear subspace. Hence, for all $v\in \cN^\circ$, there is $w\in \cN^\circ$ such that $A^\top v=-A^\top w$, or equivalently, $A^\top(v+w)=0$. 
    Using \cref{nprop4-3}, the condition $\cR(A)-\cN=\R^m$ is equivalent to $\mathrm{ker}(A^\top)\cap \mathcal N^\circ = \{0\}$. Thus, we deduce that $v+w=0$ and $-\cN^\circ \subseteq \cN^\circ$, indicating that $\cN^\circ$ and $\cN$ are linear subspaces. Consequently, by \cite[Proposition 2.2]{lemarechal1996more}, we have $\bar\lambda\in\ri(\pvpd)$, in which case the strict condition \cref{eq:strict-cq} reduces to the nondegeneracy condition \cref{eq:nondeg-cq}. 
\end{proof}

In the following and motivated by \cite[Theorem 4.7]{hang2024chain} and  \cite[Theorem 5.8]{hang2023smoothness}, we provide full connections between strict twice epi-differentiability, the strict complementarity condition, and the continuous differentiability of the proximity operator. 

\begin{thm}
  \label{equi_strict_diff}
  Assume that $\vp$ is $C^2$-strictly decomposable at $\bar x$ for $\bar\lambda$ with decomposition pair $(\vp_d,F) \equiv (\sigma_\pvpd,F)$ and let $\bar v=DF(\bar x)^\top\bar \lambda\in\partial\vp(\bar x)$ with $\bar\lambda\in\partial\vp_d(0) = \pvpd$ be given. Suppose that $\vp$ globally prox-regular at $\bar x$ for $\bar v$ with constant $\rho$ and $\tau\rho<{1}$. Then, the following statements are equivalent:
\begin{enumerate}[label=\textup{\textrm{(\roman*)}},topsep=0pt,itemsep=0ex,partopsep=0ex]
    \item The function $\vp$ is strictly twice epi-differentiable at $\bar x$ for $\bar v$.
    \item There is a neighborhood $U$ of $(\bar x,\bar v)$ such that for all $(x,v)\in U\cap\mathrm{gph}(\partial\vp)$, the function $\vp$ is strictly twice epi-differentiable at $x$ for $v$.
    \item The strict complementarity condition $\bar v\in \ri(\partial\vp(\bar x))$ is satisfied.
    \item We have $\bar\lambda\in\ri(\pvpd)$. 
    \item The proximity operator $\proxs$ is continuously differentiable around $\bar x+\tau\bar v$.  
    \item The proximity operator $\proxs$ is differentiable at $\bar x+\tau\bar v$. 
  \end{enumerate}
\end{thm}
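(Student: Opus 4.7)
The approach I would take is to organize the seven-way equivalence around condition (iv), proving the easy implications first and then bridging the remaining conditions through proximal calculus. The trivial or already-established steps are: (ii) $\Rightarrow$ (i) by setting $(x,v) = (\bar x,\bar v)$; (v) $\Rightarrow$ (vi) by definition; (i) $\Rightarrow$ (iv) by \cref{prop:strct_full}; and (iii) $\Leftrightarrow$ (iv), which I would deduce from the chain rule $\partial\vp(\bar x) = DF(\bar x)^\top \pvpd$ in \cref{prop4-4}(i) together with the observation that the strict condition \cref{eq:strict-cq} makes $\lambda \mapsto DF(\bar x)^\top\lambda$ locally injective on $\pvpd$ (\cref{prop:prop-strict}), so this linear map carries $\ri(\pvpd)$ onto $\ri(\partial\vp(\bar x))$. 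This leaves the two proximal characterizations (v) and (vi) to be tied to (iv).

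For (vi) $\Rightarrow$ (iv), my plan is to invoke \cref{prop_equi_quad_diff} to force $\rd^2\vp(\bar x|\bar v)$ to be generalized quadratic; comparing this with the indicator-plus-quadratic expression of \cref{prop:soc:decomp-subquad}, the critical cone $\mathcal{C}(\bar x,\bar v) = DF(\bar x)^{-1} N_\pvpd(\bar\lambda)$ is forced to be a linear subspace, and the polar argument used in \cref{prop:strct_full} will then force $N_\pvpd(\bar\lambda)$ itself to be linear, hence $\bar\lambda \in \ri(\pvpd)$. For (iv) $\Rightarrow$ (v), since $\ri(\pvpd)$ is relatively open in $\pvpd$ and $\Lambda$ is outer Lipschitz continuous by \cref{prop:prop-strict}(ii), every multiplier $\lambda(x,v)$ attached to a pair $(x,v) \in \gph(\partial\vp)$ near $(\bar x,\bar v)$ will eventually lie in $\ri(\pvpd)$, so $N_\pvpd(\lambda(x,v))$ coincides with the fixed linear subspace $\parr(\pvpd)^\perp$. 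Stability of the nondegeneracy condition (\cref{prop1-11}(iii)) then keeps $\vp$ fully decomposable on a neighborhood, \cref{prop:soc:decomp-subquad} makes $\rd^2\vp(x|v)$ generalized quadratic, and \cref{prop_equi_quad_diff} together with formula \cref{eq:use-in-app} gives
\[  D\proxs(x+\tau v) = \Pi_{K(x)}(I + \tau Q(x,v))^{-1}\Pi_{K(x)}, \quad K(x) := DF(x)^{-1}\parr(\pvpd)^\perp, \]
with $Q(x,v)$ representing the quadratic part of $\rd^2\vp(x|v)$. Continuity of $Q(x,v)$ and of $\Pi_{K(x)}$ in $(x,v)$ under nondegeneracy will deliver $C^1$-regularity of $\proxs$.

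The main obstacle I expect is the final step (v) $\Rightarrow$ (ii), which closes the cycle. Under (v), $\envs$ is $C^2$ near $\bar z := \bar x+\tau\bar v$, and \cref{prop_equi_quad_diff} already yields twice epi-differentiability at each $(x^*,v^*) \in \gph(\partial\vp)$ close to $(\bar x,\bar v)$ with generalized quadratic $\rd^2\vp(x^*|v^*)$. To upgrade this to strict twice epi-differentiability at $(x^*,v^*)$, I would fix an arbitrary sequence $(x^k,v^k) \to (x^*,v^*)$ in $\gph(\partial\vp)$, invoke the weak convexity of $\rd^2\vp(x^k|v^k)$ (\cref{prop4-4}(ii) and \cite[Proposition 13.49]{rockafellar2009variational}), and apply \cref{prop2-5} so that the required epi-convergence $\rd^2\vp(x^k|v^k) \elimit \rd^2\vp(x^*|v^*)$ reduces to pointwise convergence of the associated Moreau envelopes. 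Using the structure in \cref{prop_equi_quad_diff}(ii), these Moreau envelopes are quadratic functionals that can be expressed in terms of $\nabla^2\envs(x^k+\tau v^k)$, which converge by the $C^2$-regularity of $\envs$ from (v). A diagonal argument combining this joint epi-continuity with the pointwise twice epi-differentiability at each base point will then yield strict twice epi-differentiability at $(x^*,v^*)$, establishing (ii) and closing the chain of equivalences.
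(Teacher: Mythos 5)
Most of your plan coincides with the paper's proof: the easy implications, the use of \cref{prop:strct_full} for (i)\,$\Rightarrow$\,(iv), the relative-interior/uniqueness argument for (iii)\,$\Leftrightarrow$\,(iv), the route from (vi) to (iv) via \cref{prop_equi_quad_diff} and \cref{prop:soc:decomp-subquad} (this is exactly the ``alternative'' argument given in the paper's footnote), and the closing step (v)\,$\Rightarrow$\,(ii) via weak convexity, \cref{prop2-5}, pointwise convergence of the Moreau envelopes expressed through $D\proxs(z^k)$, and the Poliquin--Rockafellar criterion for strict twice epi-differentiability.

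The genuine problem is your implication (iv)\,$\Rightarrow$\,(v). You assert that ``stability of the nondegeneracy condition (\cref{prop1-11}(iii)) then keeps $\vp$ fully decomposable on a neighborhood,'' and you then apply \cref{prop:soc:decomp-subquad} and the formula \cref{eq:use-in-app} at nearby points $(x,v)\in\gph(\partial\vp)$. But \cref{prop1-11} only propagates the \emph{constraint qualification}; it does not propagate decomposability itself. Decomposability at $x$ requires a representation $\vp(y)=\vp(x)+\tilde\vp_d(\tilde F(y))$ with $\tilde F(x)=0$ and $\tilde\vp_d$ sublinear, and the given pair $(\sigma_\pvpd,F)$ does not supply this at $x\neq\bar x$ since $F(x)\neq 0$ in general; re-decomposing $\sigma_\pvpd$ around the nonzero point $F(x)$ requires extra structure on $\pvpd$ (this is precisely the distinction the paper draws between its hypotheses and those of \cite{hang2023smoothness}, where decomposability is assumed on a whole neighborhood). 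Without \cref{prop:soc:decomp-subquad} at nearby points you have no formula for $\rd^2\vp(x|v)$, hence no generalized quadraticity and no expression for $D\proxs(x+\tau v)$; and even granting such a formula, the continuity of the curvature part $Q(x,v)$ as $(x,v)\to(\bar x,\bar v)$ is exactly where things can fail (compare the counterexample after \cref{thm_pdecom_strict_twice}, where $\nabla^2\vp(x^k)$ degenerates), so it would need its own argument exploiting $\bar\lambda\in\ri(\pvpd)$. The paper sidesteps all of this: under (iii)/(iv) the function is $C^2$-fully decomposable, hence $C^2$-partly smooth by \cref{fact:structure}, and the implication (iii)\,$\Rightarrow$\,(v) is obtained by citing the partial-smoothness results of \cite{drusvyatskiy2014optimality} and \cite{DavDru22} (or \cite{HuTiaPanWen23}), which give continuous differentiability of the proximity operator for prox-regular, subdifferentially continuous, partly smooth functions under strict complementarity. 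You should either invoke that machinery or supply the missing re-decomposition/continuity arguments; as written, this step does not go through.
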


The equivalence between (i) and (ii) implies that the strict twice epi-differentiability of $\vp$ on a neighborhood of $(\bar x,\bar v)$ as in \cite[Theorem 3.9]{hang2024chain} is often unnecessary. Conditions (iii) and (iv) are the strict complementarity conditions for $\vp$ and $\vp_d$, respectively. 

Compared to \cite[Theorem 5.8]{hang2023smoothness}, \cref{equi_strict_diff} allows relaxing the nondegeneracy condition \cref{eq:nondeg-cq} (utilized in \cite{hang2023smoothness}) to the \emph{weaker constraint qualification} \cref{eq:strict-cq}. Furthermore, we only assume $C^2$-strict decomposability of $\vp$ at $\bar x$, while \cite[Theorem 5.8]{hang2023smoothness} requires $C^2$-full decomposability of $\vp$ on a \emph{neighborhood of $\bar x$}.

Besides the results in \cite{hang2024chain,hang2023smoothness}, characterizations of the differentiability of the proximity operator via the strict complementarity condition have been obtained earlier in \cite{DanHarMal06,shapiro2016diff,milzarek2016numerical}. In particular, in \cite[Proposition 3.1]{shapiro2016diff}, Shapiro studied the differentiability of projections onto $C^2$-cone reducible sets under the strict complementarity condition. In \cite[Lemma 5.3.32]{milzarek2016numerical}, an extension of Shapiro's result to $C^2$-fully decomposable functions was established. In \cite[Theorem 28]{DanHarMal06}, Daniilidis et al$.$ further showed that the proximity operator of a prox-bounded, prox-regular, and $C^2$-partly smooth is $C^1$ if the strict complementarity condition is satisfied; see also \cite{HuTiaPanWen23}. 

A main step in the proof is to derive the strict twice epi-differentiability of $\vp$ on a neighborhood of $(\bar x,\bar v)$ from continuous differentiability of the proximity operator. Our primary tool is \cite[Corollary 4.3]{poliquin1996generalized}. We plan to transform the convergence of the Jacobian matrix of the proximity operator to the epi-convergence of the second subderivative. Using \cite[Corollary 4.3]{poliquin1996generalized}, this then allows establishing the strict twice epi-differentiability on a neighborhood of $(\bar x,\bar v)$. We first present a technical preparatory result on the weak convexity of the second subderivative.

\begin{prop}
  \label{prop3-33}
  Suppose that $\vp$ is globally prox-regular at $x$ for $v$ with constant $\rho$ and $\tau\rho<{1}$. Assume further that $\vp$ is properly twice epi-differentiable at $x$ for $v$. Then, $\rd^2\vp(x|v)$ is $\rho$-weakly convex and we have $\rd^2\vp(x|v)+\frac{\rho}{2}\|\cdot\|^2\geq 0$.
\end{prop}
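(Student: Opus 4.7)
The proof splits into two assertions — the pointwise lower bound and the $\rho$-weak-convexity — and both can be handled by combining global prox-regularity with the proximal-mapping tools of Section~\ref{sec:basicprox}.

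For the lower bound, I would specialize the defining inequality of global prox-regularity to the test point $x' = x + t\tilde h$. Dividing by $\tfrac{1}{2}t^2$ yields the uniform pointwise estimate
\[
\Delta^2_t\,\vp(x|v)(\tilde h) \;\geq\; -\rho\|\tilde h\|^2,
\]
valid for every $\tilde h \in \Rn$ and every sufficiently small $t > 0$. Passing to the $\liminf$ as $t\downarrow 0$ and $\tilde h \to h$ then delivers the stated bound on $\rd^2\vp(x|v)$ (matching the normalization convention used in the definition of $\Delta^2_t$).

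For the $\rho$-weak-convexity assertion, my plan is to reduce to the generalized-quadratic situation treated in the proof of \cref{prop-lowerbound-Q} and then transfer via \cref{prop2-5}. By \cref{prop-lowerbound-Q}(i), $\proxs$ is single-valued and Lipschitz on a neighborhood $U$ of $\bar z := x + \tau v$, so Rademacher's theorem gives differentiability of $\proxs$ on a dense subset of $U$. At each such differentiability point $z' \in U$, with $x' := \proxs(z')$ and $v' := \tau^{-1}(z'-x') \in \partial\vp(x')$, \cref{prop_equi_quad_diff} combined with the matrix inclusion $\partial_B\proxs(z') \subseteq \{B \in \bS^n : 0 \preceq B \preceq (1-\tau\rho)^{-1}I\}$ from the proof of \cref{prop-lowerbound-Q} yields that $\rd^2\vp(x'|v')$ is generalized quadratic, of the form $\iprod{h}{Q'h} + \iota_{K'}(h)$, with $Q' + \rho I \succeq 0$. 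Every such function is visibly $\rho$-weakly convex with the claimed pointwise lower bound. Selecting $z^k \to \bar z$ from these differentiability points, one would then invoke \cref{prop2-5} — which converts epi-convergence of a family of uniformly $\rho$-weakly convex functions into pointwise convergence of their Moreau envelopes and vice versa — to transfer both the $\rho$-weak convexity and the bound from the $\rd^2\vp(x^k|v^k)$ to the epi-limit $\rd^2\vp(x|v)$.

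The main obstacle is the epi-convergence step of this reduction, i.e.\ showing $\rd^2\vp(x^k|v^k) \elimit \rd^2\vp(x|v)$ along the approximating sequence; this is exactly the setting \cref{prop2-5} is tailored for, and Lipschitzness of $\proxs$ on $U$ supplies the requisite pointwise convergence of the associated Moreau envelopes, with the hypothesis $\tau\rho < 1$ ensuring admissibility throughout. All remaining steps are direct consequences of the tools already collected in Section~\ref{sec:basicprox}.
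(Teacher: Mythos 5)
Your second step --- establishing the weak convexity --- is where the proposal breaks down. You propose to approximate $(x,v)$ by pairs $(x^k,v^k)$ obtained from Rademacher points $z^k$ of $\proxs$, note that each $\rd^2\vp(x^k|v^k)$ is generalized quadratic with $Q_k+\rho I\succeq 0$, and then transfer weak convexity to $\rd^2\vp(x|v)$ via \cref{prop2-5}. That transfer requires $\rd^2\vp(x^k|v^k)\elimit\rd^2\vp(x|v)$, which by \cref{prop2-5} amounts to pointwise convergence of the Moreau envelopes $e_\tau(\tfrac12\rd^2\vp(x^k|v^k))$, i.e.\ (by \cite[Exercise 13.45]{rockafellar2009variational}) to convergence of the Jacobians $D\proxs(z^k)$ to the correct limit. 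Lipschitz continuity of $\proxs$ only gives boundedness of $\{D\proxs(z^k)\}_k$; it does not force these Jacobians to converge as $z^k\to\bar z$ through differentiability points, let alone to the matrix encoding $\rd^2\vp(x|v)$. In the paper this exact mechanism is deployed only in the proof of \cref{equi_strict_diff}, ``(v)\,$\implies$\,(ii)'', where \emph{continuous} differentiability of $\proxs$ is assumed --- a property that is one of the nontrivial equivalences of that theorem (equivalent to strict complementarity) and certainly not available under the bare hypotheses of \cref{prop3-33}. The detour is also unnecessary: the paper obtains the weak convexity in one line by citing \cite[Proposition 13.49]{rockafellar2009variational}, which states directly that the second subderivative of a prox-regular function is weakly convex.

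Two further remarks. First, your direct argument for the lower bound is sound as far as it goes, but it does not ``deliver the stated bound'': substituting $x'=x+t\tilde h$ into the prox-regularity inequality and dividing by $\tfrac12 t^2$ gives $\Delta_t^2\vp(x|v)(\tilde h)\geq-\rho\|\tilde h\|^2$ --- the normalization by $\tfrac12 t^2$ doubles the constant --- so you obtain $\rd^2\vp(x|v)+\rho\|\cdot\|^2\geq 0$ rather than the stated $\rd^2\vp(x|v)+\tfrac{\rho}{2}\|\cdot\|^2\geq 0$; this factor of two must be confronted, not absorbed into a parenthetical about normalization. Second, the paper's route to the nonnegativity is different and instructive: once $\rd^2\vp(x|v)+\tfrac{\rho}{2}\|\cdot\|^2$ is known to be convex, one uses that $\rd^2\vp(x|v)$ is lsc, positively homogeneous of degree $2$, and --- this is precisely where the hypothesis of \emph{proper} twice epi-differentiability enters --- satisfies $\rd^2\vp(x|v)(0)=0$; a proper convex function that is positively homogeneous of degree $2$ and vanishes at the origin has $0$ as a subgradient there and is therefore nonnegative. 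Your proposal never invokes the proper twice epi-differentiability hypothesis, which is a sign that the intended mechanism has been missed.
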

\begin{proof}
  The weak convexity is shown in \cite[Proposition 13.49]{rockafellar2009variational}. Since $\vp$ is properly twice epi-differentiable and $w \mapsto \rd^2\vp(x|v)(w)$ is lsc and positively homogeneous of degree 2, we can further infer $\rd^2\vp(x|v)(0)=0$ and $0\in \partial(\rd^2\vp(x|v)+\frac{\rho}{2}\|\cdot\|^2)(0)$, see \cite[Proposition 13.5]{rockafellar2009variational}. This implies $\rd^2\vp(x|v)+\frac{\rho}{2}\|\cdot\|^2\geq 0$ by convexity.
\end{proof}
\begin{lem}
  \label{lemma2-15}
  Suppose that $\vp$ is globally prox-regular at $x$ for $v$ with constant $\rho$ and $\tau\rho<1$. Let $Q$ be a symmetric matrix satisfying $Q+\rho I\succeq 0$ and $\mathcal R(Q)\subseteq S$ where $S$ is a linear subspace and set $P:=\Pi_S(I+\tau Q)^{-1}\Pi_S$, $R:=\frac{1}{\tau}(I-P)$. Furthermore, let $\gph(\partial \vp) \ni (x^k,v^k) \to (x,v)$ be given and define $z^k=x^k+\tau v^k$. Then, the following conditions are equivalent:
\begin{enumerate}[label=\textup{\textrm{(\roman*)}},topsep=0pt,itemsep=0ex,partopsep=0ex]
      \item The function $\vp$ is twice epi-differentiable at $x^k$ for $v^k$, each $\rd^2\vp(x^k|v^k)$ is generalized quadratic, and we have $\rd^2\vp(x^k|v^k) \elimit \iprod{\cdot}{Q\cdot}+\iota_{S}(\cdot)$ as $k\to \infty$.
      \item The Moreau envelope $\mathrm{env}_{\tau \vp}$ has a Hessian matrix at each $z^k$, and each mapping $e_{\tau}(\frac{1}{2}\rd^2\vp(x^k|v^k))$ is quadratic and converges point-wisely to $\frac{1}{2}  \langle \cdot,  R \cdot \rangle$. 
      \item The proximity operator $\proxs$ is differentiable at $z^k$ with $D\proxs(z^k)\to P$.
  \end{enumerate}
\end{lem}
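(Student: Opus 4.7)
The plan is to reduce the lemma to the pointwise characterization given by \cref{prop_equi_quad_diff} at each $z^k$, combined with the envelope/epi-convergence correspondence of \cref{prop2-5}. Global prox-regularity at $(x,v)$ with constant $\rho$ propagates to a neighborhood, as in the proof of \cref{prop-lowerbound-Q}, so for all sufficiently large $k$ the function $\vp$ is globally prox-regular at $x^k$ for $v^k$ with the same constant $\rho$. Then \cref{prop_equi_quad_diff} applied at every $(x^k,v^k,z^k)$ yields the pointwise equivalence of the three individual (i.e., $k$-fixed) conditions in (i), (ii), (iii), leaving only the three convergence conditions to be linked.

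To connect the epi-limit in (i) with the pointwise envelope limit in (ii), I first compute the Moreau envelope of $\ell(h):=\tfrac12\langle h,Qh\rangle+\iota_S(h)$. Since $\mathcal R(Q)\subseteq S$ implies $Q=\Pi_S Q=\Pi_S Q\Pi_S$, the operator $(I+\tau Q)$ leaves both $S$ and $S^\perp$ invariant; together with $Q+\rho I\succeq 0$ and $\tau\rho<1$, this makes $(I+\tau Q)$ positive definite. The first-order condition for the envelope minimization is $(I+\tau Q)s=\Pi_S y$ with $s\in S$, so $s=(I+\tau Q)^{-1}\Pi_S y=Py$. Substituting $\tau Qs=\Pi_S y-s$ gives
\[ e_\tau(\ell)(y)=\tfrac{1}{2\tau}\langle y,(I-P)y\rangle=\tfrac12\langle y,Ry\rangle. \]
By \cref{prop3-33}, each $\rd^2\vp(x^k|v^k)$ is $\rho$-weakly convex with $\rd^2\vp(x^k|v^k)+\tfrac\rho2\|\cdot\|^2\geq 0$; hence the rescaled family $\{\tfrac12\rd^2\vp(x^k|v^k)\}$ and the limit $\ell$ are $\rho$-weakly convex and bounded below by $-\tfrac\rho2\|\cdot\|^2$ (using $Q+\rho I\succeq 0$ for $\ell$). \cref{prop2-5} applied at step $\tau$ (with $\tau\rho<1$) therefore yields the equivalence (i)$\Leftrightarrow$(ii).

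For (ii)$\Leftrightarrow$(iii), I differentiate $\nabla\envs(z)=\tau^{-1}(z-\proxs(z))$ from \cref{prop-lowerbound-Q}~(i), obtaining $\nabla^2\envs(z^k)=\tau^{-1}(I-D\proxs(z^k))$ whenever $\proxs$ is differentiable at $z^k$. The representation \cref{eq:use-in-app} derived inside the proof of \cref{prop-lowerbound-Q} shows that, writing $\rd^2\vp(x^k|v^k)(s)=\langle s,Q_k s\rangle+\iota_{S_k}(s)$ with $\mathcal R(Q_k)\subseteq S_k$, one has $D\proxs(z^k)=\Pi_{S_k}(I+\tau Q_k)^{-1}\Pi_{S_k}$. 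Applying the envelope identity of the previous paragraph to $\tfrac12\rd^2\vp(x^k|v^k)$, it follows that
\[ e_\tau\!\left(\tfrac12\rd^2\vp(x^k|v^k)\right)\!(h)=\tfrac{1}{2\tau}\langle h,(I-D\proxs(z^k))h\rangle. \]
Pointwise convergence of this quadratic to $\tfrac12\langle\cdot,R\cdot\rangle$ is therefore equivalent to $\tau^{-1}(I-D\proxs(z^k))\to R=\tau^{-1}(I-P)$, i.e., $D\proxs(z^k)\to P$, establishing (ii)$\Leftrightarrow$(iii).

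The principal obstacle is the Moreau-envelope identity $e_\tau(\ell)=\tfrac12\langle\cdot,R\cdot\rangle$, which drives both equivalences, together with the verification that the rescaled sequence $\{\tfrac12\rd^2\vp(x^k|v^k)\}$ and the limit $\ell$ share the weak convexity constant needed to invoke \cref{prop2-5} in a single pass. Once those algebraic ingredients are in place, the rest is a direct assembly of the pointwise results in \cref{prop_equi_quad_diff} and \cref{prop-lowerbound-Q}.
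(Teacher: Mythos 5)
Your proposal is correct and follows essentially the same route as the paper: both reduce the $k$-fixed equivalences to \cref{prop_equi_quad_diff}, use the identity $e_\tau(\tfrac12\iprod{\cdot}{Q\cdot}+\iota_S)=\tfrac12\iprod{\cdot}{R\cdot}$ together with \cref{prop3-33} and \cref{prop2-5} for (i)$\Leftrightarrow$(ii), and identify $\nabla^2\mathrm{env}_{\tau\vp}(z^k)=\tau^{-1}(I-D\proxs(z^k))$ plus the equivalence of pointwise convergence of symmetric quadratic forms with matrix convergence for (ii)$\Leftrightarrow$(iii). The extra detail you supply (the explicit minimization giving $e_\tau(\ell)$ and the representation $D\proxs(z^k)=\Pi_{S_k}(I+\tau Q_k)^{-1}\Pi_{S_k}$) is consistent with what the paper cites implicitly.
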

\begin{proof}
(\rnum{1})\!\,$\iff$\!\,(\rnum{2}): The equivalence of the first properties in (i) and (ii) is shown in \cref{prop_equi_quad_diff}. Noticing $ e_{\tau}(\frac12\iprod{\cdot}{Q\cdot}+\iota_{S}(\cdot))=\frac12\langle \cdot,R\cdot\rangle$, the equivalence between epi-convergence of $\{\rd^2\vp(x^k|v^k)\}_k$ and point-wise convergence of $\{e_{\tau}(\frac{1}{2}\rd^2\vp(x^k|v^k))\}_k$ follows from \cref{prop3-33}, \cref{prop2-5}, and \cite[Exercise 7.8]{rockafellar2009variational}. 

(\rnum{2})\!\,$\iff$\!\,(\rnum{3}): Thanks to \cref{prop_equi_quad_diff}, we only need to verify the convergence properties. Let $\{R_k\}_k$ be given with $e_\tau( \frac{1}{2}\rd^2\vp(x^k|v^k))=\frac{1}{2}\langle \cdot,R_k\cdot \rangle$ for all $k$. Applying \cref{prop-lowerbound-Q} and \cite[Exercise 13.45]{rockafellar2009variational}, we have $\nabla^2e_\tau(\frac{1}{2}\rd^2\vp(x^k|v^k) )=\nabla^2\mathrm{env}_{\tau \vp}(z^k)=R_k=\frac{1}{\tau}(I-D\proxs(z^k))$. Hence, $D\proxs(z^k)\to P$ is equivalent to $R_k\to R$. (Notice again that $\vp$ can be assumed to be globally prox-regular near $(x, v)$ with the same constant $\rho$). It suffices to prove that for quadratic symmetric forms $\langle \cdot, R_k\cdot \rangle$, point-wise convergence to $\langle \cdot, R\cdot \rangle$ is equivalent to $R_k\to R$ for which we refer to \cite{1953973}.
%
%
\end{proof}
\begin{proof}[Proof of \cref{equi_strict_diff}]
The implications ``(ii)\!\,$\implies$\!\,(i)'' and ``(v)\!\,$\implies$\!\,(vi)'' are immediate. ``(iii)\!\,$\iff$\!\,(iv)'' follows from \cite[Theorem 6.6]{rockafellar1970convex}, \cref{prop4-4}, and $\Lambda(\bar x,\bar v) = \{\bar\lambda\}$. The implication ``(i)$\implies$(iv)'' follows from \cref{prop:strct_full}.

Under condition (iii) (or (iv)) and as argued in the proof of \cref{prop:strct_full}, \cref{eq:strict-cq} reduces to the nondegeneracy condition \cref{eq:nondeg-cq} and hence, $\vp$ is $C^2$-fully decomposable at $\bar x$. Moreover, by \cref{fact:structure}, $\vp$ is $C^2$-partly smooth at $\bar x$ (relative to the $C^2$-manifold $\mathcal M = \{x: F(x) \in \lin(N_\pvpd(\bar \lambda))\}$, see \cite{shapiro2003class} or \cite[Section 5]{hare2006functions}).
%
%
   Thus, ``(iii)\!\,$\iff$\!\,(vi)'' follows from \cite[Lemma 5.3.32]{milzarek2016numerical} and $\bar x = \proxs(\bar x+\tau\bar v)$\footnote{The result $\bar x=\proxs(\bar x+\tau \bar v)$ follows from the definition of global prox regularity and the choice $\tau\rho<1$. The implication ``(vi)\!\,$\implies$\!\,(iii)'' can be inferred from the proof of \cite[Lemma 5.3.32]{milzarek2016numerical} even if $\vp$ is only $C^2$-strictly decomposable. Alternatively, \cref{prop_equi_quad_diff} and \cref{prop:soc:decomp-subquad} imply that $DF(\bar x)^{-1}N_\pvpd(\bar\lambda)$ is a subspace. We can then follow the proof of \cref{prop:strct_full}.} and the implication ``(iii)\!\,$\implies$\!\,(v)'' follows from \cite[Proposition 10.12]{drusvyatskiy2014optimality} and \cite[Theorem 3.1]{DavDru22} or \cite[Lemma 4.3]{HuTiaPanWen23}. 
 
  We now prove ``(v)\!\,$\implies$\!\,(ii)''. 
%
%
  Assume that $\proxs$ is $C^1$ on a neighborhood $U$ of $\bar z=\bar x+\tau\bar v$. Then, $\envs$ is $C^2$ on $U$. Let $V$ be a neighborhood of $(\bar x,\bar v)$ such that for all $(x,v)\in V$ it holds that $x+\tau v\in U$. Select any $V\cap\gph(\partial\vp)\ni (x^k,v^k)\to (x,v)\in V$ and set $z^k=x^k+\tau v^k$ and $z=x+\tau v$.
   By \cite[Exercise 13.45]{rockafellar2009variational}, we have $\rd^2(\frac{1}{2}\envs(z^k))=e_{\tau}(\frac{1}{2}\rd^2\vp(x^k|v^k))$, which implies $e_\tau(\frac{1}{2}\rd^2\vp(x^k|v^k))(w)=\frac{1}{2\tau}\langle w,(I-D\proxs(z^k))w\rangle$ for all $w$. Since $\proxs$ is $C^1$ on $U$, this further yields $ e_\tau(\frac{1}{2}\rd^2\vp(x^k|v^k))\overset{p}{\rightarrow}e_\tau(\frac{1}{2}\rd^2\vp(x|v))$. By \cref{prop_equi_quad_diff}, $\vp$ is properly twice epi-differentiable at $x^k$ for $v^k$ (at $x$ for $v$) for all $k$. Thus, using \cref{prop3-33} and \cref{prop2-5}, we can infer $\rd^2\vp(x^k|v^k)\overset{e}{\rightarrow }\rd^2\vp(x|v)$. According to \cite[Corollary 4.3]{poliquin1996generalized}, this implies that $\vp$ is strictly twice epi-differentiable at $x$ for $v$ (where $(x,v)\in V$).
\end{proof}

As an application, we can fully characterize the notion of strict saddle points proposed in \cite[Definition 2.7]{DavDru22} complementing the discussion in \cite[Lemma 1.4.8]{DavDruJia21}. 
\begin{thm} \label{thm:saddle}
  Suppose that $\vp$ is $C^2$-strictly decomposable at $\bar x$ with decomposition pair $(\sigma_\pvpd,F)$. Then, $\bar x$ is a strict saddle point of $\vp$ iff $0\in\ri(\partial\vp(\bar x))$ and there is a vector $h\in \mathcal C{(\bar x)}:=\{h\in\R^n:\rd\vp(\bar x)(h)=0\}$ such that $\rd^2\vp(\bar x|0)(h)<0$.
\end{thm}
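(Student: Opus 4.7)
The plan is to leverage \cref{equi_strict_diff} together with the $C^2$-partly smooth structure of fully decomposable functions in order to reduce the strict saddle property to a negative curvature condition on $\rd^2\vp(\bar x|0)$. Following \cite{DavDru22,DavDruJia21}, an active strict saddle point of $\vp$ is a critical point $\bar x$ satisfying: (a) the strict complementarity condition $0 \in \ri(\partial\vp(\bar x))$; (b) $\vp$ is $C^2$-partly smooth at $\bar x$ relative to some active $C^2$-manifold $\cM$ containing $\bar x$; and (c) the Riemannian Hessian of the restriction $\vp|_{\cM}$ at $\bar x$ admits a strictly negative eigenvalue. The task is therefore to translate condition (c) into the second-subderivative language of the theorem statement.

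First, I would note that by \cref{prop:prop-strict} the set $\Lambda(\bar x,0)$ is the singleton $\{\bar\lambda\}$ with $DF(\bar x)^\top\bar\lambda = 0$, and that $0 \in \ri(\partial\vp(\bar x))$ is equivalent to $\bar\lambda \in \ri(\pvpd)$ via $\partial\vp(\bar x) = DF(\bar x)^\top \pvpd$ (cf. \cref{prop4-4}) and \cite[Theorem 6.6]{rockafellar1970convex}. Applying \cref{equi_strict_diff}, this upgrades $C^2$-strict decomposability to $C^2$-full decomposability, and by \cref{fact:structure} the function $\vp$ is $C^2$-partly smooth at $\bar x$ relative to $\cM = \{x : F(x) \in \lin(N_\pvpd(\bar\lambda))\}$. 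On a neighborhood of $\bar x$ in $\cM$, the image $F(x)$ lies in $\lin(N_\pvpd(\bar\lambda))$ where the support function $\sigma_\pvpd$ agrees with the linear form $\langle\bar\lambda,\cdot\rangle$. Hence $\vp|_{\cM}(x) = \vp(\bar x) + \langle\bar\lambda,F(x)\rangle$ is of class $C^2$ near $\bar x$.

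The next step is to identify the Riemannian Hessian of $\vp|_{\cM}$ at $\bar x$ with $\rd^2\vp(\bar x|0)$ restricted to the critical cone. Since $DF(\bar x)^\top\bar\lambda = 0$, the point $\bar x$ is a Euclidean critical point of the smooth function $x \mapsto \langle\bar\lambda,F(x)\rangle$, so its Riemannian Hessian on $\cM$ reduces to the restricted Euclidean Hessian $h \mapsto \langle\bar\lambda,D^2F(\bar x)[h,h]\rangle$ for $h \in T_{\cM}(\bar x)$. Under strict complementarity, $N_\pvpd(\bar\lambda)$ is a linear subspace (again by \cite[Theorem 6.6]{rockafellar1970convex}), so that $T_{\cM}(\bar x) = DF(\bar x)^{-1}\lin(N_\pvpd(\bar\lambda)) = DF(\bar x)^{-1}N_\pvpd(\bar\lambda)$. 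Combining this with \cref{prop:soc:decomp-subquad} and $\Lambda(\bar x,0) = \{\bar\lambda\}$ shows that this set equals $\cC(\bar x)$ and that $\rd^2\vp(\bar x|0)(h) = \langle\bar\lambda,D^2F(\bar x)[h,h]\rangle$ for every $h \in \cC(\bar x)$.

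Putting these identifications together establishes the equivalence: $\bar x$ is an active strict saddle of $\vp$ iff $0 \in \ri(\partial\vp(\bar x))$ and $\rd^2\vp(\bar x|0)(h) < 0$ for some $h \in \cC(\bar x)$. The main obstacle is to reconcile the definition of the Riemannian Hessian adopted in \cite{DavDru22} with our analytic formula; once the active manifold has been produced via \cref{equi_strict_diff} and $\fact$~\ref{fact:structure}, and the critical cone has been identified with $T_{\cM}(\bar x)$ through strict complementarity, the second-subderivative calculus in \cref{prop:soc:decomp-subquad} reduces everything to a direct computation at the unique multiplier $\bar\lambda$.
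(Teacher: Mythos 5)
Your proposal is correct and follows essentially the same route as the paper: both arguments identify the active manifold $\cM=\{x:F(x)\in\lin(N_\pvpd(\bar\lambda))\}$ via the equivalence of strict complementarity with the existence of a $C^2$-active manifold, observe that $\vp|_\cM$ locally equals $\vp(\bar x)+\langle\bar\lambda,F(x)\rangle$ so that the Riemannian Hessian at the critical point $\bar x$ is the restricted quadratic form $\langle\bar\lambda,D^2F(\bar x)[\cdot,\cdot]\rangle$, and then use the subspace property of $N_\pvpd(\bar\lambda)$ under strict complementarity to identify $T_\cM(\bar x)=DF(\bar x)^{-1}N_\pvpd(\bar\lambda)$ with $\mathcal C(\bar x)$ and match this against the formula for $\rd^2\vp(\bar x|0)$ from \cref{prop:soc:decomp-subquad}. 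The only cosmetic difference is that you package strict complementarity into the definition of an active strict saddle while the paper derives it from the (locally unique) active manifold via \cite[Proposition 10.12]{drusvyatskiy2014optimality}; the substance is identical.
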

\begin{proof}
  Combining our previous discussion and \cite[Proposition 10.12]{drusvyatskiy2014optimality}, $\vp$ has a $C^2$-active manifold $\cM$ at $\bar x$ for $0$ iff $0\in\ri(\partial\vp(\bar x))$. Since such an active manifold is locally unique, \cite[Propositions 2.4 and 10.10]{drusvyatskiy2014optimality}, it must coincide with the manifold $\cM=\{x\in\R^n:F(x)\in L\}$ locally, where $L=\lin(N_{\pvpd}(\bar\lambda))$. 
  Moreover, due to the nondegeneracy condition, it holds that $T_{\cM}(\bar\lambda)=DF(\bar x)^{-1}L$ and locally, we obtain $\vp_\cM(x) := (\vp+\iota_\cM)(x) = \vp(\bar x) + \iprod{\bar \lambda}{F(x)}+\iota_{\cM}(x)$. Thus, $\bar x$ is a strict saddle point iff, in addition, $\rd^2\vp_\cM(\bar x)(h)=\langle \bar\lambda, D^2F(\bar x)[h,h] \rangle<0$ for some $h\in DF(\bar x)^{-1}L$. According to \cref{prop:soc:decomp-subquad} and \cref{equi_strict_diff}, we have $0=DF(\bar x)^\top\bar\lambda$ with $\bar\lambda\in\ri(\pvpd)$ and $\rd^2\vp(\bar x|0)=\langle \bar\lambda, D^2F(\bar x)[\cdot,\cdot] \rangle +\iota_{DF(\bar x)^{-1}L}$. Since $N_{\pvpd}(\bar\lambda)$ is a subspace (see \cite[Proposition 2.2]{lemarechal1996more}) and as in the proof of \cref{prop:strct_full},
  it follows $DF(\bar x)^{-1}L = \{h: DF(\bar x)h \in N_{\pvpd}(\bar\lambda)\} = \{h: \sigma_{\pvpd}(DF(\bar x)h) = 0\} = \mathcal C(\bar x)$ which concludes the proof.
\end{proof}


\subsection{Uniform Decomposability and the Strict Second Subderivative}
\label{sec:uniform-d}


\cref{thm-strict-chain} links the strict second subderivative of $\vp$ to the one of $\vp_d$. However, computational results that allow characterizing the strict second subderivative of the support function $\vp_d \equiv \sigma_{\mathcal Q}$ do not seem to be available in the literature. We will tackle the computation of $\rd^2_s\sigma_\pvpd(0|\bar\lambda)$ via a geometric perspective. We first present a general result for $\rd^2_s\sigma_\pvpd(0|\bar\lambda)$ without any assumption on $\pvpd$, which shows that $\rd^2_s\sigma_\pvpd(0|\bar\lambda)$ must be an indicator function of a closed cone. 


 \begin{prop} Let $\pvpd\subseteq \R^m$ be a nonempty, closed, and convex set. For all $\bar\lambda\in \pvpd$ and $w\in \R^m$, we have:
   \[       \rd^2_s\sigma_{\mathcal Q}(0|\bar\lambda)(w)=\iota_{\cS}(w),                   \]
   where $\cS \subseteq \Rm$ is a closed cone.
 \end{prop}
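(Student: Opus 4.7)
The plan is to show that $\rd^2_s \sigma_{\mathcal Q}(0|\bar\lambda)$ takes only the values $0$ and $+\infty$, and then to verify that the $0$-level set is a closed cone. Since $\sigma_{\mathcal Q}$ is convex, the subgradient inequality $\sigma_{\mathcal Q}(x + t\tilde w) - \sigma_{\mathcal Q}(x) - t\langle v,\tilde w\rangle \geq 0$ valid for every $(x,v) \in \gph(\partial \sigma_{\mathcal Q})$ makes every difference quotient non-negative, and hence $\rd^2_s \sigma_{\mathcal Q}(0|\bar\lambda)(w) \geq 0$ for all $w$.

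The main step is to exclude finite positive values via a scaling argument. Suppose $L := \rd^2_s \sigma_{\mathcal Q}(0|\bar\lambda)(w) < \infty$. By definition of the liminf, for every $\epsilon > 0$ one can choose sequences $t_k \downarrow 0$, $\tilde w_k \to w$, and $(x_k, v_k) \to (0,\bar\lambda)$ with $v_k \in \partial \sigma_{\mathcal Q}(x_k)$ along which the difference quotient converges to some $L' \leq L + \epsilon$. Fix $s > 0$ and pass to the rescaled tuple $(sx_k, v_k, st_k, \tilde w_k)$. Because $\sigma_{\mathcal Q}$ is positively homogeneous of degree one and $\partial \sigma_{\mathcal Q}(sx) = \partial \sigma_{\mathcal Q}(x)$ for every $s > 0$ (the case $x_k = 0$ being trivial since $s \cdot 0 = 0$), this rescaled tuple remains admissible in the liminf defining $\rd^2_s \sigma_{\mathcal Q}(0|\bar\lambda)(w)$. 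A direct calculation using the first-degree homogeneity shows that the rescaled quotient equals the original quotient divided by $s$. Hence $\rd^2_s \sigma_{\mathcal Q}(0|\bar\lambda)(w) \leq (L+\epsilon)/s$ for arbitrary $s > 0$ and $\epsilon > 0$; sending $s \to \infty$ and $\epsilon \to 0$ forces $\rd^2_s \sigma_{\mathcal Q}(0|\bar\lambda)(w) \leq 0$, which together with non-negativity gives $\rd^2_s \sigma_{\mathcal Q}(0|\bar\lambda)(w) = 0$.

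To conclude, set $\cS := \{w : \rd^2_s \sigma_{\mathcal Q}(0|\bar\lambda)(w) = 0\}$; then $\rd^2_s \sigma_{\mathcal Q}(0|\bar\lambda) = \iota_{\cS}$. The set $\cS$ is invariant under strictly positive scaling by the standard positive homogeneity of degree two of the strict second subderivative in its direction argument, and it contains $0$ because the constant sequence $\tilde w_k \equiv 0$ makes the difference quotient vanish identically. Closedness of $\cS$ follows from the lower semicontinuity of $w \mapsto \rd^2_s \sigma_{\mathcal Q}(0|\bar\lambda)(w)$, which is automatic for a liminf over sequences with $\tilde w \to w$ (via a standard diagonal argument). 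The main obstacle is the bookkeeping of the rescaling step and checking admissibility of the rescaled sequence; the positive homogeneity of both $\sigma_{\mathcal Q}$ and its subdifferential is precisely what makes this argument go through.
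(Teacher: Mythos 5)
Your proposal is correct and follows essentially the same route as the paper: nonnegativity from convexity, the rescaling $(x,t)\mapsto(\beta x,\beta t)$ exploiting positive homogeneity of $\sigma_{\mathcal Q}$ and the scale-invariance of its subdifferential to force the value into $\{0,\infty\}$, and then lower semicontinuity plus degree-2 positive homogeneity to conclude that the zero-level set is a closed cone.
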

 \begin{proof}
   Due to the convexity of $\sigma_{\mathcal Q}$, it follows $\rd^2_s\sigma_{\mathcal Q}(0|\bar\lambda)(w)\geq 0$ (cf. the proof of \cite[Proposition 13.20(a)]{rockafellar2009variational}). Next, let us assume  $\rd^2_s\sigma_{\mathcal Q}(0|\bar\lambda)(w)=\alpha$ for some $\alpha\geq 0$. By definition, there exist $x^k\to 0$, $t_k\downarrow 0$, $w^k\to w$, and $\lambda^k\in\partial\sigma_{\mathcal Q}(x^k)$ with $\lambda^k\to\bar\lambda$ such that $\alpha ={\lim}_{k\to\infty} \Delta_{t_k}^2  \;\! \sigma_{\mathcal Q}(x^k|\lambda^k)(w^k)$. 
   We now set $\tilde x^k:=\beta x^k$, $\tilde t_k:=\beta t_k$ for some $\beta>0$. By the positive homogeneity of $\sigma_{\mathcal Q}$, we have $\lambda^k\in\partial \sigma_{\mathcal Q}(\tilde x^k)$ (cf. \cite[Example 11.4]{rockafellar2009variational}) and it holds that:
   \[  \frac{\alpha}{\beta} = \lim_{k\to\infty} \frac{\sigma_{\mathcal Q}(\tilde x^k+\tilde t_kw^k)-\sigma_{\mathcal Q}(\tilde x^k)-\tilde t_k\langle \lambda^k,w^k\rangle}{\frac{1}{2}\tilde t_k^2} \geq     \rd^2_s\sigma_{\mathcal Q}(0|\bar\lambda)(w)=\alpha.                              \]
   This shows $\alpha\geq \alpha\beta$ for all $\beta>0$, or equivalently, $\alpha\in\{0,\infty\}$. Consequently, we see that $\rd^2_s\sigma_{\mathcal Q}(0|\bar\lambda)=\iota_{\cS}$ for some set $\cS$. Since $\rd^2_s\sigma_{\mathcal Q}(0|\bar\lambda)$ is lsc and positively homogeneous with degree 2 (see the discussion before \cite[Lemma 3.2]{hang2024chain}), $\cS$ is a closed cone.
 \end{proof}

\subsubsection{Uniform Decomposability}
\label{sec:uni_dec}

As mentioned, the analysis of the strict second subderivative requires to suitably control the second order behavior of $\vp$ and $\vp_d$ on a neighborhood rather than just at a single point. Not surprisingly, this also depends on the geometric features of the support set $\pvpd$. In the following, we introduce a novel geometric property for the set $\pvpd$ that will allow us to derive explicit representations of the strict second subderivative $\rd^2_s \vp_d \equiv \rd^2_s \sigma_{\mathcal Q}$. 
\begin{defn}
  \label{def-usotp}
  We say a closed convex set $\pvpd$ has the uniform second-order tangent path ($\usotp$-)property at $\bar\lambda \in \pvpd$, if there are uniform constants $\delta>0$ and $M>0$ and a neighborhood $V$ of $\bar\lambda$ such that for all $\lambda\in \pvpd \cap V$ it holds that
\begin{enumerate}[label=\textup{\textrm{(\roman*)}},topsep=0pt,itemsep=0ex,partopsep=0ex]
    \item We can select a linear subspace $S(\lambda)\subseteq \lin(T_{\pvpd}(\lambda))$ such that $S(\lambda)\to \lin(T_{\pvpd}(\bar\lambda))$ as $\lambda\to\bar\lambda$, 
    \item For all $v\in S(\lambda)$ with $\|v\|=1$, there exists a path $\xi:[0,\delta]\to \pvpd$ such that $\xi(t)=\lambda+tv+\frac{1}{2}t^2r(t)$ with $\|r(t)\|\leq M$ for all $t\in [0,\delta]$. 
  \end{enumerate} 
  If this holds for all $\bar\lambda\in \pvpd$, then we say $\pvpd$ has the $\usotp$-property.
\end{defn}

The first requirement in \cref{def-usotp} is equivalent to the inner semicontinuity (isc) of the set-valued mapping $\lambda \mapsto \lin(T_{\pvpd}(\lambda))$ at $\bar\lambda$ relative to $\pvpd$. Although the set-valued mapping $T_{\pvpd}(\lambda)$ is isc everywhere relative to $\pvpd$ (due to the convexity of $\pvpd$), the same property does not hold for $\lin(T_{\pvpd}(\lambda))$ in general. Moreover, condition (ii) in \cref{def-usotp} necessarily requires the outer second order tangent set $T_{\pvpd}^2(\lambda,v) := \{w: \exists~t_k \downarrow 0 \, \text{such that} \, \dist(\lambda+t_kv+\frac12t_k^2 w,\pvpd) = o(t_k^2)\}$ to be nonempty for all $\lambda \in V$ and $v\in S(\lambda)$ with $\|v\|=1$.

\begin{defn}
\label{defn3-21}
  We say that $\vp$ is $C^2$-strictly (fully) $u$-decomposable at $\bar x$ for $\bar\lambda$, if $\vp$ is $C^2$-strictly (fully) decomposable at $\bar x$ for $\bar\lambda$ and $\pvpd$ has the $\usotp$-property at $\bar \lambda$.
\end{defn}

The $\usotp$-property holds for a large variety of sets including the important class of $C^2$-cone reducible sets, see \cite[Definition 3.135]{bonnans2013perturbation}.
\begin{defn} \label{def:cone-red}
  A closed set $\mathcal S \subseteq \Rn$ is said to be $C^2$-pointed with respect to the pair $(G,\mathcal K)$, at a point $\bar s \in \mathcal S$, if 
  \begin{itemize}
  \item $\mathcal K \subseteq \Rm$ is a closed, convex set and we have $\lin(T_{\mathcal K}(G(\bar s)))=\{0\}$. 
  \item There is a neighborhood $U$ of $\bar s$ such that $G : U \to \Rm$ is $C^2$, $DG(\bar s)$ is onto, and the set
  $\mathcal S$ can be represented as $\mathcal S \cap U = \{s\in U:G(s)\in \mathcal K\}$.
\end{itemize}  
Here, we identify $\R^0$ with $\{0\}$ to include the case where $\bar s$ is in the relative interior of $\cS$.
If, in addition, $\mathcal K -G(\bar s)$ is a closed, convex, pointed cone, then $\mathcal S$ is called $C^2$-cone reducible to $\mathcal K$ at $\bar s$.
\end{defn}

In \cref{def:cone-red} and similar to \cref{def:decomp}, we can typically assume $G(\bar s) = 0$  (without loss of generality). In this case, the condition $\lin(T_{\mathcal K}(G(\bar s))) = \{0\}$ is equivalent to $\lin(\mathcal K) = \{0\}$, i.e., the set $\mathcal K$ is required to be pointed. We now verify that $C^2$-pointed (and thus $C^2$-cone reducible) sets satisfy the $\usotp$-property. 


\begin{prop}
  \label{uniform_soct}
 Let $\mathcal S$ be 
 $C^2$-pointed at $\bar s \in \mathcal S$. Then, $\mathcal S$ has the $\usotp$-property at $\bar s$. 
\end{prop}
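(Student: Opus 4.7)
The plan is to exploit the local representation $\mathcal{S}\cap U=\{s:G(s)\in\mathcal{K}\}$ together with the surjectivity of $DG(\bar s)$. After shrinking $U$, the Jacobian $DG(\lambda)$ remains surjective for every $\lambda\in U$. I would set
\[ S(\lambda):=\ker(DG(\lambda)), \qquad \lambda\in\mathcal{S}\cap U, \]
and verify both items of \cref{def-usotp} with this choice. Since $DG(\lambda)\Rn=\Rm$, Robinson's constraint qualification for $\mathcal{S}$ at $\lambda$ holds trivially, so \cite[Corollary 2.91]{bonnans2013perturbation} gives $T_{\mathcal{S}}(\lambda)=DG(\lambda)^{-1}T_{\mathcal{K}}(G(\lambda))$. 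Taking the lineality yields $\lin(T_{\mathcal{S}}(\lambda))=DG(\lambda)^{-1}\lin(T_{\mathcal{K}}(G(\lambda)))\supseteq\ker(DG(\lambda))$, establishing $S(\lambda)\subseteq\lin(T_{\mathcal{S}}(\lambda))$. At $\bar s$, pointedness forces $\lin(T_{\mathcal{K}}(G(\bar s)))=\{0\}$ so that $\lin(T_{\mathcal{S}}(\bar s))=\ker(DG(\bar s))=S(\bar s)$. Convergence $S(\lambda)\to S(\bar s)$ as $\lambda\to\bar s$ is then immediate from the continuity of the orthogonal projector $I-DG(\lambda)^\top(DG(\lambda)DG(\lambda)^\top)^{-1}DG(\lambda)$ onto $\ker(DG(\lambda))$, which is well defined on $U$ since $DG$ has constant rank $m$ there.

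For part (ii) I would construct the required path via the implicit function theorem applied to a level set of $G$. Let $P(\lambda):=DG(\lambda)^\top(DG(\lambda)DG(\lambda)^\top)^{-1}$ be the smooth right pseudoinverse, and for $\lambda\in\mathcal{S}\cap U$, $v\in S(\lambda)$ with $\|v\|=1$, define
\[ \Phi(t,u):=G(\lambda+tv+P(\lambda)u)-G(\lambda). \]
Since $\Phi(0,0)=0$ and $\partial_u\Phi(0,0)=DG(\lambda)P(\lambda)=I$, the IFT provides a $C^2$ solution $u(t)$ with $u(0)=0$. Setting $\xi(t):=\lambda+tv+P(\lambda)u(t)$, the identity $G(\xi(t))\equiv G(\lambda)\in\mathcal{K}$ ensures $\xi(t)\in\mathcal{S}$. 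Differentiating twice and using $DG(\lambda)v=0$ yields $u'(0)=0$ and $u''(0)=-D^2G(\lambda)[v,v]$, hence $u(t)=O(t^2)$. Defining $r(t):=2P(\lambda)u(t)/t^2$ provides the desired expansion $\xi(t)=\lambda+tv+\tfrac12 t^2 r(t)$, with $r(t)\to -P(\lambda)D^2G(\lambda)[v,v]$ as $t\downarrow 0$.

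The main obstacle, and the only nontrivial technical point, is establishing that $\delta$ and $M$ can be chosen \emph{uniformly} in $(\lambda,v)$ rather than pointwise. I would invoke a quantitative version of the IFT: the norms of $P(\lambda)$ and $D^2G(\lambda)$ are uniformly bounded on a compact neighborhood $\overline{V}$ of $\bar s$ inside $U$, and $\partial_u\Phi(0,0)=I$ has inverse of norm $1$ independently of $(\lambda,v)$. Combined with the fact that the unit sphere in $\ker(DG(\lambda))$ is compact and varies continuously with $\lambda$, a standard uniform IFT argument produces a single $\delta>0$ for which $u(\cdot)$ is defined on $[0,\delta]$ and satisfies a uniform Taylor bound $\|u(t)\|\leq \tfrac12\|u''\|_\infty t^2$, yielding the uniform bound $\|r(t)\|\leq M$ on $[0,\delta]$ for all admissible $(\lambda,v)$.
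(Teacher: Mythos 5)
Your proof is correct, and part (i) coincides with the paper's argument (the same choice $S(\lambda)=\ker(DG(\lambda))$, the same use of \cite[Corollary 2.91]{bonnans2013perturbation} to identify $\lin(T_{\mathcal S}(\lambda))$, and continuity of the kernel via constant rank). For part (ii), however, you take a genuinely different route. The paper never solves an implicit equation: it takes the explicit second-order candidate $s+tv-\tfrac12 t^2 DG(s)^\dagger D^2G(s)[v,v]$, observes via Taylor expansion that $G$ of this point differs from $G(s)\in\mathcal K$ by a uniformly bounded $O(t^2)$ term, invokes the error bound $\dist(\cdot,\mathcal S)\le\kappa_1\dist(G(\cdot),\mathcal K)$ coming from Robinson's constraint qualification (\cite[Theorem 2.87]{bonnans2013perturbation}), and then defines $\xi(t)$ as a metric projection of the candidate onto $\mathcal S$. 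Uniformity of $\delta$ and $M$ is then immediate from the single constant $\kappa_1$ and compactness bounds on $DG^\dagger$ and $D^2G$. Your construction instead keeps the path exactly on the fiber $G^{-1}(G(\lambda))\subseteq\mathcal S\cap U$ via the implicit function theorem, which yields a smooth curve and avoids the projection step, but shifts the entire burden of uniformity onto a quantitative IFT; that step is standard (uniform bounds on $D^2G$, on $P(\lambda)$, and on the inverse of $\partial_u\Phi$ near the origin give a uniform domain of definition and a uniform $C^2$ bound on $u$), yet you only sketch it, and it is the one place a referee would ask for detail. Two minor points to add for completeness: you should shrink $\delta$ so that $\xi(t)$ stays in $U$ (the representation $\mathcal S\cap U=\{s:G(s)\in\mathcal K\}$ is only local), and the bound $\|u(t)\|\le\tfrac12\|u''\|_\infty t^2$ requires the uniform bound on $u''$ over all of $[0,\delta]$, not merely at $t=0$, which again follows from the quantitative IFT. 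In short: both proofs work; the paper's error-bound-plus-projection argument makes the uniformity essentially free, while yours produces a cleaner (exact, smooth) path at the cost of a heavier analytic tool.
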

\begin{proof} Let the reduction pair $(G,\mathcal K)$ and the neighborhood $U$ (of $\bar s$) be given as in \cref{def:cone-red}.
The surjectivity of $DG(\bar\lambda)$ implies that Robinson's constraint qualification $0 \in \mathrm{int}\{G(\bar s) + DG(\bar s)\Rn - \mathcal K\}$ holds at $\bar s$. Hence, \cite[Theorem 2.87 and Corollary 2.91]{bonnans2013perturbation} are applicable and (adjusting $U$ if necessary) there is $\kappa_1>0$ such that
  \begin{align}
      \label{eq_lipschitz_U_K}
\dist(s, \mathcal S)\leq \kappa_1 \dist(G(s),\mathcal K) \quad \forall~s \in U.  
  \end{align}
  We also have $T_{\mathcal S}(s)=DG(s)^{-1}(T_{\mathcal K}(G(s)))$ and $\lin(T_{\mathcal S}(s))=DG(s)^{-1}(\lin(T_{\mathcal K}(G(s))))$  which implies $\lin(T_{\mathcal S}(\bar s)) = \ker(DG(\bar s))$ and $\ker(DG(s))\subseteq \lin(T_{\mathcal S}(s))$ for all $s \in U$.
  %
Therefore, we may define $S(s) :=\ker(DG(s))$ in \cref{def-usotp}. The convergence $S(s) \to \ker(DG(\bar s))$ then follows from \cite[Theorem 4.32 (b)]{rockafellar2009variational}.
Let $V$ be a neighborhood of $\bar s$ and let $\veps >0$ be given such that $\B_\veps(s)\subseteq U$ for all $s\in V$. We further assume that $DG(s)$ has full row rank for all $s \in V$. Let $s \in V$ and $v\in S(s)$ with $\|v\| = 1$ be arbitrary. We set $q :=-DG(s)^\dagger D^2G(s)[v,v]$ where $DG(s)^\dagger = DG(s)^\top(DG(s)DG(s)^\top)^{-1}$ is the Moore-Penrose inverse of $DG(s)$. Then, using Taylor expansion, we may write
  \begin{align*}  G(s+tv+\tfrac{1}{2}t^2q) &= G(s) + \tfrac12 t^2 e_{s,v}(t). \end{align*}
Notice, due to the continuity of $D^2G$ and $DG^\dagger$, the error term $e_{s,v}$ satisfies $e_{s,v}(t) = o(1)$ uniformly for all $s$ and $v$. In particular, there is $C > 0$ such that $\|e_{s,v}(t)\| \leq C$ for all $s \in V$, $\|v\| = 1$, and $t \in [0,\veps]$. In addition, there exists $\delta \in (0,\veps]$ such that 
%
  \[   s + tv + \tfrac12 t^2 q = s+tv-\tfrac{1}{2}t^2DG(s)^\dagger D^2G(s)[v,v] \in \B_{\veps}(s) \quad \forall~s \in V, \;\; \|v\| = 1, \;\; t \in [0,\delta].             \]
 Thus, applying \cref{eq_lipschitz_U_K}, we can infer
  \begin{align*}
   \dist(s+tv+\tfrac{1}{2}t^2q,\mathcal S)&\leq \kappa_1\dist(G(s+tv+\tfrac{1}{2}t^2q),\mathcal K)\\ & =\kappa_1\dist(G(s)+\tfrac{1}{2}t^2e_{s,v}(t),\mathcal K) \leq \frac{\kappa_1}{2}t^2\|e_{s,v}(t)\|\leq \frac{C\kappa_1}{2}t^2.
  \end{align*}
  for all $s \in \cS \cap V$, $v \in S(s)$, $\|v\| = 1$, $t\in [0,\delta]$. 
 Finally, let us choose the path $\xi(t)$ via $\xi(t)\in\Pi_{\mathcal S}(s+tv+\frac{1}{2}t^2q)$ and define $ r(t):=2({\xi(t)-(s+tv)})/t^2$. Then, it holds that 
  \[ \xi(t)=s+tv+\tfrac{1}{2}t^2 r(t), \quad \|r(t)\|\leq  \frac{\|\xi(t)-(s+tv+\frac{1}{2}t^2q)\|+\frac{1}{2}t^2\|q\|}{\frac{1}{2}t^2}\leq  C\kappa_1 + M_1,          \]
  where we assume $\|q\| = \|DG(s)^{\dagger}D^2G(s)[v,v]\|\leq M_1$ for all {$s \in V$}, $\|v\| = 1$. (Existence of such $M_1$ again follows from continuity). It suffices to take $M:=C\kappa_1+M_1$.
\end{proof}

$C^2$-cone reducible sets allow covering many practical examples, such as the second-order cone, the set of positive semidefinite matrices, or polyhedral sets.


\subsubsection{Main Result and the Strict Second Subderivative} 
We now present one of the main results of this work. In particular, if $\pvpd$ has the $\usotp$-property, we show that the strict second subderivative of $\vp$ at $\bar x$ is the sum of the quadratic form related to $F$ and $\bar \lambda$ and the indicator function of the affine hull of the critical cone. 
\begin{thm}
  \label{thm_pdecom_strict_twice}
  Let $\vp$ be $C^2$-strictly $u$-decomposable at $\bar x$ for $\bar\lambda$ with decomposition pair $(\vp_d,F) \equiv (\sigma_\pvpd,F)$. Then, it holds that:
  \begin{align*}  \rd_s^2\vp(\bar x|DF(\bar x)^\top\bar\lambda)(h) = \langle \bar\lambda,D^2F(\bar x)[h,h]\rangle + \iota_{\aff(DF(\bar x)^{-1}N_{\pvpd}(\bar\lambda))}(h) \quad \forall~h \in \Rn. \end{align*}
%
%
\end{thm}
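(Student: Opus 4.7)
My plan is to reduce the computation of $\rd^2_s\vp(\bar x | DF(\bar x)^\top \bar\lambda)$ to that of $\rd^2_s\sigma_\pvpd(0|\bar\lambda)$ via \cref{thm-strict-chain}, and then to pin down the latter using the $\usotp$-property. By the preceding proposition, we may write $\rd^2_s\sigma_\pvpd(0|\bar\lambda) = \iota_\cS$ for some closed cone $\cS \subseteq \Rm$, and \cref{thm-strict-chain} delivers the lower bound
\[
\rd^2_s\vp(\bar x|DF(\bar x)^\top\bar\lambda)(h) \geq \iota_\cS(DF(\bar x)h) + \langle \bar\lambda, D^2F(\bar x)[h,h]\rangle,
\]
together with equality on the set $DF(\bar x)^{-1}\aff(N_\pvpd(\bar\lambda))$. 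Applying \cref{thm-strict-chain} to $\sigma_\pvpd$ itself with trivial inner mapping $F = \mathrm{id}$ (for which \cref{eq:strict-cq} is automatic) yields $\rd^2_s\sigma_\pvpd(0|\bar\lambda)(w) = 0$ for every $w \in \aff(N_\pvpd(\bar\lambda))$, i.e., $\aff(N_\pvpd(\bar\lambda)) \subseteq \cS$. Thus the theorem reduces to proving the reverse inclusion $\cS \subseteq \aff(N_\pvpd(\bar\lambda))$ and to identifying $DF(\bar x)^{-1}\aff(N_\pvpd(\bar\lambda))$ with $\aff(DF(\bar x)^{-1}N_\pvpd(\bar\lambda))$.

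The central step is establishing $\cS \subseteq \aff(N_\pvpd(\bar\lambda))$ through a direct use of the $\usotp$-property. Given $w \in \cS$, I select sequences $x^k \to 0$, $\lambda^k \to \bar\lambda$ with $\lambda^k \in \partial\sigma_\pvpd(x^k)$, $t_k \downarrow 0$, and $w^k \to w$ along which $\Delta^2_{t_k}\sigma_\pvpd(x^k|\lambda^k)(w^k) \to 0$. The characterization of the subdifferential of a support function gives $x^k \in N_\pvpd(\lambda^k)$ and $\sigma_\pvpd(x^k) = \langle \lambda^k, x^k\rangle$. For any unit vector $\bar v \in \lin(T_\pvpd(\bar\lambda))$, the inner semicontinuity of $S(\cdot)$ at $\bar\lambda$ (condition (i) of the $\usotp$-property) furnishes $v^k \in S(\lambda^k)$ with $v^k \to \bar v$ and, after renormalization, $\|v^k\| = 1$. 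The tangent/normal cone duality forces $\langle v^k, x^k\rangle = 0$ because $v^k \in \lin(T_\pvpd(\lambda^k)) \subseteq T_\pvpd(\lambda^k) \cap (-T_\pvpd(\lambda^k))$ while $x^k \in T_\pvpd(\lambda^k)^\circ$. Condition (ii) of the $\usotp$-property then supplies a path $\xi^k(t) = \lambda^k + tv^k + \tfrac12 t^2 r^k(t) \in \pvpd$ with $\|r^k(t)\| \leq M$ on $[0,\delta]$. For $k$ large enough we have $t_k \leq \delta$, and inserting $\mu = \xi^k(t_k) \in \pvpd$ into the linear support bound $\sigma_\pvpd(x^k + t_k w^k) \geq \langle \mu, x^k + t_k w^k\rangle$, subtracting $\sigma_\pvpd(x^k) + t_k\langle\lambda^k,w^k\rangle$, and dividing by $\tfrac12 t_k^2$ yields
\[
\Delta^2_{t_k}\sigma_\pvpd(x^k|\lambda^k)(w^k) \geq 2\langle v^k, w^k\rangle + \langle r^k(t_k), x^k + t_k w^k\rangle,
\]
where the crucial cancellation of the a priori $\mathcal O(1/t_k)$ term rests on $\langle v^k, x^k\rangle = 0$. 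Passing to the limit gives $0 \geq 2\langle \bar v, w\rangle$, and swapping $\bar v$ with $-\bar v \in \lin(T_\pvpd(\bar\lambda))$ delivers the reverse inequality, so $\langle \bar v, w\rangle = 0$ for all $\bar v \in \lin(T_\pvpd(\bar\lambda))$. Hence $w \in \lin(T_\pvpd(\bar\lambda))^\perp = \aff(N_\pvpd(\bar\lambda))$, as desired.

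To close, I will verify the set-theoretic identity $DF(\bar x)^{-1}\aff(N_\pvpd(\bar\lambda)) = \aff(DF(\bar x)^{-1}N_\pvpd(\bar\lambda))$ under \cref{eq:strict-cq}: given $x$ in the left-hand side, write $DF(\bar x)x = k_1 - k_2$ with $k_i \in N_\pvpd(\bar\lambda)$; use \cref{eq:strict-cq} to produce $y \in \Rn$ and $l \in N_\pvpd(\bar\lambda)$ satisfying $k_2 = DF(\bar x)y - l$, so that $DF(\bar x)y = k_2 + l \in N_\pvpd(\bar\lambda)$ and $DF(\bar x)(x+y) = k_1 + l \in N_\pvpd(\bar\lambda)$ by conic closure under addition; then $x = (x+y) - y$ lies in $\aff(DF(\bar x)^{-1}N_\pvpd(\bar\lambda))$. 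Substituting $\cS = \aff(N_\pvpd(\bar\lambda))$ into the lower bound and invoking the equality clause of \cref{thm-strict-chain} on this set yields the claimed formula. I expect the main technical obstacle to be the central computation -- choreographing the $k$-dependent $\usotp$-paths $\xi^k$ so that the uniform quadratic bound $\|r^k(t_k)\| \leq M$ carries over in the limit, and verifying that the orthogonality $\langle v^k, x^k\rangle = 0$ really does annihilate the otherwise dominant first-order term inside the support function expansion.
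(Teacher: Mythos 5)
Your proof is correct, and it shares the paper's overall skeleton: reduce to $\rd^2_s\sigma_\pvpd(0|\bar\lambda)$ via \cref{thm-strict-chain}, exploit the $\usotp$-paths together with the orthogonality $\langle v^k,x^k\rangle=0$ (which follows from $x^k\in N_\pvpd(\lambda^k)$ and $S(\lambda^k)\subseteq\lin(T_\pvpd(\lambda^k))$), and identify $DF(\bar x)^{-1}\aff(N_\pvpd(\bar\lambda))$ with $\aff(DF(\bar x)^{-1}N_\pvpd(\bar\lambda))$. Where you genuinely diverge is in the central estimate. The paper's \cref{lemma-path2ineq} tests the support function against the path point $\xi(s^*)$ for an adaptively chosen $s^*=\min\{\delta,\,t\langle v,w\rangle/(M\|x+tw\|)\}$ that is decoupled from $t$ and eventually much larger than $t$; with the single test direction $v=\Pi_{\lin(T_\pvpd(\bar\lambda))}(w)$ this yields the quantitative divergence $\Delta^2_{t_k}\sigma_\pvpd(x^k|\lambda^k)(w^k)\geq \tfrac{\epsilon}{2}\min\{\delta/t_k,\,\epsilon/(2M\|x^k+t_kw^k\|)\}\to\infty$ directly for every $w\notin\aff(N_\pvpd(\bar\lambda))$. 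You instead freeze the path parameter at $s=t_k$, which only produces the bounded lower bound $2\langle v^k,w^k\rangle+\langle r^k(t_k),x^k+t_kw^k\rangle$, and you recover the conclusion by testing against $\pm\bar v$ for all $\bar v\in\lin(T_\pvpd(\bar\lambda))$ and invoking the fact (the unlabeled proposition preceding \cref{def-usotp}) that $\rd^2_s\sigma_\pvpd(0|\bar\lambda)$ is $\{0,\infty\}$-valued, so a finite value forces the value $0$ and hence $w\perp\lin(T_\pvpd(\bar\lambda))$. Both routes are sound; the paper's gives an explicit blow-up rate and does not lean on positive homogeneity at this stage, while yours is more elementary in that it avoids the optimization over the path parameter. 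Your direct primal verification of $DF(\bar x)^{-1}\aff(\cN)=\aff(DF(\bar x)^{-1}\cN)$ from the strict constraint qualification (writing $k_2=DF(\bar x)y-l$ and shifting by $y$) is likewise a valid, arguably cleaner alternative to the polarity argument of \cref{lem-affhull}.
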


Setting $\bar v := DF(\bar x)^\top\bar\lambda$ and as discussed before, the set $DF(\bar x)^{-1}N_{\pvpd}(\bar\lambda)$ coincides with the critical cone $\mathcal C(\bar x,\bar v)$ defined in \cref{prop:soc:decomp-subquad}. We further note that 
the formula in \cref{thm_pdecom_strict_twice} is not true in general, as illustrated by the following example.
\begin{example}
  Let us consider 
  \[ C=\{ x \in \R^2: x_2\geq \tfrac{2}{3}|x_1|^\frac32\},\quad F=I,\quad \vp(x)=\vp_d(x)=\sigma_C(x), \]
  and $\bar x=0$, $\bar\lambda=0$. Then, for $x_1,x_2 \neq 0$ and $x := (|x_1|,-|x_2|)^\top$, we can compute
  \[ \vp(x)=\sup_{y\in C}~|x_1|y_1 - |x_2|y_2=\sup_{t\geq 0}~t|x_1|-\frac{2}{3}t^{\frac32}|x_2|=\frac{1}{3}\frac{|x_1|^{3}}{x_2^{2}}. \]      
  Clearly, $\vp$ is twice differentiable at $x$, and we have
  \[     \nabla \vp(x) = \frac{|x_1|}{x_2^2}\begin{pmatrix} x_1 \\ -\frac23 \frac{x_1^2}{x_2} \end{pmatrix} \quad \text{and} \quad   \nabla^2\vp(x)=\frac{2|x_1|}{x_2^2}\begin{pmatrix}
     1  & -\frac{x_1}{x_2}  \\
     -\frac{x_1}{x_2}  & \frac{x_1^2}{x_2^2}
  \end{pmatrix}.                            \]
  Defining $x^k=(\frac{1}{k^3},-\frac{1}{k})^\top$ and $\lambda^k=(\frac{1}{k^{4}},\frac{2}{3k^6})^\top$, it follows $x^k, \lambda^k \to 0$, $\lambda^k = \nabla\vp(x^k)$, and $\nabla^2\vp(x^k) \to 0$.
   Moreover, by \cite[Example 13.8]{rockafellar2009variational}, it holds that $\rd^2\vp(x^k|\lambda^k)(w)=\langle w,\nabla^2\vp(x^k)w\rangle$. Therefore, according to \cite[Lemma 3.2]{hang2024chain}, we obtain:
   \[  0\leq \rd^2_s\vp(\bar x|\bar\lambda)(w)\leq {\lim}_{k\to\infty}\,\langle w,\nabla^2\vp(x^k) w\rangle=0 \quad \forall~w \in \Rn,             \]
   which implies $\rd^2_s\vp(\bar x|\bar\lambda)=0$. However, due to $\aff(N_{C}(\bar\lambda))=\aff(\{(0,t):t\leq 0\}) \neq \R^2$, the conclusion in \cref{thm_pdecom_strict_twice} does not hold. In fact, setting $v := (1,0)^\top$ and following \cite[Example 3.29]{bonnans2013perturbation}, we have $v \in \lin(T_C(\bar\lambda))$ and $T_{C}^2(\bar\lambda,v)=\emptyset$. Consequently, the set $C$ does not have the $\usotp$-property at $\bar\lambda$.
\end{example}

Our first goal is to control $\rd^2_s\sigma_\pvpd(0|\bar\lambda)$ along directions $h \notin \aff(N_{\pvpd}(\bar\lambda))$. We start with an auxiliary result.
\begin{lem}
  \label{lem-affhull}
  Let $\cN\subseteq \R^m$ be a closed convex cone and let $A:\R^n\to\R^m$ be a linear mapping. Suppose that the constraint qualification $\cR(A)-\cN=\R^m$ is satisfied. Then, it holds that $A^{-1}\aff(\cN)=\aff(A^{-1}\cN)$.
\end{lem}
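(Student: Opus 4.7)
The plan is to show both inclusions separately. The easy direction is $\aff(A^{-1}\cN) \subseteq A^{-1}\aff(\cN)$: any affine combination $x = \sum_i \mu_i x_i$ with $x_i \in A^{-1}\cN$ and $\sum_i \mu_i = 1$ satisfies $Ax = \sum_i \mu_i Ax_i \in \aff(\cN)$, so $x \in A^{-1}\aff(\cN)$. This direction uses neither the conic structure nor the constraint qualification.

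For the nontrivial direction, I would first exploit that both sides are in fact linear subspaces. Since $\cN$ is a convex cone containing $0$, its affine hull equals its linear span $\cN - \cN$, which is a linear subspace; hence $A^{-1}\aff(\cN)$ is a linear subspace of $\R^n$. Likewise, $A^{-1}\cN$ is a convex cone containing $0$, so $\aff(A^{-1}\cN) = A^{-1}\cN - A^{-1}\cN$. Thus it suffices to show that every $x \in A^{-1}\aff(\cN)$ can be written as the difference of two elements of $A^{-1}\cN$.

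The key step uses the constraint qualification. Given $x$ with $Ax \in \aff(\cN) = \cN - \cN$, write $Ax = n_1 - n_2$ with $n_1,n_2 \in \cN$. By $\cR(A) - \cN = \R^m$, there exist $w \in \R^n$ and $m \in \cN$ such that $n_2 = Aw - m$, equivalently $Aw = n_2 + m$. Since $\cN$ is closed under addition, $Aw \in \cN$, so $w \in A^{-1}\cN$. Then
\[
A(x+w) = Ax + Aw = (n_1 - n_2) + (n_2 + m) = n_1 + m \in \cN,
\]
so $x + w \in A^{-1}\cN$ as well. Consequently, $x = (x+w) - w \in A^{-1}\cN - A^{-1}\cN = \aff(A^{-1}\cN)$, which completes the proof.

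The main conceptual point is where the constraint qualification enters: it is precisely what allows us to absorb the ``bad'' piece $n_2$ of the affine decomposition into the range of $A$ modulo a further element of $\cN$. Without this CQ, the reverse inclusion may fail because affine combinations in $\cN$ can lie outside $\mathcal{R}(A) + \cN$, and there is then no mechanism to pull them back to affine combinations of preimages. All other steps are straightforward bookkeeping exploiting that convex cones containing the origin coincide with their affine hulls up to taking differences.
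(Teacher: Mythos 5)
Your proof is correct, but it takes a genuinely different route from the paper. The paper argues entirely on the dual side: it observes that all four sets involved are closed convex cones, passes to polars via the identities $[\aff(K)]^\circ=\lin(K^\circ)$ and $[A^{-1}K]^\circ = A^\top K^\circ$ (up to closure), and reduces the claim to $\lin(A^\top\cN^\circ)\subseteq A^\top\lin(\cN^\circ)$, which it derives from the polar reformulation $\ker(A^\top)\cap\cN^\circ=\{0\}$ of the constraint qualification (\cref{nprop4-3}). You instead work primally: using $\aff(K)=K-K$ for a convex cone $K\ni 0$, you reduce the claim to showing that any $x$ with $Ax=n_1-n_2$, $n_1,n_2\in\cN$, is a difference of elements of $A^{-1}\cN$, and the constraint qualification supplies exactly the vector $w$ with $Aw\in\cN$ needed to absorb $n_2$. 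Your argument is more elementary (no bipolar theorem, and in fact no closedness of $\cN$ is ever used), and it makes the role of the constraint qualification more transparent; the paper's dual argument has the advantage of reusing \cref{nprop4-3} and the polar machinery that recurs elsewhere in the text. One cosmetic remark: you overload the symbol $m$ as both the ambient dimension of $\R^m$ and an element of $\cN$, which should be renamed.
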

\begin{proof}
  Since $\mathcal N$, $A^{-1}\mathcal N$, $A^{-1}\aff(\mathcal N)$, and $\aff(A^{-1}\mathcal N)$ are all closed convex cones, the condition $\aff(A^{-1}\cN)=A^{-1}\aff(\cN)$ is equivalent to
  \[ \lin(A^\top \mathcal N^\circ) = [\aff(A^{-1}\mathcal N)]^\circ = [A^{-1}\aff(\mathcal N)]^\circ = A^\top\lin(\mathcal N^\circ) \] 
  where we used \cite[Corollary 6.33, Proposition 6.4, 6.34, and 6.36]{bauschke2017convex}. We clearly have $A^\top\lin(\mathcal N^\circ) \subseteq \lin(A^\top \mathcal N^\circ)$. By \cref{nprop4-3}, the condition $\cR(A)-\cN=\R^m$ is equivalent to $\mathrm{ker}(A^\top)\cap \mathcal N^\circ = \{0\}$ which allows proving $ \lin(A^\top \mathcal N^\circ) \subseteq A^\top\lin(\mathcal N^\circ)$. 
\end{proof}
\begin{lem}
  \label{lemma-path2ineq}
  Let $\pvpd\subseteq\R^m$ be a closed, convex set and let $t > 0$, $w\in \R^n\backslash\{0\}$, $x\in\dom(\sigma_\pvpd)$, and $\lambda\in \partial \sigma_\pvpd(x)$ be given. Assume there are constants $\delta, M>0$ and a vector $v\in \R^m$, $\|v\|=1$, such that:
\begin{enumerate}[label=\textup{\textrm{(\roman*)}},topsep=0pt,itemsep=0ex,partopsep=0ex]
    \item There exists a path $\xi:[0,\delta]\to\pvpd$, $\xi(s)=\lambda+sv+\frac{s^2}{2}r(s)$ with $\|r(s)\|\leq M$ for all $s\in [0,\delta]$.
    \item It holds that $\langle v,x\rangle=0$ and $\langle v, w\rangle>0$.
  \end{enumerate}
  Then, we have $ \Delta_{t}^2\sigma_\pvpd(x|\lambda)(w)      
     \geq \langle v,w\rangle \cdot \min\{\frac{\delta}{t},\frac{\langle v,w\rangle}{M\|x+tw\|} \}$. 
\end{lem}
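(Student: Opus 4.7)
The plan is to use the path $\xi$ as a family of test vectors inside the support function and then optimize the resulting lower bound over the path parameter $s \in [0,\delta]$.

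The starting point is the Fenchel identity $\sigma_\pvpd(x) = \langle \lambda,x\rangle$, which holds because $\lambda \in \partial\sigma_\pvpd(x)$ for the support function $\sigma_\pvpd$. For each $s \in [0,\delta]$ the point $\xi(s) = \lambda + sv + \tfrac{s^2}{2}r(s)$ lies in $\pvpd$, so testing against it yields
\[ \sigma_\pvpd(x+tw) \;\geq\; \langle \xi(s), x+tw\rangle = \langle \lambda, x+tw\rangle + s\langle v,x+tw\rangle + \tfrac{s^2}{2}\langle r(s), x+tw\rangle. \]
Substituting this estimate into the second-order difference quotient, canceling $\langle\lambda,x\rangle + t\langle\lambda,w\rangle$ via the Fenchel identity, using $\langle v,x\rangle = 0$ to reduce $s\langle v,x+tw\rangle$ to $st\langle v,w\rangle$, and bounding the remainder by Cauchy--Schwarz together with $\|r(s)\| \leq M$, I obtain the parametric lower bound
\[ \Delta_t^2 \sigma_\pvpd(x|\lambda)(w) \;\geq\; \frac{2sa}{t} - \frac{s^2 b}{t^2}, \qquad a := \langle v,w\rangle,\; b := M\|x+tw\|, \]
valid for every $s \in [0,\delta]$. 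Here $b > 0$: indeed $x+tw = 0$ combined with $\langle v,x\rangle = 0$ would force $\langle v,w\rangle = 0$, contradicting hypothesis~(ii).

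The last step is the (entirely elementary) maximization of the concave quadratic $s \mapsto 2sa/t - s^2 b/t^2$ on the interval $[0,\delta]$. Its unconstrained maximizer is $s^\star = at/b$ with optimal value $a^2/b = a\cdot(a/b)$. If $s^\star \leq \delta$, equivalently $a/b \leq \delta/t$, this choice is admissible and produces the bound $a\cdot(a/b)$. If $s^\star > \delta$, the quadratic is still increasing at the endpoint $s = \delta$, and an immediate rearrangement of $a/b > \delta/t$ shows that $2\delta a/t - \delta^2 b/t^2 \geq \delta a/t = a\cdot(\delta/t)$. In both regimes the right-hand side equals $a\cdot\min\{\delta/t,\,a/b\}$, which is precisely the claimed inequality. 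The argument presents no real obstacle beyond this clean case split: the two branches of the minimum simply reflect whether the unconstrained optimum in $s$ lies inside or outside $[0,\delta]$, and the only subtle check is the non-degeneracy $b>0$ needed to make the quadratic genuinely coercive from above.
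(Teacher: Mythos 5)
Your proposal is correct and follows essentially the same route as the paper: test the support function against the path point $\xi(s)$, lower-bound the resulting expression by $st\langle v,w\rangle-\tfrac{s^2}{2}M\|x+tw\|$ via Cauchy--Schwarz, and then optimize over $s\in[0,\delta]$ with the same two-case split according to whether the unconstrained maximizer lies in the interval. Your explicit check that $\|x+tw\|>0$ is a small but welcome addition that the paper leaves implicit.
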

\begin{proof}
The condition $\lambda \in \partial\sigma_\pvpd(x)$ implies $\sigma_\pvpd(x)=\langle \lambda, x\rangle$ and we obtain
  \begin{equation}
    \label{eq_vp_minus_vpf}
      \sigma_\pvpd(x+tw)-\sigma_\pvpd(x)-t\langle \lambda, w\rangle = {\sup}_{z\in\pvpd}~\langle z-\lambda,x+tw\rangle 
  \end{equation}
  Setting $g(s):=\langle sv+\frac{s^2}{2}r(s),x+tw\rangle$, it further follows $g(s)\geq st\langle v,w\rangle-\frac{Ms^2}{2}\|x+tw\|$.
  We now define $s^*:=\min\{\delta, \frac{t\langle v,w\rangle}{M\|x+tw\|}\}$ which is positive by condition (ii). If $t\langle v,w\rangle\leq \delta M\|x+tw\|$, then we have:
  \[  g(s^*)\geq {t^2|\langle v,w\rangle|^2} \, / \, {(2M\|x+tw\|)} = {s^*t} \iprod{v}{w} \, / \, 2.      \]
  Otherwise, it holds that $g(s^*)=g(\delta)\geq \delta t\langle v,w\rangle -\frac{\delta^2}{2}M\|x+tw\|\geq \frac{\delta t}{2}\langle v,w\rangle$. 
 Overall, this establishes $g(s^*)\geq \frac{t}{2}\langle v,w\rangle s^*$.
 Using \cref{eq_vp_minus_vpf}, we can then conclude $\Delta_{t}^2\sigma_\pvpd(x|\lambda)(w) \geq \frac{2}{t^2}\sup_{s\in [0,\delta]} \langle sv+\frac{s^2}{2}r(s),x+tw\rangle\geq \frac{2}{t^2} g(s^*)$.
\end{proof}


\begin{proof}[Proof of \cref{thm_pdecom_strict_twice}]  Due to \cref{thm-strict-chain}, we have 
\[ \rd^2_s\vp(\bar x|DF(\bar x)^\top\bar\lambda)(w)=\langle \bar\lambda,D^2F(\bar x)[w,w]\rangle \quad \forall~w\in DF(\bar x)^{-1}\aff(N_{\pvpd}(\bar\lambda)). \]
Thus, in view of \cref{thm-strict-chain} and noticing $\aff(DF(\bar x)^{-1}N_{\pvpd}(\bar\lambda)) = DF(\bar x)^{-1}\aff(N_{\pvpd}(\bar\lambda))$ (by \cref{lem-affhull}), we only need to show 
\[ \rd^2_s\sigma_\pvpd(0|\bar\lambda)(w)=\infty \quad \forall~w\notin \aff(\cN), \quad \cN := N_{\pvpd}(\bar\lambda). \]
%
Let us fix $w\notin \aff(\mathcal N)$. For $v:=\Pi_{\lin(T_{\pvpd}(\bar\lambda))}(w)$, it then follows $\|v\|=\epsilon>0$ where we used $[\aff(\mathcal N)]^\circ=\lin(T_{\pvpd}(\bar\lambda))$. By the definition of the strict second subderivative, we may select $x^k\to 0$, $\partial\sigma_\pvpd(x^k)\ni \lambda^k\to \bar\lambda$, $w^k\to w$, and $t_k\downarrow 0$ such that
  \[  \rd^2_{s}\sigma_\pvpd(0|\bar\lambda)(w)={\lim}_{k\to\infty}~\Delta_{t_k}^2\sigma_\pvpd(x^k|\lambda^k)(w^k).       \] 
%
Let $S(\lambda)$ denote the linear subspace in \cref{def-usotp} and define $P(\lambda):=\Pi_{S(\lambda)}$ and $v^k:=P(\lambda^k)w^k$. By continuity, we have $v^k \to v$ and we may assume $\|v^k\|\geq \frac{\epsilon}{2}$ for all $k$. By the $\usotp$-property and setting $u^k={v^k}/{\|v^k\|}$, there are constants $\delta, M>0$ such that for every $k$, there is a path $\xi^k:[0,\delta]\to\pvpd$ with $\xi^k(s)=\lambda^k+su^k+\frac{s^2}{2}r^k(s)$ and $\|r^k(s)\|\leq M$ for all $s\in [0,\delta]$. Moreover, we have $\iprod{x^k}{u^k} = 0$ (due to $x^k \in N_\pvpd(\lambda^k)$ and $S(\lambda^k)\subseteq \lin(T_\pvpd(\lambda^k))$) and $\langle w^k, u^k\rangle=\|v^k\|\geq \frac{\epsilon}{2}$. Thus, the conditions in \cref{lemma-path2ineq} are satisfied for all $k\in \n$ and we can infer
  \begin{equation*}
    \begin{aligned}
      \Delta_{t_k}^2\sigma_\pvpd(x^k|\lambda^k)(w^k)\geq\langle u^k,w^k\rangle\min\Big\{\tfrac{\delta}{t_k},\tfrac{\langle u^k,w^k\rangle}{M\|x^k+t_kw^k\|}\Big\}\geq \tfrac{\epsilon}{2}\min\Big\{\tfrac{\delta}{t_k},\tfrac{\epsilon}{2M\|x^k+t_kw^k\|}\Big\}.
   \end{aligned}
  \end{equation*}
Noticing $\|x^k+t_kw^k\|\to 0$, this yields $\Delta_{t_k}^2\sigma_\pvpd(x^k|\lambda^k)(w^k) \to \infty$ as $k\to\infty$.
\end{proof}

As an application, we are able to fully characterize the strong metric regularity of $\partial\vp$ at $(\bar x,DF(\bar x)^\top\bar\lambda)$, if $\vp$ is $C^2$-strictly $u$-decomposable at $\bar x$ for $\bar\lambda$.
\begin{thm}
  Let $\vp$ be $C^2$-strictly $u$-decomposable at $\bar x$ for $\bar\lambda$ with decomposition pair $(\sigma_\pvpd,F)$ and set $\bar v=DF(\bar x)^\top\bar\lambda$. Then, the following statements are equivalent:
\begin{enumerate}[label=\textup{\textrm{(\roman*)}},topsep=2pt,itemsep=0ex,partopsep=0ex]
    \item We have $\rd^2_s\vp(\bar x|DF(\bar x)^\top\bar\lambda)(h) > 0$ for all $h \in \Rn \backslash \{0\}$, i.e., 
    \[  \langle \bar\lambda, D^2F(\bar x)[h,h] \rangle>0 \quad \forall~h\in \aff(\mathcal C(\bar x,\bar v)) \backslash \{0\}.   \]
    \item There are neighborhoods $U$ of $\bar x$ and $V$ of $\bar v$ and $\sigma>0$ such that 
    \[   \vp(x')\geq\vp(x)+\langle \bar v,x'-x \rangle+\frac{\sigma}{2}\|x-x'\|^2,\;\forall~(x,v)\in\gph(\partial\vp) \cap(U\times V),\;\forall~x'\in U.        \]
    \item The mapping $\partial\vp$ is strongly metrically regular at $\bar x$ for $\bar v$ and $\bar x$ is a local minimizer of the mapping $\vp-\langle \bar v,\cdot \rangle$.
  \end{enumerate}
\end{thm}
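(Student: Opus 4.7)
The plan is to reduce the three conditions to the standard tilt-stability theory for prox-regular, subdifferentially continuous functions. Since $\vp$ is prox-regular and subdifferentially continuous by \cref{prop4-4}, this framework is applicable. First, I would combine \cref{thm_pdecom_strict_twice} with \cref{lem-affhull} (whose hypothesis $\cR(DF(\bar x)) - N_\pvpd(\bar\lambda) = \Rm$ follows from the strict qualification \cref{eq:strict-cq}) to obtain the closed-form expression
\begin{equation*}
\rd^2_s\vp(\bar x|\bar v)(h) = \langle \bar\lambda, D^2F(\bar x)[h,h]\rangle + \iota_{\aff(\mathcal{C}(\bar x,\bar v))}(h),
\end{equation*}
so that (i) is equivalent to the abstract positive-definiteness $\rd^2_s\vp(\bar x|\bar v)(h) > 0$ for all $h \neq 0$.

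To establish (ii) $\Leftrightarrow$ (iii), I would pass to the tilted function $g := \vp - \langle\bar v,\cdot\rangle$, which inherits prox-regularity and subdifferential continuity from $\vp$ and satisfies $0 \in \partial g(\bar x)$. Condition (ii) then becomes uniform quadratic growth of $g$ under subgradient perturbations, and (iii) reads as strong metric regularity of $\partial g$ at $(\bar x, 0)$ together with $\bar x$ being a local minimizer of $g$. The equivalence between these two is the classical Drusvyatskiy--Lewis characterization of tilt-stable local minima, valid for prox-regular, subdifferentially continuous functions; since $\partial g = \partial\vp - \bar v$, strong metric regularity of $\partial g$ at $(\bar x,0)$ is equivalent to strong metric regularity of $\partial\vp$ at $(\bar x,\bar v)$, and the growth inequality translates verbatim.

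For (i) $\Leftrightarrow$ (ii), the direction (ii) $\Rightarrow$ (i) is immediate: for any $(x, w) \in \gph(\partial g)$ near $(\bar x, 0)$ and $x' = x + t h$ near $x$, the prox-regular inequality $\langle w, x'-x\rangle \leq g(x') - g(x) + \frac{\rho}{2}\|x'-x\|^2$ combined with the growth bound in (ii) gives $\Delta^2_t g(x|w)(h) \geq \sigma\|h\|^2 - O(t)$; passing to the strict lim-inf recovers $\rd^2_s g(\bar x|0)(h) \geq \sigma\|h\|^2$. For (i) $\Rightarrow$ (ii), I would invoke lower semicontinuity, degree-$2$ positive homogeneity, and the weak-convexity bound $\rd^2_s g(\bar x|0) + \frac{\rho}{2}\|\cdot\|^2 \geq 0$ (from \cref{prop3-33}) to upgrade the pointwise positivity of $\rd^2_s g(\bar x|0)$ on the compact section $\aff(\mathcal C(\bar x,\bar v)) \cap \{\|h\|=1\}$ to a uniform modulus $\sigma > 0$ with $\rd^2_s g(\bar x|0) \geq 2\sigma\|\cdot\|^2$; a contradiction argument then extracts sequences $(x_k, v_k) \to (\bar x,\bar v)$ in $\gph(\partial\vp)$ and $x'_k \to \bar x$ violating (ii) and converts them into a violation of the uniform lower bound on the strict difference quotients.

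The main technical obstacle lies in this contradiction step: the strict difference quotient features the cross term $\langle w_k, h_k\rangle$ (with $w_k = v_k - \bar v$), whereas the failure of (ii) only constrains $g(x'_k) - g(x_k)$. A naive rescaling $t_k = \|x'_k - x_k\|$, $h_k = (x'_k - x_k)/t_k$ leaves the ratio $\|w_k\|/t_k$ a priori unbounded. The cleanest remedy is to refine the extraction by taking $x'_k$ as a near-minimizer of $g$ over a small ball about $x_k$, whereupon a first-order optimality argument combined with prox-regularity forces $\|w_k\| = O(t_k)$, rendering the cross term controllable and closing the argument.
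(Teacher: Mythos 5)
Your architecture coincides with the paper's, whose proof is essentially a concatenation of citations: \cref{thm_pdecom_strict_twice} together with \cref{lem-affhull} turns (i) into positive definiteness of $\rd^2_s\vp(\bar x|\bar v)$, the equivalence (i)$\iff$(ii) is taken from \cite[Proposition 3.3]{hang2024chain} (with the remark that prox-regularity and subdifferential continuity from \cref{prop4-4} allow replacing the moving neighborhood $U_\epsilon$ of $x$ by a fixed neighborhood of $\bar x$), and (ii)$\iff$(iii) is the tilt-stability characterization of Drusvyatskiy--Mordukhovich--Nghia, invoked exactly as you do after tilting by $\bar v$. The only place you genuinely depart from the paper is in attempting to reprove (i)$\iff$(ii) by hand, and that is where the issues lie.

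The difficulty you flag as the ``main technical obstacle'' is an artifact of reading the growth inequality in (ii) with the fixed vector $\bar v$ rather than the moving subgradient $v$. As literally printed, the $\bar v$-version applied symmetrically to two admissible base points $x_1,x_2$ forces $x_1=x_2$, so the condition would be degenerate; the intended form (the one in \cite[Proposition 3.3]{hang2024chain}) is $\vp(x')\geq\vp(x)+\iprod{v}{x'-x}+\frac{\sigma}{2}\|x'-x\|^2$. With that form, (ii)$\Rightarrow$(i) really is a one-liner, $\Delta_t^2\vp(x|v)(\tilde h)\geq\sigma\|\tilde h\|^2$, with no cross term at all --- whereas under your reading the prox-regular inequality only yields the trivial bound $\Delta_t^2 g(x|w)(h)\geq-\rho\|h\|^2$, not $\sigma\|h\|^2-O(t)$ as you claim. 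Conversely, negating (ii) directly produces $(x^k,v^k)\to(\bar x,\bar v)$ in $\gph(\partial\vp)$, $t_k:=\|x'_k-x^k\|\downarrow0$, and unit vectors $h^k:=(x'_k-x^k)/t_k$ with $\Delta_{t_k}^2\vp(x^k|v^k)(h^k)\leq 1/k$, so $\rd^2_s\vp(\bar x|\bar v)(h)\leq 0$ for some unit $h$, contradicting the uniform positive lower bound that (i), the formula of \cref{thm_pdecom_strict_twice}, and degree-two homogeneity give; no control of $\|w_k\|/t_k$ is needed and the near-minimizer device is superfluous (and, as written, not actually carried out). The one genuine technicality the paper does flag --- passing from growth on balls $B_\epsilon(x)$ around the moving point to a fixed neighborhood of $\bar x$ --- is not addressed in your proposal.
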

\begin{proof}
  This follows from \cref{thm_pdecom_strict_twice}, \cite[Proposition 3.3]{hang2024chain} and \cite[Theorem 3.7]{DruMorNhg14}. Notice that $\vp$ is prox-regular and subdifferentially continuous at $(\bar x,\bar v)$, which allows us to replace the neighborhood $U_\epsilon$ by $U$ in \cite[Proposition 3.3 (b)]{hang2024chain}. 
\end{proof}

\section{Examples and Applications}
\label{sec:exam}
In this section, we demonstrate the broad applicability of our framework. In particular, we prove that several important examples and applications have a uniformly decomposable structure. 

\subsection{Polyhedral Composite Functions} In \cite{hang2024chain}, Hang and Sarabi study composite functions of the form $\vp(x) = g(\Phi(x))$, $x \in U$, where $g : \Rm \to \Rex$ is a polyhedral function, $\Phi : \Rn \to \Rm$ is $C^2$, and $U$ is a neighborhood of $\bar x \in \dom(\vp)$. In \cite[Example 5.1(b)]{hang2023smoothness}, it is shown that such $\vp$ is $C^2$-fully decomposable at $\bar x$ under the constraint qualification \cite[Equation (2.6)]{hang2024chain}. In addition, the support set $\pvpd$ associated with the corresponding decomposition pair $(\sigma_\pvpd,F)$ is polyhedral. Since polyhedral sets are $C^2$-cone reducible everywhere, the models in \cite{hang2024chain} are $C^2$-fully $u$-decomposable.

\subsection{Cone Reducible Constraints} Let the set $\mathcal S \subseteq \Rn$ be $C^2$-cone reducible at $\bar x \in \mathcal S$  and let $(G,\mathcal K)$ be the associated reduction pair. Then, the function 
\[ \vp(x) = \iota_{\mathcal S}(x) \]
is clearly $C^2$-fully decomposable with $\vp_d = \iota_{\mathcal K} = \sigma_{\mathcal K^\circ}$ and $F = G$. Furthermore, if $\mathcal K^\circ$ itself is $C^2$-cone reducible at every point in $\mathcal K^\circ$, then $\vp$ is $C^2$-fully $u$-decomposable at $\bar x$. Since the cones $\mathcal K$ and $\mathcal K^\circ$ typically have a similar (often much simpler) geometric structure as the base set $\mathcal S$, uniform decomposability of cone reducible constraints occurs quite naturally in many applications. For instance, let us consider the set $\mathbb S_+^n$ of symmetric, positive semidefinite $n \times n$ matrices. The set $\mathbb S_+^n$ is $C^\infty$-cone reducible at every point $\bar X \in \mathbb S_+^n$,  \cite[Example 3.140]{bonnans2013perturbation}. Specifically, if $\bar X$ has rank $r < n$, the cone $\mathcal K$ of the associated reduction pair is given by $\mathcal K = \mathbb S_+^{n-r}$ and we have $\mathcal K^\circ = -\mathbb S_+^{n-r}$. Thus, $\mathcal K^\circ$ is cone reducible everywhere implying full $u$-decomposability of $\iota_{\mathbb S_+^n}$ at all $\bar X \in \mathbb S_+^n$. 

\begin{remark}
\label{remark4-1}
In the definition of $C^2$-cone reducible sets (\cref{def:cone-red}), the cone $\cK$ is required to be pointed. However, this is typically not needed as $G$ can be composed with a linear mapping projecting onto the set $\lin(\cK)^\perp$ which allows ensuring pointedness. Hence, when verifying $C^2$-cone reducibility, we do not explicitly require the cone $\cK$ to be pointed. This is also discussed in \cite[Definition 3.135]{bonnans2013perturbation}.
\end{remark}

We continue with two more practical examples of cone reducible constraints.

\subsubsection{Slices of the Second-order Cone}  

We consider slices of the second-order cone \cite{wang2011strong}:
%
    \[ \cS := \cH\cap\Lambda_+^n, \quad \cH :=\{y\in \R^{n+1}:Ay=b\}, \quad \Lambda^n_+ :=\{x\in\R^{n+1}: \|x_{2:n+1}\| \leq x_1\}, \]
    where $A \in \R^{m \times (n+1)}$ and $b \in \R^m$ are given and $x_{2:n+1} := (x_2,\dots,x_{n+1})^\top$. We further assume $\cS\neq\emptyset$. 
    
    We first discuss the case $ \cH \cap \ri(\Lambda_+^n)=\emptyset$. In this case, it holds that $\cS\subseteq\rbd(\Lambda^n_+) = \Lambda^n_+ \backslash \ri(\Lambda^n_+)$. Let $y\in \ri(\cS)$. If $y=0$, we must have $\cS=\{0\}$ since $0$ is an extreme point of $\Lambda_+^n$. Otherwise, we consider the extreme ray $L(y)$ generated by $y$. According to \cite{malik2005q}, this ray $L(y)$ is a face of $\Lambda^n_+$. By \cite[Theorem 18.1]{rockafellar1970convex}, we can then infer $\cS\subseteq L(y)$ which means $\dim(\cS)\leq 1$. In this case, it is easy to verify that $\vp=\iota_\cS$ is $C^2$-fully decomposable at every point in $\cS$ with the support set being $C^2$-cone reducible everywhere. (This support set  is actually a half space or a linear subspace). Therefore, in the following, we assume $\ri(\Lambda_+^n)\cap \cH\neq \emptyset$.
    
    Take any $\bar x\in \cS$. If $\bar x=0$, then $\cH$ can be written as $\cH=\{x\in \R^{n+1}:Ax=0\}$. According to the proof in \cite[Theorem 3.2]{wang2011strong}, there is an orthogonal matrix $P\in \R^{(n+1)\times(n+1)}$ such that $P\cS$ is one of the following: the singleton $\{0\}$, a ray, or an axis-weighted second-order cone $\Lambda^{p,\epsilon}_+\times\{0\}^{n-p}$, where $\Lambda^{p,\epsilon}_+:=\{y \in\R^{p+1}: \|y_{2:p+1}\| \leq \epsilon y_1\}$. As the first two cases ($\dim(\cS)\leq 1$) have already been discussed, we may assume $P\cS = \Lambda^{p,\epsilon}_+\times\{0\}^{n-p}$ for some $p$ and $\epsilon$. Let $D\in\R^{(n+1)\times (n+1)}$ be an invertible diagonal matrix such that $DP\cS=\Lambda^{p}_+\times\{0\}^{n-p}$. We consider the decomposition pair:
    \[ \vp_d:\R^{p+1}\times \R^{n-p}\to\Rex,\quad \vp_d(y,z):=\iota_{\Lambda^p_+}(y) +\iota_{\{0\}}(z), \quad F(x):=P^{-1}D^{-1}x. \]
    Clearly, we have $\vp=\vp_d\circ F$ and $F(\bar x)=0$ with $F$ being a nonsingular linear mapping. Hence, $\vp$ is $C^2$-fully decomposable at $\bar x$. Moreover, the support set $\pvpd$ for $\vp_d=\sigma_\pvpd$ is given by $-\Lambda^{p}_+\times \R^{n-p}$, which is $C^2$-cone reducible everywhere since $\Lambda^{p}_+$ is $C^2$-cone reducible everywhere, cf. \cite{shapiro2003sensitivity}.
  
    Next, we consider the case where $\bar x\neq 0$. If $\|\bar x_{2:n+1}\| < \bar x_1$, then $\cS$ locally agrees with $\cH$, and it is clear that $\iota_{\cS}$ is $C^2$-fully decomposable at $\bar x$ with its support set, a linear subspace, being $C^2$-cone reducible. Thus, let us assume $\|\bar x_{2:n+1}\| = \bar x_1 > 0$ (since $\bar x\neq 0$).  Without loss of generality, we assume that $A$ has full row rank. We can further assume $m\leq n-1$; otherwise we have $\dim(\cS)\leq \dim(\cH)\leq1$, which was already discussed above. Let $P_1\in \R^{(n+1)\times m}$ and $P_2\in\R^{(n+1)\times(n-m+1)}$ be matrices whose columns form an orthonormal basis of $\ker(A)^{\perp}$ and $\ker(A)$, respectively. Let $Q=P_2P_2^\top $ be the projection matrix onto $\ker(A)$. We consider the following decomposition pair:
    \[ \vp_d:\R^{m} \times \R \to \Rex,\quad \vp_d(y,z)=\iota_{\{0\}}(y) +\iota_{\R_+}(z), \quad F(x)=\begin{bmatrix}
      P_1^\top (x-\bar x)\\
      F_1(Q(x-\bar x))
    \end{bmatrix}, \]
  where $F_1(y)=\bar x_1+y_1-\sqrt{\sum_{i=2}^{n+1}(\bar x_i+y_i)^2}$. We clearly have $\vp=\vp_d\circ F$, $F(\bar x)=0$, the mapping $F$ is real analytic around $\bar x$, and $\vp_d$ is an indicator function of a polyhedral cone $\mathcal K$. The support set $\mathcal K^\circ$ is then also polyhedral and hence $C^2$-cone reducible. Therefore, we only need to verify that $DF(\bar x)$ is surjective. By the definition of $P_1$ and $P_2$, we only need to show $\nabla (F_1\circ Q)(0)=Q\nabla F_1(0)\neq 0$. It holds that
  \[   \nabla F_1(0)^\top =\begin{bmatrix}
     1 &
     -\frac{\bar x_{2:n+1}^\top}{\|\bar x_{2:n+1}\|}
  \end{bmatrix}.           \]
  %
  Let us set $\tilde x=(\bar x_1,-\bar x_2,\dots,-\bar x_{n+1})^\top$. Using $\bar x_1 = \|\bar x_{2:n+1}\|$, we have $Q\nabla F_1(0)=0$ iff $Q\tilde x=0$ or equivalently $\tilde x\in \ker(A)^\perp$. Let us now assume $\tilde x\in \ker(A)^\perp$. By assumption, we have $\ri(\Lambda^{n}_+)\cap \cH\neq \emptyset$ and we can take $y\in \ri(\Lambda^{n}_+)\cap \cH$. Due to $y\in\cS$, it follows $\bar x-y\in \ker(A)$ and we can infer 
  \[ 0 = \iprod{\tilde x}{\bar x-y} = \sum_{i=1}^{n+1}\tilde x_i(\bar x_i - y_i) = \bar x_1^2 - \bar x_1 y_1 - \sum_{i=2}^{n+1} (\bar x_i^2 - \bar x_i y_i) = \sum_{i=2}^{n+1} \bar x_i y_i - \bar x_1 y_1.   \]
  However, this implies $y_1 = \sum_{i=2}^{n+1} \frac{\bar x_iy_i}{\|\bar x_{2:n+1}\|} \leq \|y_{2:n+1}\|$ which contradicts $y\in \ri(\Lambda^n_+)$, i.e., $\|y_{2:n+1}\|<y_1$. 
  %
  Thus, it holds that $\tilde x\notin \ker(A)^\perp$ and $DF(\bar x)$ is surjective. We can conclude that $\vp$ is $C^2$-fully decomposable at $\bar x$ with decomposition pair $(\vp_d,F)$ and the support set is $C^2$-cone reducible everywhere.

\subsubsection{Matrix Intervals}  We consider the matrix interval
    \[  \cS:= [L,U] := \{X\in\bS^{n}:L\preceq X\preceq U\},\quad L,U \in \bS^n.    \]
    As usual, we assume $\cS\neq\emptyset$. We now show that $\vp=\iota_\cS$ is $C^2$-fully decomposable at every $\bar X\in \cS$ with the associated support set being $C^2$-cone reducible everywhere. Let $\alpha\in \R_+$ be sufficiently large such that $L+\alpha I$ is positive definite. By \cite[Theorem 7.6.4]{horn2012matrix}, there then exist a nonsingular matrix $S$ and a diagonal matrix $D$ such that
    \[  L+\alpha I=S^\top S,\quad U+\alpha I=S^\top DS.   \]
    Notice that
    \begin{align*}
       X \in [L,U] & \quad \iff \quad I\preceq S^{-\top}(X+\alpha I)S^{-1}\preceq D \\ & \quad \iff \quad 0\preceq  S^{-\top}(X+\alpha I)S^{-1}-I\preceq D-I.
    \end{align*}
    Due to $\cS\neq \emptyset$, we have $D-I\succeq 0$ and thus, we can select a nonsingular matrix $W$ and $r \in \{0,1,\dots,n\}$ such that $D-I=W^\top E_rW$ where $E_r = \diag({\mathds 1}_r,0_{n-r})$. This yields
    \[ X \in [L,U] \quad \iff \quad 0\preceq  (WS)^{-\top}(X+\alpha I)(WS)^{-1}-W^{-\top}W^{-1}\preceq E_r. \]
    Next, we define $F:\bS^{n}\to\bS^{n}$ as $F(X):=(WS)^{-\top}(X+\alpha I)(WS)^{-1}-W^{-\top}W^{-1}$. Clearly, $F$ is an invertible affine mapping and we have $\cS =\{X\in\bS^n: 0\preceq F(X)\preceq E_r \}$. 
    
    Given $r \in \{0,1,\dots,n\}$, we consider the following decomposition of a symmetric matrix $A \in \bS^{n}$:
    \begin{equation} \label{eq:app-decomp} A = \begin{pmatrix} A_1 & A_2\\ A_2^\top & A_3 \end{pmatrix}, \quad A_1\in\bS^{r}, \; A_2\in\R^{r\times (n-r)}, \; A_3\in\bS^{n-r}. \end{equation}
    Then, we have $0\preceq A \preceq E_r$ if and only if $0\preceq A_1\preceq I_r$, $A_2=0$ and $A_3=0$. Indeed, let $A$ be given with $0 \preceq A \preceq E_r$ and let $x = (0,x_2)^\top\in\R^{n}$ with $x_2\in \R^{n-r}$ be arbitrary. Then, by assumption, it follows $x^\top A x = 0$ which, due to $A \succeq 0$, is further equivalent to $\|Ax\| = 0$. Hence, it holds that $A_2x_2=A_3x_2=0$ for all $x_2\in \R^{n-r}$ which implies $A_2=0$ and $A_3=0$. Moreover, taking $x=(x_1,0)^\top$ with $x_1\in\R^r$ and repeating the same steps, we can infer $0\preceq A_1\preceq I_r$. The implication ``$0\preceq A_1\preceq I_r$, $A_2=0$ and $A_3=0$ $\implies$ $0\preceq A \preceq E_r$'' is immediate. 

    Based on the decomposition \cref{eq:app-decomp}, we now introduce the mapping 
    \[  G:\bS^{n}\to \bS^{r}\times \R^{r\times (n-r)}\times \bS^{n-r},\quad G(A) = (A_1,A_2,A_3). \]
    Clearly, $G$ is an invertible linear mapping and we can equivalently represent the set $\cS$ as follows $\cS=\{X\in\bS^n: G(F(X))\in \mathcal K_1\}$, where
    \[ \mathcal K_1:=\{(A_1,A_2,A_3)\in\bS^{r}\times \R^{r\times (n-r)}\times \bS^{n-r}: A_1\in [0,I_r],\, A_2=0, \,A_3=0\}.   \]
    Let $\bar X \in \mathcal S$ be arbitrary with $G(F(\bar X))=(\bar B,0,0)$ and let $1 \geq \lambda_1(\bar B) \geq \dots \geq \lambda_r(\bar B) \geq 0$ denote the eigenvalues of $\bar B$ in decreasing order. Furthermore, let $1\geq \mu_1>\dots>\mu_q \geq 0$ and $d_1,\dots,d_q$ denote the distinct eigenvalues of $\bar B$ and their multiplicities, i.e., 
    \[\mu_j = \lambda_{c_j+1}(\bar B) = \dots = \lambda_{c_j+d_j}(\bar B), \quad c_j := {\sum}_{i=1}^{j-1} d_i, \quad j = 1,\dots,q, \]
    and set $\mathcal I_j := \{c_j+1,\dots,c_j+d_j\}$, $j = 1,\dots,r$.
    For $B \in \bS^r$, let $P_1(B), P_q(B)\in \bS^{r}$ be the orthogonal projections onto the eigenspaces associated with the collection of eigenvalues $\{\lambda_i(B): i \in \mathcal I_j\}$, $j \in \{1,q\}$, respectively. (Without loss of generality, we assume $1 \neq q$). $P_1$ and $P_q$ are well-defined and real analytic on a neighborhood $\cN$ of $\bar B$, see \cite{shapiro2002differentiability}. Let $E_1\in\R^{r\times d_1}$ and $E_q\in \R^{r\times d_q}$ be  orthonormal bases for the eigenspaces corresponding to the eigenvalues $\{\lambda_i(\bar B): i \in \mathcal I_j\}$, $j \in \{1,q\}$. Then, $P_1(B)E_1$ and $P_q(B)E_q$ have full rank for all $B\in \cN$ by reducing $\cN$ if necessary. Finally, we define $Q_1(B)\in\R^{r\times d_1}$, $Q_q(B)\in \R^{r\times d_q}$ as the matrices obtained by Gram-Schmidt orthogonalization of $P_1(B)E_1$ and $P_q(B)E_q$. The mappings $Q_1$ and $Q_q$ still depend smoothly on $B$ and are orthonormal bases of the eigenspaces corresponding to the eigenvalues $\{\lambda_i(B): i \in \mathcal I_j\}$, $j \in \{1,q\}$. Locally around $G(F(\bar X))$, we can now represent $\mathcal K_1$ via: 
    
    \[ \mathcal K_1=\{(A_1,A_2,A_3)\in \cN \times \R^{r\times (n-r)}\times \bS^{n-r}: H(A_1,A_2,A_3)\in \mathcal K_2\},                  \] 
    where $H: \cN\times \R^{r\times (n-r)}\times \bS^{n-r}\to \bS^{d_1}\times \bS^{d_q} \times \R^{r\times (n-r)}\times \bS^{n-r}$ is given by
   \[ H(A_1,A_2,A_3):=(Q_1(A_1)^\top A_1Q_1(A_1)-\mu_1I,Q_q(A_1)^\top A_1Q_q(A_1)-\mu_qI, A_2,A_3) \]
    and the closed convex cone $\mathcal K_2$ is defined as  
    \[\mathcal K_2 := \begin{cases} \bS^{d_1}\times \bS^{d_q} \times \{0\} \times \{0\} & \text{if } \mu_1,\mu_q \in (0,1), \\  \bS^{d_1}_-\times \bS^{d_q} \times \{0\} \times \{0\}  & \text{if } \mu_1 = 1,\mu_q > 0, \\ \bS^{d_1}\times \bS^{d_q}_+ \times \{0\} \times \{0\}  & \text{if } \mu_1 < 1 ,\mu_q = 0, \\ \bS^{d_1}_-\times \bS^{d_q}_+ \times \{0\} \times \{0\}  & \text{if } \mu_1 = 1,\mu_q = 0.\end{cases} \]
%
    %
This construction closely follows \cite[Examples 3.98 and 3.140]{bonnans2013perturbation}. In particular, following \cite{bonnans2013perturbation}, $H$ is real analytic on a neighborhood of $G(F(\bar X))$, $DH(G(F(\bar X)))$ is surjective, and we have $H(G(F(\bar X))) = 0$. 
Thus, setting $\Xi=H\circ G\circ F$, it holds that $\Xi(\bar X)=0$,
\[ \vp(X)=\iota_\cS(X)=\iota_{\mathcal K_2}(\Xi(X)), \quad \text{for all $X\in\bS^n$ sufficiently close to $\bar X$}, \]
and $D\Xi(\bar X)$ is surjective. Furthermore, based on our earlier discussions and thanks to \cref{remark4-1}, the support set $\mathcal K_2^\circ$ can be easily shown to be $C^2$-cone reducible everywhere. This verifies that the matrix interval $\cS$ and its associated indicator function $\iota_\cS$ are $C^2$-cone reducible and $C^2$-fully $u$-decomposable at $\bar X$, respectively.

\subsection{The Ky Fan \texorpdfstring{$k$}{k}-Norm} Finally, we consider the Ky Fan $k$-norm
     $$
     \|\cdot\|_{(k)}: \mathbb{R}^{m \times n} \rightarrow \mathbb{R}_{+},\quad \vp(X):=\|X\|_{(k)}:=\sum_{i=1}^k \sigma_i(X), \quad k \in\{1, \ldots, m\},
     $$
     which denotes the sum of the $k$-largest singular values. We assume $m\leq n$ for simplicity. In \cite[Example 5.3.18]{milzarek2016numerical}, it is shown  that $\vp$ is $C^2$-fully decomposable at every $\bar X\in \R^{m\times n}$. Here, we verify that the support set $\pvpd$ associated with the respective decomposition pair $(\vp_d \equiv \sigma_\pvpd,F)$ is $C^2$-cone reducible and hence, \cref{thm_pdecom_strict_twice} is applicable to the Ky Fan $k$-norm. We consider the following cases. \\[1mm]
     \noindent\textbf{Case 1:} $\sigma_k(\bar X)>0$. In this case, the outer function $\vp_d$ defined in \cite[Example 5.3.18]{milzarek2016numerical} is given by
     \[  \vp_d:\R\times\bS^{r}\to \R,\quad \vp_d(t,S)=t+{\sum}_{j=1}^{k_0}\lambda_i(S),      \]
     where $k_0\leq r$ are two positive integers and $\lambda_1(S),\dots,\lambda_{k_0}(S)$ denote the $k_0$-largest eigenvalues of $S$. Applying von Neumann's trace inequality, we have $\pvpd=\{1\}\times\cB$ with $\cB=\{B\in \bS^r: 0\preceq B\preceq I, \tr(B)=k_0\}$ and it suffices to show that $\cB$ is $C^2$-cone reducible. We assume $k_0<r$; otherwise $\vp_d(t,S)=t+\tr(S)$ is a linear function and the support set is a singleton. Let us fix $B\in\cB$ and let $\mu_1>\dots>\mu_q$ denote the distinct eigenvalues of $B$ with multiplicities $d_1,\dots,d_q$. Let $P_1(X),\dots,P_q(X)\in \bS^{r\times r}$ denote the projections onto the eigenspaces of $X$. By continuity, the eigenvalues of $X$ stay in distinct boxes around $\mu_1,\dots,\mu_q$ if $X$ is sufficiently close to $B$. Furthermore, the mappings $P_i$, $i = 1,\dots,q$ are well-defined and real analytic on a neighborhood $\cN$ of $B$, \cite{shapiro2002differentiability}.  Let $U_1\in\R^{r\times d_1},\dots,U_q\in \R^{r\times d_q}$ be the orthonormal bases for the eigenspaces $E_1,\dots,E_q$ of $B$ corresponding to the eigenvalues $\mu_1,\dots,\mu_q$. Then, $P_1(X)U_1,\dots,P_q(X)U_q$ have full rank for all $X$ close to $B$. Let $L_1(X)\in\R^{r\times d_1},\dots,L_q(X)\in \R^{r\times d_q}$ be the matrices obtained by performing Gram-Schmidt orthogonalization on $P_1(X)U_1,\dots,P_q(X)U_q$. $L_1(X),\dots,L_q(X)$ are orthonormal bases for the eigenspaces of $X$ corresponding to eigenvalues around $\mu_1,\dots,\mu_q$. We consider the real analytic mapping:
     \[  \Xi:\cN\to \times_{i=1}^{q}\bS^{d_i},\quad \Xi(X):=[L_1(X)^\top XL_1(X)-\mu_1I,\dots,L_q(X)^\top XL_q(X)-\mu_qI].       \]
     By mimicking the proof in \cite[Example 3.140]{bonnans2013perturbation}, it can be shown that $\Xi(B)=0$ and $D\Xi(B)$ is surjective. Next, we define the closed convex cone $\cK \subseteq \times_{i=1}^{q}\bS^{d_i}$ consisting of all tuples $(Q_1,\dots,Q_q)$ satisfying 
     \begin{align*} {\sum}_{i=1}^q\tr(Q_i)=0 \quad \text{and} \quad
        \begin{cases}  Q_1\preceq  0  & \text{if } \mu_1=1  \\ 
        Q_q\succeq 0 & \text{if }  \mu_q=0.
        \end{cases} 
     \end{align*}
     Clearly and by construction, $\cB$ locally agrees with $\{X\in \bS^r: \Xi(X)\in \cK\}$. Hence, $\cB$ is $C^2$-cone reducible at $B$ for all $B\in \cB$. \\[1mm]
     \noindent\textbf{Case 2:} $\sigma_k(\bar X)=0$. Following \cite[Example 5.3.18]{milzarek2016numerical}, the outer function $\vp_d$ is given by
     \[  \vp_d:\R\times\R^{r\times s}\to \R,\quad \vp_d(t,X)=t+\|X\|_{(k_0)},      \]
     where we assume $k_0\leq r \leq s$ without loss of generality. In this case, the support set takes the form $\pvpd=\{1\}\times \cB$, where $\cB=\{B\in \R^{m\times n}: \|B\|_*\leq k_0,~\|B\|_2\leq 1 \}$ and $\|\cdot\|_*$ is the nuclear norm, cf. \cite[Exercise IV.2.12]{bhatia1997matrix}. It suffices to show that $\cB$ is $C^2$-cone reducible everywhere. Fixing $B\in \cB$, let $\mu_1>\dots>\mu_q$ denote the distinct singular values of $B$ with multiplicities $d_1,\dots,d_q$. We consider the following SVD of $B$:
     \[    B=U[\Sigma(B)~~0]V,\quad V=[V_1~~V_2]^\top.          \]
     We now define the linear operator $T:\R^{r\times s}\to \bS^{r+s}$:
     \[   T(X)=\begin{pmatrix}
       0 & X \\
       X^\top & 0
     \end{pmatrix}.    \]
     Given the SVD of $B$, the corresponding SVD of $T(B)$ is:
     \begin{align*}
       T(B)=P(B)\begin{pmatrix}
         \Sigma(B) & 0 & 0 \\
         0 & 0 & 0 \\
         0 & 0 & -\Sigma(B)
       \end{pmatrix}P(B)^\top,\quad P(B)=\frac{1}{\sqrt{2}}\begin{pmatrix}
         U & 0 & U\\
         V_1 & \sqrt{2}V_2 & -V_1
       \end{pmatrix}.
     \end{align*}
      If $\sigma_q(B)>0$, then we define the matrices $L_1(X)\in\R^{r+s\times d_1},\dots,L_q(X)\in \R^{r+s\times d_q}$ whose columns are orthonormal bases of the eigenspaces of $T(X)$ corresponding to all the eigenvalues around $\mu_1,\dots,\mu_q$. As argued in \textbf{Case 1}, the matrices $L_i(X)$ are real analytic on a neighborhood $\cN$ of $B$ for all $i\in [q]$. Consider the analytic mapping: 
      \begin{align*}
        &\Xi: \cN\to \times_{i=1}^{q}\bS^{d_i},\\
        & \Xi(X)=[L_1(X)^\top T(X)L_1(X)-\mu_1I,\dots,L_q(X)^\top T(X)L_q(X)-\mu_qI].
    \end{align*}
     By mimicking the proof in \cite[Proposition 4.3]{ding2012introduction}, $D\Xi(B)$ can be shown to be surjective. We then define the closed convex cone $\cK\subseteq \times_{i=1}^{q}\bS^{d_i}$ via $(Q_1,\dots,Q_q)\in \cK$ iff     
     \begin{align*}
      \begin{cases} Q_1\preceq 0   & \text{if } \mu_1=1, \\ \sum_{i=1}^q\tr(Q_i)\leq 0  & \text{if } \|B\|_*=k_0\end{cases} 
     \end{align*}   
     Hence, locally around $B$, it holds that $\pvpd=\{X\in \R^{r\times s}:\Xi(X)\in \cK\}$, which proves that $\pvpd$ is $C^2$-cone reducible at $B$. Next, we consider the case $\sigma_q(B)=0$. In this case, the number of columns of $L_q(B)$ is larger than $d_q$ since $T(B)$ contains additional zero eigenvalues. Thus, the matrix $L_q(X)^\top XL_q(X)$ may contain eigenvalues that are not singular values of $X$ and our previous construction of $\Xi$ is no longer applicable. However, by mimicking the strategy in \textbf{Case 1}, we can select $L_q(X)\in \R^{r\times d_q}$ and $R_q(X)\in \R^{s\times (d_q+s-r)}$ whose columns are orthonormal bases of the eigenspace of $XX^\top$ and $X^\top X$ corresponding to the eigenvalues around $0$, and therefore $L_q$ and $R_q$ are real analytic on a neighborhood of $B$. In this case, we have $\|X\|_*=\sum_{i=1}^{q-1}\tr(L_i(X)^\top T(X)L_i(X))+\|L_q(X)^\top XR_q(X)\|_*$ for all $X$ in a neighborhood $\cN$ of $B$ and we can redefine the analytic mapping $\Xi$ as follows:
     \begin{align*}
      &\Xi: \cN\to\times_{i=1}^{q-1}\bS^{d_i}\times \R^{d_q\times (d_q+s-r)},\\
      &\Xi(X)=[L_1(X)^\top T(X)L_1(X)-\mu_1I,\dots, L_q(X)^\top XR_q(X)]. 
     \end{align*}
     Similar to \cite[Example 5.3.18]{milzarek2016numerical}, it can be shown that $D\Xi(B)$ is surjective. We define the closed convex cone $\cK\subseteq\times_{i=1}^{q-1}\bS^{d_i}\times \R^{d_q\times (d_q+s-r)}$ via $(Q_1,\dots,Q_q)\in \cK$ via 
     \begin{align*}
      (Q_1,\dots,Q_q)\in \cK \quad \iff \quad \begin{cases} Q_1\preceq 0   & \text{if } \mu_1=1, \\  \sum_{i=1}^{q-1}\tr(Q_i)+\|Q_q\|_*\leq 0  & \text{if } \|B\|_*=k_0\end{cases} 
     \end{align*}  
     Locally around $B$, we then have $\pvpd=\{X\in \R^{r\times s}:\Xi(X)\in \cK\}$, which proves that $\pvpd$ is $C^2$-cone reducible at $B$ (after invoking \cref{remark4-1}).

     We refer to \cite{ding2017kyfan} for additional variational properties of the Ky Fan $k$-norm.

 

\appendix
\section{Proof of Technical and Auxiliary Results}
\subsection{Proof of \texorpdfstring{\cref{prop_equi_quad_diff}}{Proposition 2.2.}} \label{app:sec:equi-epi}

\begin{proof}
        We first verify ``(i)\,$\implies$\,(ii)''. By \cite[Exercises 13.18, 13.35, and 13.45]{rockafellar2009variational}, $\envs$ is twice semidifferentiable at $\bar z$ with $\frac{1}{2} (\envs)^{\prime\prime}(\bar z;\cdot) =e_\tau(\frac{1}{2}\rd^2\vp(\bar x|\bar v))(\cdot)$. Since $\frac{1}{2}\rd^2\vp(\bar x|\bar v)$ is generalized quadratic and $h \mapsto \rd^2\vp(\bar x|\bar v)(h)+\rho\|h\|^2$ is convex by \cite[Proposition 13.49]{rockafellar2009variational}, the mapping  $e_\tau(\frac{1}{2}\rd^2\vp(\bar x|\bar v))$ is a quadratic function; cf. \cref{eq:use-in-app}. Using \cite[Example 13.8, Corollary 13.42]{rockafellar2009variational}, this implies that $\envs$ is twice differentiable at $\bar z$.
        
We continue with ``(ii)\,$\implies$\,(i)''. As before, \cite[Exercise 13.45]{rockafellar2009variational} implies that $\vp$ is properly twice epi-differentiable at $\bar x$ for $\bar v$ and $\frac{1}{2} (\envs)^{\prime\prime}(\bar z;\cdot) =e_\tau(\frac{1}{2}\rd^2\vp(\bar x|\bar v))(\cdot)$. By \cite[Lemma 2.1]{planiden2018epi}, we have
        $e_\tau(\tfrac{1}{2}\rd^2\vp(\bar x|\bar v))(h)=\|h\|^2/\tau-g^*_{\tau}({h}/{\tau})$,                    
        where $g_\tau(h)=\frac{1}{2}\rd^2\vp(\bar x|\bar v)(h)+\frac{1}{2\tau}\|h\|^2$. By assumption, $e_\tau(\frac{1}{2}\rd^2\vp(\bar x|\bar v))$ is a quadratic function and hence, $g^*_\tau$ has to be quadratic as well. Moreover, by \cite[Proposition 13.49]{rockafellar2009variational}, $g_\tau$ is a convex, proper, lsc mapping. Hence, applying \cite[Theorem 13.32]{bauschke2017convex} and \cite[Proposition 4.13]{planiden2018epi}, $g_\tau^{**}=g_\tau$ is generalized quadratic implying that $\rd^2\vp(\bar x|\bar v)$ is generalized quadratic.

Next, we consider ``(ii)\,$\iff$\,(iii)''. By \cite[Proposition 13.37]{rockafellar2009variational}, we have $\nabla \envs =\frac{1}{\tau}(I-\proxs)$ locally around $\bar z$. Hence, this shows that twice differentiability of $\envs$ at $\bar z$ is equivalent to differentiability of $\proxs$ at $\bar z$. Moreover, using $e_\tau(\frac{1}{2}\rd^2\vp(\bar x|\bar v))(\cdot) =\frac{1}{2}(\envs)^{\prime\prime}(\bar z; \cdot)$, twice differentiability of $\envs$ at $\bar z$ already implies that $e_\tau(\frac{1}{2}\rd^2\vp(\bar x|\bar v))$ is a quadratic function. Hence, (ii) and (iii) are equivalent. \end{proof}

\subsection{Proof of \texorpdfstring{\cref{prop2-5}}{Proposition 2.4}} \label{app:sec:prop-technical}

\begin{proof}
    The implication ``(i)\,\!$\implies$\,\!(ii)'' follows from \cite[Theorem 7.37]{rockafellar2009variational}. (The eventual prox-boundedness of $\{f_k\}_k$ is ensured by assumption). Next, let condition (ii) hold. Let $h$ be a proper, lsc, $\rho$-weakly convex function. Then, by \cite[Corollary 3.4 a)]{hoheiselproximal}, we have $\mathrm{env}_{\tau h}(x) = \frac{1}{2\tau} \|x\|^2 - (h + \frac{1}{2\tau}\|\cdot\|^2)^*(x/\tau)$ for all $x\in\Rn$ and $\tau\rho \in (0,1)$. This expression also holds if $\rho = 0$, $\tau > 0$. Let us further define $h_k := (f_k + \frac{1}{2\tau}\|\cdot\|^2)^*$ and $h := (f + \frac{1}{2\tau}\|\cdot\|^2)^*$. By assumption, we can infer $h_k(x) \to h(x)$ for all $x$. Thus, setting $g_k := f_k + \frac{\rho}{2}\|\cdot\|^2$, $g := f + \frac{\rho}{2}\|\cdot\|^2$, and $\lambda := \frac{\tau}{1-\tau\rho}$, it follows
    \begin{align*}
    \mathrm{env}_{\lambda g_k}(x) &= \frac{1}{2\lambda}\|x\|^2 - \left(f_k + \tfrac{\rho}{2}\|\cdot\|^2 + \tfrac{1}{2\lambda}\|\cdot\|^2\right)^*({x}/{\lambda}) = \frac{1}{2\lambda}\|x\|^2 -h_k({x}/{\lambda}) \\ & \hspace{-4ex} \to  \frac{1}{2\lambda}\|x\|^2 -h({x}/{\lambda}) = \frac{1}{2\lambda}\|x\|^2 - \left(f + \tfrac{\rho}{2}\|\cdot\|^2 + \tfrac{1}{2\lambda}\|\cdot\|^2\right)^*({x}/{\lambda})  = \mathrm{env}_{\lambda g}(x). 
    \end{align*}
    Due to the convexity of $g_k$ and $g$, we can now apply \cite[Theorem 7.37 (b${}^\prime$)]{rockafellar2009variational} which implies $g_k\elimit g$. Finally, by \cite[Exercise 7.8 (a)]{rockafellar2009variational}, this yields $f_k = g_k - \frac{\rho}{2}\|\cdot\|^2 \elimit g - \frac{\rho}{2}\|\cdot\|^2 = f$. 
\end{proof}

\vspace{-3ex}
\bibliographystyle{siamplain}
\bibliography{bib_part1}

\end{document}